\documentclass[11 pt]{article}  
\usepackage[utf8]{inputenc}
\usepackage{amsmath}
\usepackage{amsfonts}
\usepackage{amssymb}
\usepackage{graphicx}
\usepackage{mathrsfs}
\usepackage{upref,amsthm,amsxtra,exscale}
\usepackage{stmaryrd}
\usepackage{cite}
\usepackage[colorlinks=true,urlcolor=blue,
citecolor=red,linkcolor=blue,linktocpage,pdfpagelabels,bookmarksnumbered,bookmarksopen]{hyperref}

\newcommand\inter[1]{\llbracket #1\rrbracket}

\usepackage[cm]{fullpage}

\usepackage{subcaption}
\usepackage{caption}
\usepackage{cleveref}

\newtheorem{theorem}{Theorem}[section]

\newtheorem{remark}[theorem]{Remark}
\newtheorem{lemma}[theorem]{Lemma}
\newtheorem{proposition}[theorem]{Proposition}

\numberwithin{equation}{section}

\usepackage{enumitem}

\def\N{\mathbb{N}}

\def\dist{\operatorname{dist}}

\def\eps{\varepsilon}

\def\tilde{\widetilde}

\newcommand{\RR}{\mathbb{R}}

\newcommand{\weH}[1]{\mathbb H^{#1}(\Omega;\ell)}

\def\R{\mathbb{R}}

\def\mH{\mathbb{H}}
\def\cE{\mathcal{E}}

\def\d{\textnormal{d}}

\def\sideremark#1{\ifvmode\leavevmode\fi\vadjust{\vbox to0pt{\vss
 \hbox to 0pt{\hskip\hsize\hskip1em
 \vbox{\hsize2.1cm\tiny\raggedright\pretolerance10000
  \noindent #1\hfill}\hss}\vbox to15pt{\vfil}\vss}}}%

    \usepackage{color}
\usepackage[dvipsnames]{xcolor}

\usepackage{tikz}
\usepackage{pgfplots}
\usepackage{pgfplotstable}
\usetikzlibrary{positioning}
\usepackage{booktabs}

\title{FEM for 1D-problems involving the logarithmic Laplacian: error estimates and numerical implementation}

\author{V\'ictor Hern\'andez-Santamar\'ia\footnote{The work of V. Hern\'andez-Santamar\'ia is supported by the program ``Estancias Posdoctorales por México para la Formación y Consolidación de las y los Investigadores por México'' of SECIHTI (Mexico). He also received support from Project
A1-S-10457
and
CBF2023-2024-116
of SECIHTI and by UNAM-DGAPA-PAPIIT grants
IN102925, IN117525, IN104922, and IA100324 (Mexico).} \and
 Sven Jarohs
 \and
Alberto Salda\~{n}a\footnote{ A. Saldaña is supported  by  
SECIHTI grant CBF2023-2024-116 (Mexico) and by UNAM-DGAPA-PAPIIT grant IN102925 (Mexico).}\and
Leonard Sinsch
}

\date{}

\begin{document}

\maketitle

\abstract{
We present the numerical analysis of a finite element method (FEM) for one-dimensional Dirichlet problems involving the logarithmic Laplacian (the pseudo-differential operator that appears as a first-order expansion of the fractional Laplacian as the exponent $s\to 0^+$). Our analysis exhibits new phenomena in this setting; in particular, using recently obtained regularity results, we prove rigorous error estimates and provide a logarithmic order of convergence in the energy norm using  suitable \emph{log}-weighted spaces. Moreover, we show that the stiffness matrix of logarithmic problems can be obtained as the derivative of the fractional stiffness matrix evaluated at $s=0$.
Lastly, we investigate the relationship between the discrete eigenvalue problem and its convergence to the continuous one.
}

\bigskip
\bigskip

\noindent\textsc{Keywords:} Finite elements, zero-order kernel, quasi-interpolation, weighted logarithmic norms.
\medskip

\noindent\textsc{MSC2020:}
35S15 · 
65N15 · 
65N30 · 
35B65 ·	
65N25   

\medskip

\section{Introduction}

In this paper we analyze and implement the finite element method (FEM) to one-dimensional nonlocal Dirichlet boundary value problems involving the logarithmic Laplacian such as
\begin{align}\label{P:intro}
    L_\Delta u = f \quad \text{ in }\Omega,\qquad u=0\quad \text{ on }\R\backslash \Omega,
\end{align}
where $\Omega$ is an open interval, $f$ belongs to a suitable (log-Hölder) space, and $L_\Delta$ denotes the logarithmic Laplacian given by
\begin{align}\label{LL}
     L_\Delta u(x)
     =
     \int_{x-1}^{x+1}\frac{u(x)-u(y)}{|x-y|}\, dy
     -\int_{\R\backslash(x-1,x+1)}\frac{u(y)}{|x-y|}\, dy
     +\rho_1\, u(x)\qquad \text{ for }x\in \Omega.
 \end{align}
 Here $\rho_1\approx-1.15$ is an explicit negative constant given in Section \ref{sec:notation}. The logarithmic Laplacian $L_\Delta$ is a pseudo-differential operator with Fourier symbol $2\ln|\xi|$, namely, ${\mathcal F}(L_\Delta \varphi)(\xi)=2\ln|\xi| {\mathcal F}(\varphi)(\xi)$ for all $\varphi\in C^\infty_c(\R),$ where ${\mathcal F}$ denotes the Fourier transform.  This operator appears naturally as a first order expansion of the (integral) fractional Laplacian $(-\Delta)^s$ (the pseudo-differential operator with Fourier symbol $|\xi|^{2s}$) as $s\to 0^+$; in particular, in \cite[Theorem 1.1]{CW19} it is shown that, for all $\varphi\in C^\infty_c(\R)$,
\begin{align}\label{intro:exp}
(-\Delta)^s\varphi = \varphi + sL_\Delta \varphi + o(s)\qquad \text{as $s\to 0^+$ in }L^p(\R) \text{ with }1<p\leq \infty.
\end{align}

 Problems such as \eqref{P:intro} play a key role in the understanding of fractional equations as $s\to 0$. This asymptotic analysis is relevant, both in applications and for theoretical reasons.  In applications, for instance, several phenomena that are modeled with a fractional-type diffusion are optimized in some sense for small values of $s$, see \cite{Caffarelli17} and the discussion and references in \cite{HSS22}.  On the other hand,  the mathematical structures that appear in the limit as $s\to 0^+$ for linear and nonlinear fractional problems are highly nontrivial and interesting; for instance, we refer to \cite{CW19,LW21,FJW22} for the analysis of linear and eigenvalue problems, to \cite{AS22,HSS22} for the analysis of some relevant nonlinear problems, and to \cite{CS22} for the regularity properties of solutions to \eqref{P:intro}, in particular, \cite[Theorem 1.1]{CS22} gives some conditions for the existence of classical solutions to \eqref{P:intro}.  We also mention \cite{JSW20} and \cite{JSW24}, where the operator $L_\Delta$ is used to characterize the derivative of the solution mapping of nonlocal problems involving the fractional Laplacian and to provide a sharp rate of convergence of the nonlocal-to-local transition as $s\to1$. See also \cite{chen2023taylor}, where higher-order expansions of the fractional Laplacian are considered.

The problem \eqref{P:intro} has a variational structure (see \cite{CW19}), which allows the use of powerful functional analysis tools to explore its solvability. Nevertheless, the operator $L_\Delta$ exhibits several pathologies that make its study difficult. Here is a short list of some of them:
\begin{enumerate}
    \item It lacks good scaling properties (see the discussion in \cite{CS22}).  In particular, scalings in this setting produce zero-order terms (see \cite[Appendix A.1]{HSLRS23}).
    \item For every bounded open set $U\subset \R$ there is $\lambda>0$ such that the problem \eqref{P:intro} does not have a solution for $f\equiv 1$ and $\Omega=\lambda U$, see \cite[Remark 5.9]{CS22}.
    \item The operator $L_\Delta$ does not satisfy the maximum principle in general. Nevertheless, positivity preserving properties do hold under additional assumptions on the size of the domain, see \cite[Theorem 1.8 and Corollary 1.9]{CW19}.
    \item Classical solutions of \eqref{P:intro} can be very irregular, in particular, they may not belong to any Hölder space $C^\alpha(\Omega)$ with $\alpha\in(0,1)$, see \cite[Remark 5.6]{CS22}.
    \item No explicit continuous solution of \eqref{P:intro} with $f\in L^\infty(\Omega)$ and $f>0$ in $\Omega$ is known. In particular, the torsion function, namely the solution of \eqref{P:intro} with $f\equiv 1$, is not known to have a closed formula in any interval.
\end{enumerate}

 These obstacles justify the interest and importance of the analysis of the numerical approximation of solutions to \eqref{P:intro}.  Furthermore, we show that new phenomena appear and that new ideas are required to analyze and implement the finite element method in this setting.

In this work, we focus our analysis to dimension one. This simplified setting allows us to present the key ideas and new concepts in a more transparent way and it has several other advantages; for example, in 1D we can provide an \emph{explicit formula} for the stiffness matrix.  This makes the implementation of the FEM very straightforward and fast in just a few lines of code.  Analyzing problems in 1D has been successfully used for illustrating and identifying important qualitative features of fractional elliptic and parabolic problems, see, for instance, \cite{ABBM18,BdPM18,BHS19,CGH20}. We think that our work provides the community with a compelling and simple-to-use tool to understand the particular pathologies of logarithmic problems and to test and deduce new conjectures in this setting.

Although several of the ideas involved in our 1D analysis can be generalized to dimensions 2 and 3, we believe that there are still important open questions in the 1D case that should be solved before attempting a more ambitious analysis in higher-dimensions.

Our main result (see Theorem~\ref{main:thm:intro} below) states that the FEM numerical approximation of the weak solution converges logarithmically; in particular, we show that the norm of the numerical error can be bounded in terms of $|\ln(h)|^{-\alpha}$, where $h$ is the discretization parameter and $\alpha\in(0,1)$.   This contrasts with the convergence rate $h^\frac{1}{2}|\ln h|$ observed for the fractional Laplacian for $s\in(0,\frac{1}{2})$ (see \cite[Theorem 3.31]{Bor17}) and the well-known rate $h$ for the standard Laplacian.

To state our main result in more detail, let us introduce some notation. Let $L>0$, $\Omega:=(0,L),$ $B_1(\Omega):=(-1,L+1)$. Following \cite{CW19}, consider the Hilbert space
\begin{align*}
\mH(\Omega):=\{u\in L^2(\Omega)\::\: \|u\|_{\mathbb H(\Omega)}<\infty \text{ and }u=0\text{ on }\R\backslash \Omega\},
\end{align*}
where $\|u\|^2_{\mathbb H(\Omega)}:={\mathcal E}(u,u)$ and ${\mathcal E}(\cdot,\cdot)$ is a scalar product on $\mathbb H(\Omega)$ given by
\begin{align*}
    {\mathcal E}(u,v)&=\int_{\R}\int_{B_1(x)}\frac{(u(x)-u(y))(v(x)-v(y))}{|x-y|}\, dy\, dx \qquad \text{ for }u,v\in \mH(\Omega).
\end{align*}
It is known that
\begin{align}\label{c:em}
\mathbb H(\Omega)\hookrightarrow L^2(\Omega)\qquad \text{ is compact,}
\end{align}
see \cite[Theorem 2.1]{CdP18}. Note that the space $\mH(\Omega)$ imposes very little regularity restrictions on its elements, and in particular step functions belong to $\mH(\Omega)$. As a consequence, this space does not allow a notion of trace for its elements.

As mentioned before, the problem \eqref{P:intro} has a variational structure. For $f\in L^2(\Omega)$, we say that $u\in \mH(\Omega)$ is a weak solution of \eqref{P:intro} if $    \cE_{L}(u,v)=\int_{\Omega} f(x) v(x)\, dx$ for all $v\in  \mH(\Omega),$ where
\begin{align}\label{cEL:def}
\cE_L(u,v)=\cE(u,v)+B(u,v) 
\end{align}
and
\begin{align}\label{eq:bilinear_B}
    B(u,v)=-\iint_{|x-y|\geq 1}\frac{u(x)\,v(y)}{|x-y|}dx dy+\rho_1\int_{\R}u v dx.
\end{align}
 The bilinear form $\cE_L$ is \emph{not} coercive in general and it does not induce a norm in $\mH(\Omega)$.

The analysis of FEM in nonlocal problems relies strongly on fine regularity properties of weak solutions. This kind of results face important challenges in the logarithmic setting, where this theory is still underdeveloped; however, recently in \cite{CS22} the regularity of weak solutions in log-Hölder spaces has been studied.  Since these results play a fundamental role in our approach (in particular the estimate \eqref{eq:log_problem} below), it is worth to describe them in detail. Let $\ell:(0,\infty)\to (0,1]$ be the modulus of continuity given by
\begin{align}\label{ell:def}
\ell(\rho) := |\ln(\min(\rho_0,\rho))|^{-1}, \qquad\rho_0:=0.1.
\end{align}
Note that $\ell$ is a non-decreasing concave function with $\ell(0) := \lim_{r\to0^+}\ell(r)=0$. Given $\alpha\in(0,1)$, let
\begin{align}
    \mathcal X^\alpha(\Omega)&:=\left\{u:\mathbb R\to \mathbb R:\|u\|_{\mathcal{X}^\alpha(\Omega)}<+\infty \;\textnormal{ and $u=0$ in }\; \R\setminus \Omega\right\}, \label{Xdef}\\
    \mathcal Y(\Omega)&:=\{f:\Omega\to \mathbb R:\|f\|_{\mathcal Y(\Omega)}<+\infty\},\notag
\end{align}
where
\begin{align*}
    \|u\|_{\mathcal X^\alpha(\Omega)}&:=\sup_{
    \genfrac{}{}{0pt}{}{x,y\in\mathbb R}{x\neq y}
    }\frac{|u(x)-u(y)|}{\ell^{\alpha}(|x-y|)}+\sup_{
\genfrac{}{}{0pt}{}{x,y\in\Omega}{x\neq y}
    }\ell^{1+\alpha}(d(x,y))\frac{|u(x)-u(y)|}{\ell^{1+\alpha}(|x-y|)}, \\
    \|f\|_{\mathcal Y(\Omega)}&:=\|f\|_{L^\infty(\Omega)}+\sup_{
    \genfrac{}{}{0pt}{}{x,y\in\Omega}{x\neq y}
    }\ell^{2}(d(x,y))\frac{|f(x)-f(y)|}{\ell(|x-y  |)}, \\
    d(x)&:=\dist(x,\partial \Omega), \quad d(x,y):=\min(d(x),d(y)).
\end{align*}

The next Fredholm-alternative-type result characterizes the existence and regularity of classical solutions of \eqref{P:intro}.

\begin{theorem}[Theorem 1.1 in \cite{CS22}]\label{eq:regularity}
    There is $\alpha=\alpha(\Omega)\in(0,1)$ such that exactly \underline{one} of the following alternatives holds:
    \begin{enumerate}[label=\roman*)]
        \item For every $f\in\mathcal Y(\Omega)$ there exists a unique classical solution $u\in\mathcal X^{\alpha}(\Omega)$ of
        \begin{equation}\label{eq:log_problem}
            L_{\Delta} u = f \;\textnormal{ in }\; \Omega, \quad u=0 \;\textnormal{ on } \R\setminus \Omega.
        \end{equation}
        Moreover,
        \begin{equation}\label{eq:regularity_classical}
            \|u\|_{\mathcal X^{\alpha}(\Omega)}\leq C\|f\|_{\mathcal Y(\Omega)}
        \end{equation}
        for some constant $C>0$ only depending on $\Omega$.
        \item There is a non-trivial classical solution $u\in {\mathcal X}^{\alpha}(\Omega)$ of $L_{\Delta} u=0$.
    \end{enumerate}
\end{theorem}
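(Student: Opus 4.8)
My plan is to obtain the dichotomy from the variational (weak) theory in $\mH(\Omega)$ and then to upgrade weak solutions to classical ones in $\mathcal X^\alpha(\Omega)$ via a regularity bootstrap; the boundary regularity in the logarithmically weighted scale is where the genuinely new difficulties lie.

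First I would set up a Fredholm alternative at the weak level. Since $\mathcal E(u,u)=\|u\|_{\mH(\Omega)}^2$ and, $\Omega$ being bounded, the kernel $|x-y|^{-1}$ restricted to $\{|x-y|\ge 1\}\cap(\Omega\times\Omega)$ is bounded, the form $B$ from \eqref{eq:bilinear_B} extends to a bounded bilinear form on $L^2(\Omega)\times L^2(\Omega)$. Hence for $\lambda>0$ large enough the shifted form $(u,v)\mapsto\cE_L(u,v)+\lambda\int_\Omega uv\,dx$ is coercive on $\mH(\Omega)$, and by Lax--Milgram $L_\Delta+\lambda$ is an isomorphism from $\mH(\Omega)$ onto $\mH(\Omega)^*$. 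Writing $K$ for the composition of $(L_\Delta+\lambda)^{-1}$ with the compact embedding \eqref{c:em}, $K$ is a compact, self-adjoint, nonnegative operator on $L^2(\Omega)$, and $u\in\mH(\Omega)$ is a weak solution of $L_\Delta u=f$ if and only if $(I-\lambda K)u=Kf$. The classical Fredholm alternative for $I-\lambda K$ then yields: either $L_\Delta:\mH(\Omega)\to\mH(\Omega)^*$ is an isomorphism (so \eqref{eq:log_problem} has a unique weak solution for every $f\in L^2(\Omega)$), or $\ker L_\Delta\subset\mH(\Omega)$ is nontrivial; and these two cases are mutually exclusive.

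Next I would run the regularity bootstrap: the claim is that there exists $\alpha=\alpha(\Omega)\in(0,1)$ such that every weak solution $u\in\mH(\Omega)$ of $L_\Delta u=f$ with $f\in\mathcal Y(\Omega)$ in fact belongs to $\mathcal X^\alpha(\Omega)$, is a classical solution, and satisfies $\|u\|_{\mathcal X^\alpha(\Omega)}\le C\,(\|f\|_{\mathcal Y(\Omega)}+\|u\|_{L^2(\Omega)})$. This splits into an interior and a boundary estimate. For the interior estimate one localizes, freezes the problem on small intervals and compares $u$ with solutions of the model problem, tracking the fact that the zero-order (logarithmic) kernel only produces a gain of a log-H\"older modulus $\ell^\alpha$ rather than a genuine H\"older modulus --- this accounts for the first seminorm in $\|\cdot\|_{\mathcal X^\alpha(\Omega)}$. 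For the boundary estimate one constructs explicit barriers encoding the boundary profile of $L_\Delta$ (built from powers of $\ell(d(\cdot))$) and uses a comparison argument on sufficiently small subdomains, where the positivity preserving property holds (see \cite[Theorem 1.8 and Corollary 1.9]{CW19}), to control the blow-up of the difference quotients of $u$ as one approaches $\partial\Omega$; this produces the $\ell^{1+\alpha}(d(x,y))$-weighted seminorm. A covering and scaling argument patches the local bounds into the global estimate with $\alpha$ depending only on $\Omega$. One also checks that $\mathcal X^\alpha(\Omega)\hookrightarrow\mH(\Omega)$ (finite energy follows from using the two seminorms of $\mathcal X^\alpha$ near and away from the boundary), so that a classical solution in $\mathcal X^\alpha(\Omega)$ is automatically a weak solution.

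Finally I would close the argument. In the non-degenerate case the isomorphism gives, for $f\in\mathcal Y(\Omega)\hookrightarrow L^2(\Omega)$, a weak solution $u\in\mH(\Omega)$; the bootstrap makes $u$ a classical solution in $\mathcal X^\alpha(\Omega)$; uniqueness in $\mathcal X^\alpha(\Omega)$ follows from uniqueness of weak solutions together with $\mathcal X^\alpha(\Omega)\subset\mH(\Omega)$; and since $\|u\|_{L^2(\Omega)}\le C\|u\|_{\mH(\Omega)}\le C\|f\|_{L^2(\Omega)}\le C\|f\|_{\mathcal Y(\Omega)}$, the lower-order term is absorbed and \eqref{eq:regularity_classical} follows --- this is alternative (i). In the degenerate case, applying the same bootstrap to a nonzero element of $\ker L_\Delta\subset\mH(\Omega)$ produces a nontrivial classical solution $u\in\mathcal X^\alpha(\Omega)$ of $L_\Delta u=0$ --- alternative (ii); and (i) and (ii) cannot both hold, because such a $u$ would be a nonzero weak solution of the homogeneous problem and would destroy the uniqueness in (i). I expect the main obstacle to be the boundary regularity estimate: identifying the precise boundary behavior of $L_\Delta$, constructing barriers with the correct $\ell$-weights, and making the comparison arguments work in spite of the failure of the global maximum principle for $L_\Delta$ (which forces one to restrict to small subdomains or to exploit the Fredholm structure). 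The interior estimate and the Fredholm packaging are, by comparison, routine.
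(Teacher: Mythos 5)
First, a point of comparison: this paper does not prove \Cref{eq:regularity} at all --- it is imported verbatim as Theorem~1.1 of \cite{CS22}, so there is no in-paper proof to measure your argument against. Judged on its own terms, your proposal has the right architecture, and it is essentially the architecture of \cite{CS22}: a Fredholm alternative at the level of $\mH(\Omega)$ (your Lax--Milgram/compact-resolvent packaging is correct and complete --- $B$ is indeed $L^2$-bounded since the kernel is bounded on $\{|x-y|\ge 1\}$ and $\Omega$ is bounded, so the shifted form is coercive and $K$ is compact and self-adjoint by \eqref{c:em}), followed by a bootstrap upgrading weak solutions to $\mathcal X^\alpha(\Omega)$. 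The closing logic (uniqueness, mutual exclusivity of (i) and (ii), absorption of the $\|u\|_{L^2}$ term via the bounded inverse) is also sound.

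The genuine gap is that the second step --- the a priori estimate $\|u\|_{\mathcal X^\alpha(\Omega)}\le C(\|f\|_{\mathcal Y(\Omega)}+\|u\|_{L^2(\Omega)})$ --- is the entire content of the theorem, and you assert it rather than prove it. Everything you correctly identify as ``where the genuinely new difficulties lie'' (the interior gain of only a log-H\"older modulus from a zero-order kernel, the construction of barriers built from powers of $\ell(d(\cdot))$, the comparison arguments on small subdomains where positivity preservation holds) is precisely the hard analysis of \cite{CS22}, and none of it is carried out; as written, the proof reduces the theorem to itself. A smaller but concrete inaccuracy: the embedding $\mathcal X^\alpha(\Omega)\hookrightarrow\mH(\Omega)$ does not follow from the first (global) seminorm alone, since it only gives $|u(x)-u(y)|^2\lesssim \ell^{2\alpha}(|x-y|)$ and $\int_0^1 \ell^{2\alpha}(\rho)\rho^{-1}\,d\rho$ diverges for $\alpha\le \tfrac12$; one must genuinely invoke the interior seminorm weighted by $\ell^{1+\alpha}(d(x,y))$, exactly as the paper does in \Cref{prop:more_regularity} via \Cref{lem:int:bds}, so this step needs the computation rather than the phrase ``finite energy follows from using the two seminorms.''
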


As a final ingredient to state our main result, we introduce the notation of the 1D FEM.  Let $N\in\mathbb N$, $h=\frac{L}{N+1}$, $x_i:=ih$, and $\Omega_i:=(x_{i},x_{i+1})$ for $i=0,\ldots,N$. Note that $\overline{\Omega}=\cup_{i=0}^N \overline{\Omega_i}$. Consider the discrete space
\begin{equation*}
    \mathcal V_h=\{v\in \mathcal C^0(\Omega): v=0 \textnormal{ in } \R\setminus \Omega, \;\; v|_{\Omega_i}\in \mathcal P_1, \;\;  i=0,\ldots,N\},
\end{equation*}
where $\mathcal P_1$ is the space of polynomials of degree less than or equal to one.

We say that  $u_h\in \mathcal V_h$ is the \emph{finite element approximation} of the weak solution $u$ of \eqref{P:intro} if \begin{equation}\label{eq:discr_problem:intro}
    \cE_{L}(u_h,v)=\int_{\Omega} f(x) v(x)\, dx \qquad \text{ for all } v\in \mathcal V_h.
\end{equation}

 Our main result is the following.
 \begin{theorem}\label{main:thm:intro}
    Let $L>0$, $\Omega:=(0,L)$, and $f\in \mathcal{Y}(\Omega)$.  Assume that alternative $i)$ of \Cref{eq:regularity} holds for some $\alpha\in(0,1)$. Let $u$ be the (unique weak) solution of \eqref{P:intro} and let $u_h$ be its finite element approximation given by \eqref{eq:discr_problem:intro}. Then there are constants $h_0>0$, $C>0$, and $\alpha\in(0,1)$ such that
    \begin{equation*}
        \|u-u_h\|_{\mathbb H(\Omega)}\leq C\ell^{\alpha}(h)\|f\|_{\mathcal Y}\qquad \text{ for all }h\in(0,h_0),
    \end{equation*}
    where $\ell$ is the modulus of continuity given in \eqref{ell:def}.
 \end{theorem}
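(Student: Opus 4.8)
The plan is to follow the classical Céa-type strategy adapted to the non-coercive setting. First I would establish a discrete inf-sup (or G\r{a}rding-plus-compactness) bound for the bilinear form $\cE_L$ on $\mathcal V_h$: since $\cE_L = \cE + B$ where $\cE$ is the coercive inner product of $\mH(\Omega)$ and $B$ is a lower-order (compact) perturbation, and since alternative $i)$ of \Cref{eq:regularity} rules out a nontrivial kernel, a standard Schatz-type argument shows that for $h$ small enough there is $\beta>0$ with
\begin{equation*}
\inf_{0\neq v_h\in\mathcal V_h}\sup_{0\neq w_h\in\mathcal V_h}\frac{\cE_L(v_h,w_h)}{\|v_h\|_{\mH(\Omega)}\|w_h\|_{\mH(\Omega)}}\geq \beta.
\end{equation*}
The key input here is the compact embedding \eqref{c:em} together with $\mathcal V_h\to\mH(\Omega)$ density, which forces the uniform discrete well-posedness away from the (excluded) resonant case. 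This gives a quasi-optimality estimate $\|u-u_h\|_{\mH(\Omega)}\leq C\inf_{v_h\in\mathcal V_h}\|u-v_h\|_{\mH(\Omega)}$.

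Second, I would reduce the problem to a best-approximation estimate: choosing $v_h=I_h u$, a suitable quasi-interpolant (Cl\'ement/Scott--Zhang type, or the nodal interpolant since in 1D $u\in\mathcal X^\alpha(\Omega)\subset C^0$), it remains to bound $\|u-I_h u\|_{\mH(\Omega)}$. This is where the log-regularity from Theorem~\ref{eq:regularity} enters: $u\in\mathcal X^\alpha(\Omega)$ with $\|u\|_{\mathcal X^\alpha(\Omega)}\leq C\|f\|_{\mathcal Y(\Omega)}$, and the modulus of continuity of $u$ is governed by $\ell^\alpha$. The task is then to show that the $\mH(\Omega)$-seminorm of the interpolation error of a function with modulus of continuity $\ell^\alpha$ on a mesh of size $h$ is controlled by $\ell^\alpha(h)$.

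Third — and this is the technical heart — I would estimate $\cE(u-I_hu,u-I_hu)=\int_\R\int_{B_1(x)}\frac{|e(x)-e(y)|^2}{|x-y|}\,dy\,dx$ with $e:=u-I_hu$. On each element $\Omega_i$ the interpolant reproduces the modulus of continuity structure: $|e(x)|\lesssim \omega(h)$ and, more importantly, the first-difference $|e(x)-e(y)|$ is controlled both by $\min(|x-y|,h)\cdot(\text{local Lipschitz bound of } I_hu)$ and by $\omega(|x-y|)+\omega(h)$. Splitting the double integral into $|x-y|\leq h$ and $|x-y|>h$, using near the diagonal the bound $|e(x)-e(y)|\lesssim \ell^\alpha(|x-y|)$ (since both $u$ and $I_h u$ inherit $\ell^\alpha$-continuity up to a constant, using concavity of $\ell$), and away from the diagonal $|e(x)-e(y)|\lesssim \ell^\alpha(h)$, one is left with integrals of the type $\int_0^h \frac{\ell^{2\alpha}(r)}{r}\,dr$ and $\ell^{2\alpha}(h)\int_h^1\frac{dr}{r}$. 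Since $\int \frac{\ell^{2\alpha}(r)}{r}\,dr$ relates to $\frac{d}{dr}\ell^{1-2\alpha}$ type primitives (because $\ell(r)=|\ln r|^{-1}$ near zero, so $\frac{\ell^{2\alpha}(r)}{r}=\frac{1}{r|\ln r|^{2\alpha}}$, whose antiderivative behaves like $|\ln r|^{1-2\alpha}$ when $\alpha<\tfrac12$, or $\ln|\ln r|$ when $\alpha=\tfrac12$), a careful bookkeeping shows the whole expression is bounded by $C\ell^{2\alpha}(h)$ up to possibly adjusting $\alpha$ downward — which is harmless since the theorem already allows any $\alpha\in(0,1)$ coming from Theorem~\ref{eq:regularity}. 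One also must check the contribution of the second ("tail") term in the $\mathcal X^\alpha$-norm, which carries the boundary-weight $\ell^{1+\alpha}(d(x,y))$; near $\partial\Omega$ this is where the weighted estimate is genuinely needed, and the boundary layer must be handled by a dyadic decomposition in $d(x)$, absorbing the singular weight against the smallness of the region.

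The main obstacle I anticipate is precisely this last point: controlling the interpolation error in the $\mH(\Omega)$-seminorm uniformly up to the boundary, where $u$ is only log-H\"older with a degenerate weight. Unlike the fractional case where one has fractional Sobolev regularity and Hardy-type inequalities, here the interpolation estimate must be done "by hand" using the explicit modulus $\ell$, and the bound $\ell^\alpha(h)$ (rather than a power of $h$) is dictated by the slow decay of $\ell$; getting the exponent $\alpha$ to survive the diagonal integration, and making sure the boundary contributions do not worsen it, will require the careful dyadic splitting sketched above. A secondary subtlety is verifying that the discrete inf-sup constant $\beta$ is genuinely independent of $h$, which hinges on a compactness argument that must be made rigorous in the $\mH(\Omega)$ topology rather than an $H^s$-type one.
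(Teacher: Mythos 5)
Your high-level architecture (discrete inf--sup via a compactness/Schatz-type contradiction argument, quasi-optimality, reduction to an interpolation error estimate, then the regularity bound $\|u\|_{\mathcal X^\alpha}\leq C\|f\|_{\mathcal Y}$) is exactly the paper's. The genuine gap is in what you call the technical heart. You propose to bound the near-diagonal part of $\cE(e,e)$, $e=u-I_hu$, by the pointwise estimate $|e(x)-e(y)|\lesssim \ell^{\alpha}(|x-y|)$ and then integrate, arriving at $\int_0^h \frac{\ell^{2\alpha}(r)}{r}\,dr$. But this integral equals $\int_{|\ln h|}^{\infty} t^{-2\alpha}\,dt$ after the substitution $t=|\ln r|$, which \emph{diverges} whenever $2\alpha\leq 1$ --- and Theorem~\ref{eq:regularity} only provides \emph{some} $\alpha\in(0,1)$, possibly small. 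Your proposed remedy of ``adjusting $\alpha$ downward'' goes in the wrong direction: decreasing $\alpha$ makes the singularity worse, not better. So the global modulus $\ell^{\alpha}$ alone (the first term of the $\mathcal X^{\alpha}$-norm in \eqref{Xdef}) cannot close the near-diagonal estimate; you correctly flag the second, interior-weighted term of the $\mathcal X^{\alpha}$-norm, but you relegate it to a boundary-layer correction, whereas it is in fact the indispensable ingredient \emph{throughout} $\Omega$: only the interior bound $|u(x)-u(y)|\lesssim \ell^{1+\alpha}(|x-y|)\,\ell^{-1-\alpha}(d(x,y))$, with exponent $1+\alpha>1$ on the modulus, makes the diagonal singularity integrable (cf.\ Lemma~\ref{lem:over}).

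The paper's route around this is structurally different at this point and worth internalizing: it never integrates the pointwise modulus directly. Instead it introduces the weighted space $\mathbb H^{1+\alpha}(\Omega;\ell)$ of \eqref{H:weights:def}, proves once and for all that $\|u\|_{\mathbb H^{1+\alpha}(\Omega;\ell)}\leq C\|u\|_{\mathcal X^{\alpha}(\Omega)}$ (Proposition~\ref{prop:more_regularity}, which is exactly where the interior $\ell^{1+\alpha}$ estimate is consumed), and then extracts the smallness factor algebraically: on a patch where $|x-y|\lesssim h$ one writes $\frac{1}{|x-y|}=\frac{\ell^{\alpha}(|x-y|)}{|x-y|\,\ell^{\alpha}(|x-y|)}\leq \frac{C\ell^{\alpha}(h)}{|x-y|\,\ell^{\alpha}(|x-y|)}$ using monotonicity of $\ell$, so the local interpolation error is bounded by $\ell^{\alpha}(h)$ times a piece of the (finite) weighted norm (Proposition~\ref{prop:est_interpolator}), with the endpoint elements handled separately via the weighted $L^2$ term. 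Your sketch as written would not compile into a proof for small $\alpha$; to repair it you would essentially have to reproduce this weighted-norm mechanism, or restrict to $\alpha>1/2$, which the hypotheses do not permit.
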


 For the proof of Theorem~\ref{main:thm:intro}, we extend the general strategy implemented in \cite{Bor17}, where the FEM is analyzed and used to approximate solutions of the fractional Laplacian.  The overall scheme is as follows: If $u$ is a weak solution that we wish to approximate with finite elements, one defines first an \emph{interpolator} denoted by $I_h u$.  Then, via a localization of the norms (a step needed due to the nonlocal nature of the operator, see Lemma~\ref{lem:localization_enorm}) one can bound the global interpolation error $\|u-I_h u\|_{\mathbb H(\Omega)}$ (see Proposition~\ref{eq:estab_H_c_infty}) in terms of a Hilbert space with logarithmic weights (see \eqref{H:weights:def}).  Then, with a Cea's-Lemma-type result (Lemma~\ref{lem:cea_mg}), we can bound the norm $\|u-u_h\|_{\mathbb H(\Omega)}$ in terms of $\|u-I_h u\|_{\mathbb H(\Omega)}$.  Finally, the regularity estimate \eqref{eq:regularity_classical} comes into play to obtain the final estimate stated in Theorem~\ref{main:thm:intro}.

 However, since the regularity theory for $L_\Delta$ is much less developed than that of $(-\Delta)^s$, key elements in the proofs in \cite{Bor17} have to be changed to fit into the logarithmic Laplacian setting.  For instance, the idea of \emph{trace} plays a key role in order to define the Scott-Zhang interpolator used in \cite{Bor17}. This trace is known to exist in the fractional case because, under suitable assumptions on the right-hand side $f$, regularity estimates for the fractional Laplacian imply that the corresponding weak solutions always belong to $H^{\frac{1}{2}+\eps}(\Omega)$ for some $\eps>0$ (see \cite[Theorem 2.3.6]{Bor17}), which is a Hilbert space with trace.  There is no such result for the logarithmic Laplacian $L_\Delta$.  Although Theorem~\ref{eq:regularity} implies that weak solutions are log-Hölder continuous, it is not known if these solutions belong to a suitable Hilbert space with trace.

 As a consequence, we use an interpolator which does not need the notion of trace (see \eqref{eq:interpolator}) but requires two discontinuous base elements $\varphi_0$ and $\varphi_{N+1}$ at the endpoints of the interval. This interpolator satisfies the constant-preservation property \eqref{Ihc}, localization bounds (Lemma \ref{lem:loc_norm_interp}), and stability in \(L^{\infty}\)-sense (Proposition \ref{eq:estab_H_c_infty}), which are the properties that we need.

 The algorithmic implementation of the FEM can be done, as usual, by computing the entries of the corresponding stiffness matrix numerically. However, here we present a new alternative, which we believe that also gives a new perspective on how to understand the expansion \eqref{intro:exp} from the FEM point of view. Let us be more precise. Typically, in order to obtain the finite element approximation $u_h$ given by \eqref{eq:discr_problem:intro}, a finite element basis $(\varphi_i)_{i=0}^{N+1}$ of $\mathcal V_h$ is considered (see \eqref{eq:def_basis}-\eqref{eq:def_basis_ext}) and a matrix with entries
 \begin{align}\label{s:m:intro}
 \mathcal A_{h}^{L}=(\cE_L(\varphi_i,\varphi_j))_{i,j=0}^N
 \end{align}
 is calculated.  The matrix $\mathcal A_{h}^{L}$ is called the stiffness matrix for the logarithmic Laplacian.  Then, solving the linear algebraic problem $\mathcal A_{h}^{L}\alpha = F,$  where $F=(f_i)_{i=0}^{N+1}$ and $f_i=\int_{\Omega}f(x)\varphi_i(x)\, dx$, one finds a vector $\alpha=(\alpha_i)_{i=0}^{N+1}$ and then $u_h(x):=\sum_{i=0}^{N+1}\alpha_i \varphi_i(x)$  is the finite element approximation and satisfies \eqref{eq:discr_problem:intro}. In this paper, instead of calculating \eqref{s:m:intro} (either directly or numerically), we use the stiffness matrix $\mathcal A_{h}^{s}$ for the fractional Laplacian $(-\Delta)^s$ and show that
 \begin{align}\label{Ahds}
 \mathcal A_{h}^{L}=\partial_s \mathcal A_{h}^{s}|_{s=0},
 \end{align}
 see Lemma~\ref{lem:derivative:s:m}.  This is possible, since a closed formula for the stiffness matrix $\mathcal A_{h}^{s}$ has been obtained in \cite{BH17}\footnote{This reference is an extended preprint version of the published article \cite{BHS19}.}.  Formula \eqref{Ahds} can be seen as a discrete version of the expansion \eqref{intro:exp} in terms of the stiffness matrix.
 
 Because the discontinuous base elements $\varphi_0$ and $\varphi_{N+1}$ were not considered in \cite{BH17}, one needs to calculate only the corresponding entries.  We include a closed formula for $\mathcal A_{h}^{L}$ in Section~\ref{sec:stiffness}.  With this, the code implementation of the FEM for $L_\Delta$ is greatly simplified. We emphasize that the matrix $\mathcal A_{h}^{L}$ only depends on $h$ in the sense that, for a fixed value of $h$, any interval of length $L$ splitted into $N+1=\frac{L}{h}$ subintervals has the same formula for $\mathcal A_{h}^{L}$. As mentioned before, another way to obtain the stiffness matrix $\mathcal A_{h}^{L}$ is to compute the entries \eqref{s:m:intro} either directly or numerically. Both approaches yield the same results.

Using these ideas, we also investigate the (Dirichlet) eigenvalues of the logarithmic Laplacian $L_{\Delta}$ in an interval $(-L,L)$ for some $L>0$. As shown in \cite[Theorem 1.4]{CW19}, the eigenvalues of $L_{\Delta}$ in a bounded domain are a sequence such that $\lambda_1<\lambda_2\leq \ldots\leq\lambda_n\to \infty$ for $n\to\infty$.  Moreover, by using scalings (see \cite[Lemma 2.5]{LW21}) it holds that if $\lambda_k(\Omega)$ denotes the $k$-th eigenvalue of $L_{\Delta}$ in a bounded interval $\Omega$, then $\lambda_k(R\Omega)=\lambda_k(\Omega)-2\ln(R)$ for $R>0$. In Section~\ref{eigenvalue approximation:sec} we first show that the eigenvalues of the stiffness matrix defined in \eqref{s:m:intro} converge to the eigenvalues of the logarithmic Laplacian as the discretization parameter $N$ tends to infinity, see Proposition~\ref{prop:eigen-approx} below. Knowing from the scaling properties that, by sending $L$ to infinity, there are $L_i>0$ such that $\lambda_i=0$ in $(-L_i,L_i)$  for $i\in \N$ (see Figure~\ref{L vs condition}), it is enough to estimate the cases in which $0$ is an eigenvalue of $\mathcal{A}^L_h$. Then, we use scalings to approximate the eigenvalues in a general interval. To be precise, we have the following result.
\begin{proposition}\label{eigenvalue approximation}
For $L>0$ the eigenvalues of $L_{\Delta}$ in $(-L,L)$ are given by $\lambda_k=2\ln(L_k/L)$, $k\in \N$, where, for each $k\in \N$, the value $L_k>0$ is such that zero is an eigenvalue of $L_{\Delta}$ in $(-L_k,L_k)$. Moreover, for $k\in\{1,\ldots,6\}$, $L_k$ is given approximately by
\begin{equation*}
L_1\approx 0.7090, \quad L_2\approx 2.3787,\quad L_3\approx 3.9110,\quad L_4\approx 5.5077, \quad L_5\approx 7.0608,\quad L_6\approx 8.6461.
\end{equation*}
\end{proposition}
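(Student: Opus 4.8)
The plan is to separate the exact identity $\lambda_k=2\ln(L_k/L)$, which follows purely analytically from the scaling law, from the numerical determination of $L_1,\dots,L_6$, which relies on the discrete eigenvalue approximation of \Cref{prop:eigen-approx}.

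First I would fix the reference interval $\Omega_0:=(-1,1)$ and write $c_k:=\lambda_k(\Omega_0)$, $k\in\N$; these are well defined with $c_1<c_2\leq\cdots\to\infty$ by \cite[Theorem 1.4]{CW19}. Applying the scaling relation $\lambda_k(R\Omega)=\lambda_k(\Omega)-2\ln R$ of \cite[Lemma 2.5]{LW21} with $\Omega=\Omega_0$, $R=L$ gives
\[
\lambda_k((-L,L))=c_k-2\ln L\qquad\text{for all }L>0 .
\]
As a function of $L$ this is continuous, strictly decreasing and onto $\R$, so it has a unique zero; I would then \emph{define} $L_k:=e^{c_k/2}$, for which $\lambda_k((-L_k,L_k))=0$, i.e. zero is the $k$-th eigenvalue of $L_\Delta$ in $(-L_k,L_k)$, and $L_k$ is the unique length with this property. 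Substituting $c_k=2\ln L_k$ into the displayed identity yields $\lambda_k((-L,L))=2\ln L_k-2\ln L=2\ln(L_k/L)$, which is the first assertion.

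For the constants, by the above it is equivalent to compute the eigenvalues of $L_\Delta$ on the fixed interval $(-1,1)$ and exponentiate, or to locate the length at which $0$ crosses the $k$-th eigenvalue; I would do the latter, as announced in the introduction. For $N\in\N$ I would assemble the symmetric stiffness matrix $\mathcal A_{h}^{L}=(\cE_L(\varphi_i,\varphi_j))_{i,j}$ on $(-L,L)$ using the closed formula of \Cref{sec:stiffness} (equivalently \eqref{Ahds}); its entries depend continuously on the mesh size, hence on $L$, so the ordered eigenvalues $\lambda_k^{(N)}((-L,L))$ are continuous in $L$, and by \Cref{prop:eigen-approx} they converge to $\lambda_k((-L,L))=c_k-2\ln L$ as $N\to\infty$. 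A bisection (or secant) search applied to $L\mapsto\lambda_k^{(N)}((-L,L))$ then yields an approximate critical length $L_k^{(N)}$. To conclude $L_k^{(N)}\to L_k$ I would argue by persistence of sign changes: since the limit curve is strictly decreasing with a simple zero at $L_k$, for small $\eps>0$ we have $\lambda_k((-(L_k-\eps),L_k-\eps))>0>\lambda_k((-(L_k+\eps),L_k+\eps))$, the same strict inequalities hold for $\lambda_k^{(N)}$ once $N$ is large, and continuity in $L$ forces a discrete zero in $(L_k-\eps,L_k+\eps)$. Running this for $k=1,\dots,6$, with a mild extrapolation in $N$ to offset the (logarithmically) slow convergence consistent with \Cref{main:thm:intro}, and cross-checking the first computation route, should produce the listed approximations $L_1\approx0.7090,\dots,L_6\approx8.6461$.

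The analytic part is routine once the scaling law is available. The main obstacle will be turning the fixed-domain convergence of \Cref{prop:eigen-approx} into convergence of the \emph{crossing length}: the strict monotonicity and simplicity of the zero from the second paragraph make the sign-change argument go through, but one must rule out spurious oscillations of the discrete $k$-th eigenvalue near $L_k$, which is exactly what that simplicity controls. The second, practical difficulty is that the convergence is only of logarithmic order, so certifying the displayed digits rigorously would require a posteriori or interval-arithmetic bounds; accordingly the $L_k$ are reported as numerical approximations, supported by \Cref{prop:eigen-approx} and by the agreement of the two routes.
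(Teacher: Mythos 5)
Your proposal is correct and follows essentially the same route as the paper: the exact identity $\lambda_k=2\ln(L_k/L)$ is obtained from the scaling law $\lambda_k(R\Omega)=\lambda_k(\Omega)-2\ln R$ exactly as in Proposition~\ref{eigenvalue approximation a}, and the constants $L_1,\dots,L_6$ are produced numerically with the convergence of the discrete spectrum (Proposition~\ref{prop:eigen-approx}) as the theoretical backing. The only substantive difference is in how the critical lengths are located: you track the sign change of the discrete $k$-th eigenvalue $L\mapsto\lambda_{k,h}((-L,L))$ and justify convergence of the crossing length by persistence of sign changes, whereas the paper detects the same lengths as blow-up points of the condition number $\mathrm{cond}(\mathcal A^L_h)$ found by a golden-section search -- two equivalent detections of $0\in\sigma(\mathcal A^L_h)$, with your sign-change argument actually supplying a convergence justification for $L_k^{(N)}\to L_k$ that the paper leaves implicit.
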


In view of Proposition~\ref{eigenvalue approximation}, the first six eigenvalues of $L_{\Delta}$ in $(-1,1)$ are given approximately by
\begin{equation*}
\lambda_1\approx -0.6878, \quad \lambda_2\approx 1.7331,\quad \lambda_3\approx 2.7275,\quad \lambda_4\approx 3.4122, \quad \lambda_5\approx 3.9091,\quad \lambda_6\approx 4.3142.
\end{equation*}
This can be seen in relation to the numerical approximation of the eigenvalues $\lambda_{k}^s$, $k\in\N$, $s\in(0,1)$ of the fractional Laplacian $(-\Delta)^s$ in $(-1,1)$ given in \cite{K12} in combination with the fact that  $\lambda_k^s=1+s\lambda_k+o(s)$ as $s\to 0^+$ for any $k\in\N$, see \cite{FJW22}. Indeed, from \cite[Table 1]{K12} (using only the numerical approximations) we have the following.
\begin{center}
\begin{tabular}{c|cc|cc|cc}
$s$ & $\lambda_1^s$ &$1+s\lambda_1$  & $\lambda_2^s$ &$1+s\lambda_2$ & $\lambda_3^s$ &$1+s\lambda_3$\\
\hline
0.005 & 0.997 & 0.996 & 1.009 & 1.008 & 1.014 & 1.019\\
0.05  & 0.973 & 0.965 & 1.092 & 1.086 & 1.148 & 1.195\\
0.1   & 0.957 & 0.931 & 1.197 & 1.173 & 1.320 & 1.391
\end{tabular}
\end{center}

To close this introduction, we discuss some numerical approximations.  First, we analyze the torsion function, namely, the solution of the equation
\begin{align}\label{t:p:intro}
 L_\Delta \tau = 1\quad \text{ in }(-L,L),\qquad \tau=0\quad \text{ on }\R\backslash (-L,L)
\end{align}
for $L>0$.  In contrast to the case of the fractional Laplacian, the solution of \eqref{t:p:intro} does not have a closed formula. In fact, the numerical approximations show that the shape of the torsion functions changes drastically as the length of the domain $L>0$ increases, see Figure~\ref{Fig:t:f}. In particular, for $L=0.1$ the torsion function is positive in $(-L,L)$.  This is consistent with the known fact that the maximum principle for the logarithmic Laplacian only holds in small domains, see \cite[Corollary 1.9]{CW19}. For $L=1$ we observe that the maximum principle does not hold anymore and the solution is \emph{negative} in $(-L,L)$ (which is sometimes referred to as an antimaximum principle) and for $L=8$ the torsion function exhibits oscillations and changes sign. These last two phenomena have not been described theoretically so far.

\begin{figure}[htb]
	\centering
	\subfloat[$L=0.1$]{
	\includegraphics{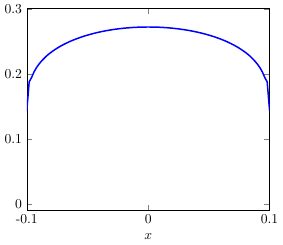}
	}
	\subfloat[$L=1$]{
	\includegraphics{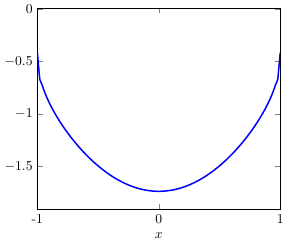}
	} 
	\subfloat[$L=8$]{
	\includegraphics{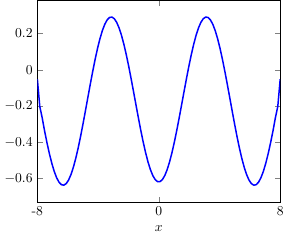}
	}
	\caption{Numerical approximation of the solution to \eqref{t:p:intro} for different values of $L$.}
	\label{Fig:t:f}
\end{figure}

We have also tested our algorithm with some explicit solutions. We refer to Section~\ref{sec:numerics} for more approximations and a discussion on the optimality of the logarithmic rate given in Theorem~\ref{main:thm:intro}.

The paper is organized as follows. In \Cref{sec:three}, we present the localization of the $\mathbb{H}$-norm and some useful approximation properties and results. In Section~\ref{sec:error} we give the proof of Theorem~\ref{main:thm:intro}. We calculate explicitly the stiffness matrix for the logarithmic Laplacian in Section~\ref{sec:stiffness} and we discuss the optimality of the convergence rate in Section~\ref{sec:numerics}. Finally, in Section~\ref{eigenvalue approximation:sec}, we give the proof of Proposition~\ref{eigenvalue approximation} and show that the eigenvalues of the stiffness matrix converge to the eigenvalues of $L_{\Delta}$ as the discretization parameter tends to zero, see Proposition~\ref{prop:eigen-approx}.

\section{Notation and framework} \label{sec:notation}

Let $\rho_1:=2\ln 2 + \psi(\tfrac{1}{2})-\gamma\approx -1.15,$ where $\gamma=-\Gamma'(1)$ is the Euler-Mascheroni constant, and $\psi=\frac{\Gamma'}{\Gamma}$ is the Digamma function.  We use $\|\cdot\|_{L^p(\Omega)}$ to denote the usual $L^p(\Omega)$ norm for $p\in[1,\infty]$.

\subsection{Log-Hölder moduli of continuity}

The modulus $\ell$ (defined in \eqref{ell:def}) satisfies the following property.

\begin{lemma}[Semi-homogeneity]\label{prop1}
There is $c>0$ such that $\ell(\lambda)\leq c\frac{\ell(\lambda r)}{\ell(r)}$ for all $r,\lambda >0.$
\end{lemma}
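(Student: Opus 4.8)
The plan is to reduce the claimed inequality $\ell(\lambda)\le c\,\ell(\lambda r)/\ell(r)$ to an elementary statement about logarithms by unwinding the definition \eqref{ell:def}. Recall $\ell(\rho)=|\ln(\min(\rho_0,\rho))|^{-1}$ with $\rho_0=0.1$, so that $\ell$ is bounded above by $\ell(\rho_0)=|\ln\rho_0|^{-1}$ and is constant for $\rho\ge\rho_0$. The first step I would take is to handle the ``easy'' regimes separately: if $\lambda\ge\rho_0$, then $\ell(\lambda)=\ell(\rho_0)$ is just a fixed constant, while $\ell(\lambda r)/\ell(r)$ is bounded below by a positive constant depending only on $\rho_0$ (using that $\ell$ takes values in $(0,\ell(\rho_0)]$ and, if $r$ is small, that $\ell(\lambda r)\ge \ell(\rho_0 r)$ grows like $|\ln r|^{-1}$ relative to $\ell(r)=|\ln r|^{-1}$ up to a bounded factor). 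So the substance is the regime $\lambda<\rho_0$, and after a further case split one may also assume $r$ and $\lambda r$ are both below $\rho_0$ — the boundary cases where one of them exceeds $\rho_0$ only make $\ell$ larger in the denominator position or saturate it, which helps.

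In the main regime all three quantities are $<\rho_0$, so writing $a=|\ln\lambda|>|\ln\rho_0|>0$, $b=|\ln r|$, $\ell(r)=1/b$, and $\ell(\lambda r)=1/|\ln(\lambda r)|$, the inequality becomes
\begin{align*}
\frac{1}{a}\;\le\; c\,\frac{b}{|\ln(\lambda r)|}\,,\qquad\text{i.e.}\qquad |\ln(\lambda r)|\;\le\; c\,a\,b .
\end{align*}
Since $|\ln(\lambda r)|\le |\ln\lambda|+|\ln r| = a+b$, it suffices to show $a+b\le c\,ab$, which holds with, say, $c=1/|\ln\rho_0|$ because $a,b\ge |\ln\rho_0|$ forces $a\le (a/|\ln\rho_0|)\cdot b$ wait — more carefully: $a+b \le ab\big(\tfrac1b+\tfrac1a\big)\le ab\cdot \tfrac{2}{|\ln\rho_0|}$. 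Thus $c=2/|\ln\rho_0|$ works in this regime. I would then collect the constants from the finitely many boundary cases and take $c$ to be their maximum.

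The main obstacle is not any single estimate but the bookkeeping of the case analysis: one must be careful that in each of the regimes (any of $\lambda$, $r$, $\lambda r$ above or below $\rho_0$, and $\lambda$ possibly $\ge 1$ or $<1$) the relevant quantity $|\ln(\min(\rho_0,\cdot))|$ is correctly replaced, and that no denominator $\ell(r)$ or $\ell(\lambda r)$ is mishandled when it saturates at $\ell(\rho_0)$. A clean way to organize this is to first prove the two-sided bound $\frac{1}{|\ln\rho_0|+|\ln t|}\le \ell(t)\le \frac{1}{|\ln\rho_0|}$ for all $t>0$ — valid uniformly — and then everything reduces to the single logarithmic inequality $|\ln\rho_0|+|\ln(\lambda r)| \le C\big(|\ln\rho_0|+|\ln\lambda|\big)\big(|\ln\rho_0|+|\ln r|\big)$, which follows from $|\ln(\lambda r)|\le|\ln\lambda|+|\ln r|$ and the fact that each factor on the right is $\ge|\ln\rho_0|$, exactly as above. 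This packaging avoids the proliferation of subcases entirely.
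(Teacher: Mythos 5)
The paper does not prove this lemma itself; it simply cites \cite[Lemma 3.2]{CS22}, so there is no in-paper argument to compare against. Judged on its own terms, the heart of your proposal --- the case analysis, with the main regime $\lambda,r,\lambda r<\rho_0$ reduced to $|\ln(\lambda r)|\le c\,ab$ via $|\ln(\lambda r)|\le a+b=ab\bigl(\tfrac1a+\tfrac1b\bigr)\le \tfrac{2}{|\ln\rho_0|}ab$ --- is correct, and the remaining regimes can indeed be closed by monotonicity of $\ell$ and the saturation $\ell\le\ell(\rho_0)$ (e.g.\ for $\lambda<\rho_0\le r$ one uses $\ell(\lambda r)\ge\ell(\lambda\rho_0)=(|\ln\lambda|+|\ln\rho_0|)^{-1}\ge\tfrac12\ell(\lambda)$). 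One small sign issue: when $r\ge\rho_0$, the denominator $\ell(r)$ is at its \emph{maximum}, which makes the right-hand side $c\,\ell(\lambda r)/\ell(r)$ \emph{smaller}, so this case does not ``help''; it is simply handled by the bound just quoted.

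The genuine gap is in your final ``clean packaging.'' The reduction to the single inequality $|\ln\rho_0|+|\ln(\lambda r)|\le C\bigl(|\ln\rho_0|+|\ln\lambda|\bigr)\bigl(|\ln\rho_0|+|\ln r|\bigr)$ requires the \emph{upper} bound $\ell(t)\le C\bigl(|\ln\rho_0|+|\ln t|\bigr)^{-1}$ for the factors $\ell(\lambda)$, $\ell(r)$, not the lower bound you proved. That upper bound is false for large $t$: for $t\ge\rho_0$ one has $\ell(t)\equiv|\ln\rho_0|^{-1}$ while $(|\ln\rho_0|+|\ln t|)^{-1}\to0$ as $t\to\infty$, so $\ell$ is \emph{not} two-sidedly comparable to $(|\ln\rho_0|+|\ln t|)^{-1}$. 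Concretely, with only the bounds you state, the best one gets is $\ell(\lambda)\ell(r)/\ell(\lambda r)\le|\ln\rho_0|^{-2}\bigl(|\ln\rho_0|+|\ln(\lambda r)|\bigr)$, which is unbounded. The packaging therefore does not ``avoid the proliferation of subcases'': you must still separate off the regime $\max(\lambda,r)\ge\rho_0$ (where $\ell$ saturates) and treat it as in your first paragraph. Keeping the case analysis and dropping the final paragraph yields a complete proof.
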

For a proof, see \cite[Lemma 3.2]{CS22}.

\subsection{Variational setting}
Let $L>0$, $\Omega:=(0,L)$, $B_1(\Omega):=(-1,L+1)$. For $u\in \mH(\Omega)$, we have that
\begin{align*}
    \|u\|^2_{\mathbb H(\Omega)}&=\int_{\R}\int_{B_1(x)}\frac{|u(x)-u(y)|^2}{|x-y|}\, dy\, dx
    =\int_{B_1(\Omega)}\int_{B_1(x)}\frac{|u(x)-u(y)|^2}{|x-y|}\, dy\, dx\\
    &=\int_\Omega\int_\Omega \frac{|u(x)-u(y)|^2}{|x-y|}\, dy\, dx+\int_{\Omega}h_\Omega(x)u(x)^2\, dx,
\end{align*}
where $h_\Omega(x):=\left(\int_{B_1(x)\backslash \Omega}|x-y|^{-1}\, dy - \int_{\Omega\backslash B_1(x)}|x-y|^{-1}\, dy\right).$ For any $\beta>0$,
\begin{align}
    \weH{\beta}&:=\left\{ u\in L^2(\Omega): \int_{\Omega}\int_{\Omega}\frac{|u(x)-u(y)|^2}{|x-y|\ell(|x-y|)^{\beta}}dydx<+\infty \quad\textnormal{ and }\quad  \int_{\Omega}|u(x)|^2\ell^{-\beta}(d(x))dx <+\infty\right\},\label{H:weights:def}
\end{align}
where we recall that $d(x)=\operatorname{dist}(x,\partial \Omega)$ and $\ell$ is given in \eqref{ell:def}.  We endow this space with the norm
\begin{align}\label{Hbetanorm}
 \|u\|_{\mathbb H^\beta(\Omega;\ell)}:=\left(
 \int_{\Omega}\int_{\Omega}\frac{|u(x)-u(y)|^2}{|x-y|\ell(|x-y|)^{\beta}}dydx+\int_{\Omega}|u(x)|^2\ell^{-\beta}(d(x))dx
 \right)^\frac{1}{2}.
\end{align}

\begin{proposition}\label{prop:more_regularity}
Assume that alternative $i)$ of \Cref{eq:regularity} holds for some $\alpha\in(0,1)$. Then, $u\in \mathbb H^{1+\alpha}(\Omega;\ell)$. Moreover, there is $C>0$ only depending on $\Omega$ and $\alpha$ such that $\|u\|_{\mathbb H^{1+\alpha}(\Omega;\ell)}\leq C\|u\|_{\mathcal X^{\alpha}(\Omega)}$.
\end{proposition}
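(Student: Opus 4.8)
I would reduce the proposition to the continuous embedding $\mathcal X^{\alpha}(\Omega)\hookrightarrow\mathbb H^{1+\alpha}(\Omega;\ell)$, since the membership $u\in\mathcal X^{\alpha}(\Omega)$ is precisely what alternative $i)$ of \Cref{eq:regularity} provides. So the plan is, for an arbitrary $u\in\mathcal X^{\alpha}(\Omega)$, to bound separately the two contributions to $\|u\|_{\mathbb H^{1+\alpha}(\Omega;\ell)}^2$ in \eqref{Hbetanorm}. For the zero-order term $\int_{\Omega}|u(x)|^2\ell^{-(1+\alpha)}(d(x))\,dx$ I would exploit that $u$ vanishes on $\R\setminus\Omega$: for $x\in\Omega=(0,L)$, choosing the nearest endpoint $x^\ast\in\{0,L\}$ (so that $|x-x^\ast|=d(x)$), the first seminorm of $\mathcal X^{\alpha}(\Omega)$ gives $|u(x)|=|u(x)-u(x^\ast)|\leq\|u\|_{\mathcal X^{\alpha}(\Omega)}\,\ell^{\alpha}(d(x))$, whence $|u(x)|^2\ell^{-(1+\alpha)}(d(x))\leq\|u\|_{\mathcal X^{\alpha}(\Omega)}^2\,\ell^{\alpha-1}(d(x))=\|u\|_{\mathcal X^{\alpha}(\Omega)}^2\,|\ln(\min(\rho_0,d(x)))|^{1-\alpha}$. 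Since $\alpha<1$ and $t\mapsto|\ln t|^{1-\alpha}$ is integrable near $0$ (and $d(x)\to0$ only at the two endpoints), this term is bounded by $C(\Omega,\alpha)\,\|u\|_{\mathcal X^{\alpha}(\Omega)}^2$.

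For the Gagliardo-type term $\iint_{\Omega\times\Omega}\frac{|u(x)-u(y)|^2}{|x-y|\,\ell^{1+\alpha}(|x-y|)}\,dx\,dy$ I would instead invoke the \emph{second} seminorm of $\mathcal X^{\alpha}(\Omega)$, giving $|u(x)-u(y)|^2\leq\|u\|_{\mathcal X^{\alpha}(\Omega)}^2\,\ell^{2(1+\alpha)}(|x-y|)\,\ell^{-2(1+\alpha)}(d(x,y))$, so that the integrand is controlled by $\|u\|_{\mathcal X^{\alpha}(\Omega)}^2\,\ell^{1+\alpha}(|x-y|)\,|x-y|^{-1}\,\ell^{-2(1+\alpha)}(d(x,y))$. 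The two elementary integrability facts I would isolate first are: (a) $\int_0^{L}\ell^{1+\alpha}(r)\,r^{-1}\,dr<\infty$, which holds \emph{precisely because} $1+\alpha>1$ (near $0$ the integrand equals $(r\,|\ln r|^{1+\alpha})^{-1}$), and (b) $\int_0^{L}|\ln(\min(\rho_0,t))|^{p}\,dt<\infty$ for every $p\geq0$. I would then split $\Omega\times\Omega=\mathcal N\cup\mathcal N^c$ with $\mathcal N:=\{(x,y)\in\Omega\times\Omega:|x-y|<\tfrac12\max(d(x),d(y))\}$.

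On $\mathcal N$, the $1$-Lipschitz property of $d$ makes $d(x,y)$ comparable to $\max(d(x),d(y))$ while $|x-y|<d(x,y)$; assuming for instance $d(x)\geq d(y)$ and using the semi-homogeneity \Cref{prop1} to replace $\ell^{-2(1+\alpha)}(d(x,y))$ by a constant multiple of $\ell^{-2(1+\alpha)}(d(x))$ — which is independent of the inner variable — the inner integral is $\lesssim\int_0^{L}\ell^{1+\alpha}(r)\,r^{-1}\,dr<\infty$ by (a), and then $\int_{\Omega}\ell^{-2(1+\alpha)}(d(x))\,dx<\infty$ by (b); the complementary subregion $d(y)\geq d(x)$ follows by exchanging $x$ and $y$. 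On $\mathcal N^c$ one has $|x-y|\geq\tfrac12 d(x,y)$, and using only $\ell^{1+\alpha}(|x-y|)\leq1$ reduces everything to showing $\iint_{\mathcal N^c}\frac{dx\,dy}{|x-y|\,\ell^{2(1+\alpha)}(d(x,y))}<\infty$; since $\Omega$ is an interval, this integrand is bounded away from the corners $(0,0)$ and $(L,L)$ of the square, while near a corner (say $(0,0)$ with $x<y$, so $d(x,y)=x$, and $\mathcal N^c$ forces $x\leq y/2$ and hence $(y-x)^{-1}\leq 2y^{-1}$) a direct two-variable estimate using $\int_0^{\varepsilon}|\ln t|^{2(1+\alpha)}\,dt\lesssim\varepsilon|\ln\varepsilon|^{2(1+\alpha)}$ together with (b) yields finiteness.

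Combining the two bounds gives $\|u\|_{\mathbb H^{1+\alpha}(\Omega;\ell)}\leq C(\Omega,\alpha)\,\|u\|_{\mathcal X^{\alpha}(\Omega)}$, and in particular $u\in\mathbb H^{1+\alpha}(\Omega;\ell)$. The step I expect to be the main obstacle is the analysis on $\mathcal N^c$ near a corner of $\Omega\times\Omega$, where the two degeneracies $|x-y|\to0$ and $d(x,y)\to0$ occur simultaneously and one must use the sharp comparison $|x-y|\gtrsim d(x,y)$ valid there to absorb the singular factor $|x-y|^{-1}$ into the logarithmic integrability (b); a secondary but pervasive technicality is the repeated comparison of $\ell$ at comparable scales (e.g.\ $d(x)$ versus $\tfrac12 d(x)$, or $d(x,y)$ versus $d(x)$), which I would handle systematically through \Cref{prop1}.
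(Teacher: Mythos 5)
Your proposal is correct, and its first half coincides exactly with the paper's: both pointwise estimates are the ones used there (the first seminorm of $\mathcal X^{\alpha}(\Omega)$ together with the identity $|u(x)|/\ell^{\alpha}(d(x))=|u(x)-u(x^\ast)|/\ell^{\alpha}(|x-x^\ast|)$ for the zero-order term, the second seminorm for the Gagliardo term), reducing everything to the finiteness of $\iint_{\Omega\times\Omega}\frac{\ell^{1+\alpha}(|x-y|)}{|x-y|\,\ell^{2+2\alpha}(d(x,y))}\,dy\,dx+\int_{\Omega}\ell^{-1+\alpha}(d(x))\,dx$. Where you genuinely diverge is in proving finiteness of the double integral. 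The paper's \Cref{lem:int:bds} needs no geometric decomposition: since $d(x,y)=\min(d(x),d(y))$ and $\ell$ is non-decreasing, $\ell^{-2-2\alpha}(d(x,y))\leq \ell^{-2-2\alpha}(d(x))+\ell^{-2-2\alpha}(d(y))$, so after symmetrizing in $(x,y)$ the integral factors as $\bigl(\int_0^L \ell^{1+\alpha}(\rho)\rho^{-1}\,d\rho\bigr)\cdot\bigl(\int_\Omega \ell^{-2-2\alpha}(d(x))\,dx\bigr)$, and both factors are finite by \Cref{lem:over} and \Cref{lem:ellbeta} --- precisely your facts (a) and (b). The point is that the inner integral $\int_\Omega \ell^{1+\alpha}(|x-y|)|x-y|^{-1}\,dy$ is uniformly bounded in $x$ with no need to relate $|x-y|$ to $d(x,y)$, so your near/far splitting $\mathcal N\cup\mathcal N^c$, the Lipschitz comparison of $d(x)$ and $d(y)$ on $\mathcal N$, the applications of \Cref{prop1}, and the corner analysis on $\mathcal N^c$ (which you single out as the main obstacle) can all be bypassed. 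Your route does go through --- the estimate near $(0,0)$ is sound --- but note one imprecision: on $\mathcal N^c$ the integrand is \emph{not} bounded away from the corners $(0,0)$ and $(L,L)$; it still blows up logarithmically near the edges of the square and near the corners $(0,L)$, $(L,0)$, where $d(x,y)\to 0$ while $|x-y|$ stays bounded below. It remains integrable there by your fact (b), so the conclusion is unaffected, but the claim as stated should be weakened from ``bounded'' to ``integrable''.
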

\begin{proof}
Fix $\alpha\in(0,1)$ as given by \Cref{eq:regularity}. Let $x\in \Omega$ and let $y\in\partial \Omega$ be such that $d(x)=|x-y|$. Then $u(y)=0$ and
\begin{align}\label{eq:iden_fro}
    \frac{|u(x)|}{\ell^{\alpha}(d(x))}=\frac{|u(x)-u(y)|}{\ell^{\alpha}(|x-y|)}.
\end{align}
From the definition of the spaces ${\mathbb H}^{1+\alpha}(\Omega;\ell)$ (see \eqref{H:weights:def}), $\mathcal X^\alpha(\Omega)$ (see \eqref{Xdef}), and identity \eqref{eq:iden_fro}, we have that
\begin{align*}\notag
    \|u\|^2_{H^{1+\alpha}(\Omega;\ell)}&=\int_{\Omega}\int_{\Omega}\frac{|u(x)-u(y)|^2}{|x-y|\ell^{1+\alpha}(|x-y|)}\, \d{y} \d{x}+\int_{\Omega}|u(x)|^2\ell^{-1-\alpha}(d(x))\,\d{x} \\
    &\leq \|u\|^2_{\mathcal X^{\alpha}(\Omega)}\left(\int_{\Omega}\int_{\Omega}\frac{\ell^{1+\alpha}(|x-y|)}{|x-y|\ell^{2+2\alpha}(d(x,y))}\,\d{y}\d{x}+ \int_{\Omega}\ell^{-1+\alpha}(d(x))\,\d{x}\right). 
\end{align*}
The claim now follows from \eqref{lem:int:bds}.
\end{proof}

\subsection{The discrete space
\texorpdfstring{$\mathcal V_h$}{Vh}
}

Let $N\in \mathbb N$, $h:=\frac{L}{N+1},$ $x_i:=ih,$ and $\Omega_i:=(x_i,x_{i+1})=(ih,(i+1)h)$ for $i\in\inter{0,N}$, where $\inter{a,b}:=[a,b]\cap\mathbb{N}$ for $a<b$. We consider the discrete space
\begin{equation}\label{Vh:def}
    \mathcal V_h=\{v\in \mathcal C^0(\R): v=0 \textnormal{ in } \R\setminus \Omega, \;\; v|_{\Omega_i}\in \mathcal P_1, \;\;  i\in\inter{0,N}\}
\end{equation}
where $\mathcal P_1$ is the space of polynomials of degree less than or equal to one. 

Let $(\varphi_i)_{i\in\inter{1,N}}$ be the basis of shape functions $\varphi_i$ with compact support in each interval $[x_{i-1},x_{i+1}]$ given by
\begin{align}\label{eq:def_basis}
 \varphi_i(x)=\begin{cases}
        \displaystyle \frac{x-x_{i-1}}{h}, &x\in[x_{i-1},x_i], \\
        \displaystyle \frac{x_{i+1}-x}{h}, &x\in [x_i,x_{i+1}], \\
        0, &\textnormal{elsewhere}.
    \end{cases}
\end{align}
Note that $\varphi_i(x_j)=\delta_{ji}$.  As explained in the introduction, we also consider
\begin{align}\label{eq:def_basis_ext}
    \varphi_0(x)=\begin{cases}
        \displaystyle \frac{x_{1}-x}{h}, &x\in [x_0,x_{1}], \\
        0, &\textnormal{elsewhere},
    \end{cases} \qquad \varphi_{N+1}(x)=\begin{cases}
        \displaystyle \frac{x-x_{N}}{h}, &x\in[x_{N},x_{N+1}], \\
        0, &\textnormal{elsewhere.}
    \end{cases}
\end{align}

For any $R>0$, we also define $B_{R}(\Omega):=\{x\in\R:\dist(x,\Omega)<R\}=(-R,L+R)$.

We also introduce the following notation, for $i\in \inter{0,N}$,
\begin{align}\label{eq:def_Si}
S_{i}:=((i-1)h,(i+2)h)\cap (0,L).
\end{align}
 In particular,
\begin{align*}
    S_i=\bigcup_
    {
    {\substack{j\in\inter{0,N},\\ \overline{\Omega_j}\cap \overline{\Omega_i}\neq \emptyset}}
    }\Omega_j \qquad \text{ for } i\in\inter{0,N}.
\end{align*}
%
%
%

\section{Preliminary results and properties}\label{sec:three}

\subsection{Localization of the
\texorpdfstring{$\mathbb{H}$}{H}
-norm}\label{sec:localization-norm}

In the following, $C$ denotes possibly different positive constants that are independent of the parameter $h$. 

Let $\Omega=(0,L)$, $h=\frac{L}{N+1}$. Recall that $\Omega_i:=(x_{i},x_{i+1})$ for $i\in\inter{0,N}$ and $\overline{\Omega}=\overline{\cup_{i\in\inter{0,N}}\Omega_i}$. Unlike the usual fractional norm, we note that the $\mathbb H$-norm only has a finite range of interaction, indeed
\begin{align}\label{eq:norm_R_bound}
    \|u\|^2_{\mathbb H(\Omega)} &=\int_{B_1(\Omega)}\int_{B_1(x)}\frac{|u(x)-u(y)|^2}{|x-y|}\, dy\, dx,
\end{align}
because if $\dist(x,\Omega)>1$ and $|x-y|<1$, then $y,x\not\in \Omega$ (\emph{i.e.}, $u(x)=u(y)=0$).  To \emph{localize} this norm we follow the ideas in \cite{Bor17}, which are based on the works \cite{Fae00,Fae02}.

\begin{lemma}\label{lem:localization_enorm}
Let $L>0$ be fixed and $N\in\mathbb N$ be large enough so that $h\in(0,\frac{1}{2})$. Then, there exists $C>0$ only depending on $L$ such that
   \begin{align*}\notag
    \|u\|^2_{\mathbb H(\Omega)} &\leq \sum_{i\in\inter{0,N}}\int_{\Omega_i}\int_{S_i}\frac{|u(x)-u(y)|^2}{|x-y|}dy\,dx+C\ell^{-1}(h)\sum_{i\in\inter{0,N}}\int_{\Omega_i}|u(x)|^2\,dx +C\sum_{i\in\{0,N\}}\int_{\Omega_i}|u(x)|^2\ell^{-1}(d(x))\,dx
\end{align*}
for any $u\in \mathbb{H}(\Omega)$ and $\ell$ as in \eqref{ell:def}.
\end{lemma}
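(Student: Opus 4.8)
The plan is to start from the finite-range identity \eqref{eq:norm_R_bound} and dissect the double integral according to where $x$ lives. First I would write $B_1(\Omega) = \bigcup_{i\in\inter{0,N}}\Omega_i \cup (B_1(\Omega)\setminus\Omega)$ and observe that on $B_1(\Omega)\setminus\Omega$ one has $u(x)=0$, so the outer integral effectively runs over $\bigcup_i \Omega_i$ (together with the part of the kernel coming from the ``tail'' outside $\Omega$, which I handle below). Thus
\[
\|u\|^2_{\mathbb H(\Omega)} = \sum_{i\in\inter{0,N}}\int_{\Omega_i}\int_{B_1(x)}\frac{|u(x)-u(y)|^2}{|x-y|}\,dy\,dx + (\text{tail terms with }u(x)=0).
\]
For the inner integral I would split $B_1(x) = S_i \cup (B_1(x)\setminus S_i)$. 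The contribution of $S_i$ is exactly the first term in the claimed bound, so the work is to estimate $\int_{\Omega_i}\int_{B_1(x)\setminus S_i}\frac{|u(x)-u(y)|^2}{|x-y|}\,dy\,dx$. On $B_1(x)\setminus S_i$ one has $|x-y|\gtrsim h$ (since $x\in\Omega_i$ and $y$ is outside the three-interval neighborhood $S_i$), so $|x-y|^{-1}\lesssim h^{-1}$; using $|u(x)-u(y)|^2 \le 2|u(x)|^2 + 2|u(y)|^2$ and the finite measure of $B_1(x)$, the $|u(x)|^2$ part contributes $\lesssim h^{-1}\int_{\Omega_i}|u(x)|^2\,dx$. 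However, $h^{-1}$ is far worse than the claimed $\ell^{-1}(h)=|\ln h|$, so the crude bound $|x-y|^{-1}\lesssim h^{-1}$ must be refined: one integrates the kernel $\int_{B_1(x)\setminus S_i}|x-y|^{-1}\,dy = \int_{\{h \lesssim |z| \le 1\}}|z|^{-1}\,dz \sim |\ln h| = \ell^{-1}(h)$ (up to constants), which is precisely where the logarithmic weight $\ell^{-1}(h)$ enters. This yields the second term $C\ell^{-1}(h)\sum_i \int_{\Omega_i}|u(x)|^2\,dx$.

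The remaining piece is the $|u(y)|^2$ part of the split together with the tail terms (where $x\notin\Omega$, $u(x)=0$, but $y$ may be in $\Omega$). By Fubini these all have the form $\int_{\Omega}|u(y)|^2\,g(y)\,dy$ where $g(y) = \int_{\{x : |x-y|\ge (\text{roughly }h \text{ or more})\}\cap(\text{suitable region})}|x-y|^{-1}\,dx$, the integration region in $x$ being $B_1(\Omega)$ minus the near-diagonal/near-$S_i$ part. For $y\in\Omega_i$ with $i$ in the ``interior'' (i.e.\ $\dist(y,\partial\Omega)\gtrsim 1$, equivalently $y$ lies at distance $\ge 1$ from both endpoints), the $x$-region stays bounded away from $y$ by a fixed distance on the side toward $\partial\Omega$, so $g(y)\lesssim |\ln h|$ again, giving another contribution of the second type. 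For $y$ in the two boundary intervals $\Omega_0$ and $\Omega_N$ — or more generally for $y$ close to $\partial\Omega$ — the region of $x$ available is cut off not at distance $1$ but at distance $\sim d(y)$ (because beyond $\partial\Omega$ on that side we are outside $B_1(\Omega)$ only after distance $1$, but the relevant logarithmic divergence is governed by how close the near-diagonal exclusion zone of radius $\sim h$ sits relative to $d(y)$). Carefully, $g(y) \sim \int_{h}^{\max(d(y),\,\cdot)}r^{-1}\,dr \lesssim |\ln h| + |\ln d(y)| \lesssim \ell^{-1}(d(y))$ when $d(y)$ can be as small as $h$; summing over such $y$ produces the third term $C\sum_{i\in\{0,N\}}\int_{\Omega_i}|u(x)|^2\ell^{-1}(d(x))\,dx$. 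Here I would use Lemma~\ref{prop1} (semi-homogeneity of $\ell$) to pass between $\ell^{-1}(h)$, $\ell^{-1}(d(x))$, and products/quotients cleanly, and the concavity/monotonicity of $\ell$ to bound $\ell^{-1}(h)+\ell^{-1}(d(x)) \lesssim \ell^{-1}(\min(h,d(x))) \lesssim \ell^{-1}(d(x))$ on the boundary intervals (where $d(x)\le h$ is possible only at the extreme corner, and otherwise $d(x)\ge$ a fixed fraction of $h$ so $\ell^{-1}(d(x))\sim\ell^{-1}(h)$, which is absorbed in the second sum).

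The main obstacle I expect is bookkeeping the boundary/tail terms precisely enough to get the \emph{sharp} logarithmic weight rather than an $h^{-1}$ loss: one must resist the temptation to bound $|x-y|^{-1}$ pointwise and instead always integrate the kernel over the annular region $\{h \lesssim |x-y| \lesssim 1\}$ (or $\{h\lesssim|x-y|\lesssim d(y)\}$ near the boundary) to harvest the $|\ln h|$ factor — this is the one place where the specific structure of the zero-order kernel $|x-y|^{-1}$, as opposed to $|x-y|^{-1-2s}$, is essential, and it is exactly what forces the logarithmic rather than algebraic convergence rate in the main theorem. A secondary technical point is ensuring that the near-diagonal contribution $\int_{\Omega_i}\int_{B_1(x)\cap S_i^c,\,|x-y|\le \text{const}}(\cdots)$ does not overlap with what is already counted in the $S_i$ term and that the exclusion of $S_i$ genuinely gives $|x-y|\ge h$; this is immediate from the definition \eqref{eq:def_Si} of $S_i$ as a three-cell neighborhood, since $x\in\Omega_i=(x_i,x_{i+1})$ and $y\notin S_i\supset((i-1)h,(i+2)h)$ forces $|x-y|>h$. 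Once these estimates are in place, collecting the three groups of terms gives exactly the stated inequality, with $C$ depending only on $L$ through the total length of $B_1(\Omega)$ and the number of boundary intervals.
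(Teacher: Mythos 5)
Your overall strategy is the same as the paper's: exploit the finite interaction range, split the double integral into a near‑diagonal part (kept as the $S_i$-terms), a far part handled by integrating the kernel over an annulus $\{h\lesssim|x-y|\lesssim 1\}$ to harvest the factor $|\ln h|=\ell^{-1}(h)$, and a boundary part where the annulus starts at $d(x)$ instead of $h$, producing the weight $\ell^{-1}(d(x))$ on $\Omega_0$ and $\Omega_N$. The only structural difference is cosmetic: the paper enlarges to $B_2(\Omega)\times B_2(\Omega)$, isolates the exterior contributions as two separate integrals ($y$ at distance $\ge h$ from $\Omega$, and $y\in B_h(\Omega)\setminus\Omega$), and disposes of the $|u(y)|^2$-terms via the symmetry $\sum_i J_{i,1}=\sum_i J_{i,2}$, whereas you handle them by Fubini directly; both work.

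There is, however, one concrete error in your bookkeeping. You assert that $y\notin S_i\supset((i-1)h,(i+2)h)$ forces $|x-y|>h$, but the inclusion is backwards: by \eqref{eq:def_Si}, $S_i=((i-1)h,(i+2)h)\cap(0,L)\subset((i-1)h,(i+2)h)$. For $i\in\{0,N\}$ the set $B_1(x)\setminus S_i$ therefore contains points of $\R\setminus\Omega$ arbitrarily close to $x$: for $x\in\Omega_0=(0,h)$ one has
\begin{align*}
\int_{B_1(x)\setminus S_0}\frac{dy}{|x-y|}\;\geq\;\int_{x-1}^{0}\frac{dy}{x-y}\;=\;\ln\frac{1}{x}\;\sim\;\ell^{-1}(d(x)),
\end{align*}
which is \emph{not} $O(\ell^{-1}(h))$. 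So your claim that the $|u(x)|^2$-part of the off-$S_i$ integral always lands in the second term is false on the two boundary cells; there it must be absorbed into the third term with the weight $\ell^{-1}(d(x))$ (which is exactly why that term appears in the statement — in the paper this is the estimate of $H_0$ and $H_N$ inside $I_3$). Your plan does derive the $\ell^{-1}(d(\cdot))$ bound, but attributes it only to the tail/$|u(y)|^2$ contribution, so as written the justification of the second term on $\Omega_0$ and $\Omega_N$ is wrong; the fix is routine (send the boundary-cell part of the $|u(x)|^2$ contribution to the third term), and note that within $\Omega\times\Omega$ the bound $|x-y|>h$ for $x\in\Omega_i$, $y\in\Omega\setminus S_i$ \emph{is} correct, which is how the paper keeps its $I_1$-estimate uniform in $i$.
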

\begin{proof}
We note that for $x\in B_1(\Omega)$, $B_1(x)\subset B_2(\Omega)$, therefore
\begin{align}\notag
    \|u\|_{\mathbb H(\Omega)}^2&\leq \int_{B_2(\Omega)}\int_{B_2(\Omega)}\frac{|u(x)-u(y)|^2}{|x-y|}dy\, dx\\ \notag
    &=\int_{\Omega}\int_{\Omega}\frac{|u(x)-u(y)|^2}{|x-y|}dy\,dx+2\int_{\Omega}|u(x)|^2\int_{B_2(\Omega)\setminus B_h(\Omega)}\frac{1}{|x-y|}dy\,dx+2\int_{\Omega}|u(x)|^2\int_{B_h(\Omega)\setminus \Omega}\frac{1}{|x-y|}dy\,dx\\ &=: I_1+2I_2+2I_3,\label{eq:part_init}
\end{align}
where we have used that $u=0$ in $\R\setminus \Omega$. 

We proceed to estimate each $I_i$. For $I_1$, we see that  
\begin{equation*}
    I_1=\sum_{i\in\inter{0,N}}\left(\int_{\Omega_i}\int_{S_i}\frac{|u(x)-u(y)|^2}{|x-y|}\,dy\,dx+\int_{\Omega_i}\int_{D_i}\frac{|u(x)-u(y)|^2}{|x-y|}\,dy\,dx\right),
\end{equation*}
where $D_i=\Omega\setminus S_i$. We keep the first integrals as they are. For the second ones, we have, by triangle inequality,
\begin{align*}\notag
    \int_{\Omega_i}&\int_{D_i} \frac{|u(x)-u(y)|^2}{|x-y|}dy\,dx\leq 2\int_{\Omega_i}|u(x)|^2\int_{D_i}|x-y|^{-1}dy\,dx+2\int_{D_i}|u(y)|^2\int_{\Omega_i}|x-y|^{-1}dx\,dy=:2J_{i,1}+2J_{i,2}.
\end{align*}
Following \cite[Prop. 1.2.24]{Bor17}, it is not difficult to see that $\sum_{i}{J_{i,1}}=\sum_{i}{J_{i,2}}$. Hence,
\begin{align*}
    I_1\leq \sum_{i\in\inter{0,N}}\left(\int_{\Omega_i}\int_{S_i}\frac{|u(x)-u(y)|^2}{|x-y|}dy\,dx+4 J_{i,1}\right).
\end{align*}
To bound $J_{i,1}$, note that
\begin{align}\notag
    J_{i,1}&\leq \int_{\Omega_i}|u(x)|^2\int_{B_{L}(x)\setminus B_{\frac{h}{2}}(x)}|x-y|^{-1}dy\,dx \\\label{eq:est_Ij1}
    &\leq 2\int_{\Omega_i}|u(x)|^2\int_{\frac{h}{2}}^{L}\rho^{-1}d\rho\,dx
    =2\left(-\log\left(\tfrac{h}{2L}\right)\right)\int_{\Omega_i}|u(x)|^2\,dx
    \leq C\ell^{-1} (h)\int_{\Omega_i}|u(x)|^2\,dx
\end{align}
for some $C>0$ independent of $h$ depending on $\Omega$ and where we have used that $h\in(0,1/2)$ in the last inequality. Thus, we have proved that
\begin{equation}\label{eq:est_I1_final}
    I_1\leq \sum_{i\in\inter{0,N}}\left(\int_{\Omega_i}\int_{S_i}\frac{|u(x)-u(y)|^2}{|x-y|}dy\,dx + C\ell^{-1}(h)\int_{\Omega_i}|u(x)|^2\,dx\right).
\end{equation}

For $I_2$, note that $I_2=\sum_{i\in\inter{0,N}}\int_{\Omega_i}|u(x)|^2\int_{B_2(\Omega)\setminus B_{h}(\Omega)}|x-y|^{-1}dy\,dx.$ Since $\dist(\Omega_i,B_2(\Omega)\setminus B_h(\Omega))>h$ for all $i\in\inter{0,N}$, we can argue as we did for \eqref{eq:est_Ij1} to obtain
\begin{align}\label{eq:est_I2}
    I_2\leq C^\prime\ell^{-1}(h)\sum_{i\in\inter{0,N}}\int_{\Omega_i}|u(x)|^2\,dx
\end{align}
for some $C^\prime>0$ only depending on $\Omega$. To estimate $I_3$, we begin by writing
\begin{align}\label{eq:est_I3_init}
    I_3=H_0+H_N+\sum_{i\in\inter{1,N-1}}H_i,
\end{align}
where $H_i=\int_{\Omega_i}|u(x)|^2\int_{B_h(\Omega)\setminus \Omega}|x-y|^{-1}dy\,dx$ for $i\in\inter{0,N}$. Note that $\dist(\Omega_i,B_h(\Omega)\setminus\Omega)>h$ for $i\in\inter{1,N-1}$, so the second sum of \eqref{eq:est_I3_init} can be estimated as we did for $I_2$. On the other hand, for indices $i\in\{0,N\}$, we have that for some $R>0$ large enough
\begin{align*}
    H_i&\leq \int_{\Omega_i}|u(x)|^2\int_{B_R(0)\setminus B_{d(x)}(x)}|x-y|^{-1}dy\,dx \\
    &\leq \int_{\Omega_i}|u(x)|^2\left(-\log\left(\tfrac{d(x)}{R}\right)\right)\,dx \leq C \int_{\Omega_i}|u(x)|^2\ell^{-1}(d(x))\,dx,
\end{align*}
where $C>0$ only depends on $\Omega$. Therefore,
\begin{align}\label{eq:est_I3}
    I_3\leq C \sum_{i\in\{0,N\}}\int_{\Omega_i}|u(x)|^2\ell^{-1}(d(x))\,dx + C\ell^{-1}(h)\sum_{i\in\inter{1,N-1}}\int_{\Omega_i}|u(x)|^2\,dx.
\end{align}
Finally, putting together \eqref{eq:est_I1_final}, \eqref{eq:est_I2}, and \eqref{eq:est_I3} gives the desired result. 
\end{proof}

\subsection{A density argument}

Recall the shape functions given in \eqref{eq:def_basis} and the space $\mathcal V_h$ defined in \eqref{Vh:def}. We define the (quasi-) interpolator $I_h:L^2(\Omega)\to \mathcal V_h$ by
\begin{equation}\label{eq:interpolator}
I_h v(x)=\sum_{k\in\inter{0,N+1}}a_k \varphi_k(x),\qquad \text{ where }
a_k:=\frac{\int_{\Omega}v(x)\varphi_k(x)\,dx}{\int_{\Omega}\varphi_k(x)\,dx}.
\end{equation}
This allows to have the important property that
\begin{align}\label{Ihc}
I_h 1 = 1\qquad \text{ in $\Omega$,}
\end{align}
namely, that the interpolation of a constant function in $\Omega$ is the constant itself.

\begin{lemma}\label{lem:bounds_aks}
Let $v\in L^2(\Omega)$ and $a_k$ as in \eqref{eq:interpolator}. There is $C>0$ independent of $h$ and $v$ such that
\begin{equation}\label{eq:est_ak_H}
    |a_k|\leq C h^{-1/2} \|v\|_{L^2(S_k)}\qquad \text{ for all } k\in\inter{0,N+1}
\end{equation}
with $S_k$ as defined in \eqref{eq:def_Si}. If, in addition, $v\in L^\infty(\Omega)$, we have
\begin{equation}\label{eq:est_ak_Linf}
    |a_k|\leq C \|v\|_{L^\infty(\Omega)} \quad\text{ for all } k\in\inter{0,N+1}.
\end{equation}
\end{lemma}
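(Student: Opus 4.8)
The plan is to estimate each coefficient $a_k$ directly from its definition in \eqref{eq:interpolator} by controlling the numerator via Cauchy--Schwarz and the denominator from below by an explicit computation of $\int_\Omega \varphi_k$. First I would record that, from \eqref{eq:def_basis} and \eqref{eq:def_basis_ext}, the denominator satisfies $\int_\Omega \varphi_k(x)\,dx = h$ for interior indices $k\in\inter{1,N}$ (the integral of a hat function of height one over two subintervals of length $h$) and $\int_\Omega \varphi_k(x)\,dx = h/2$ for the two boundary indices $k\in\{0,N+1\}$ (a single triangular piece). In all cases $\int_\Omega \varphi_k \geq h/2$, so $|a_k| \leq \frac{2}{h}\left|\int_\Omega v(x)\varphi_k(x)\,dx\right|$.

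Next I would exploit that $\varphi_k$ is supported in $\overline{S_k}$ (indeed $\operatorname{supp}\varphi_k \subset [x_{k-1},x_{k+1}]\subset \overline{S_k}$ for interior $k$, and in a single subinterval for $k\in\{0,N+1\}$, which is still contained in $\overline{S_k}$) and that $0\leq \varphi_k\leq 1$. Thus by Cauchy--Schwarz,
\begin{align*}
\left|\int_\Omega v(x)\varphi_k(x)\,dx\right| \leq \int_{S_k}|v(x)|\varphi_k(x)\,dx \leq \|v\|_{L^2(S_k)}\,\|\varphi_k\|_{L^2(S_k)} \leq \|v\|_{L^2(S_k)}\,|S_k|^{1/2}.
\end{align*}
Since $|S_k|\leq 3h$, we get $\left|\int_\Omega v\varphi_k\right|\leq \sqrt{3h}\,\|v\|_{L^2(S_k)}$, and combining with the denominator bound yields $|a_k|\leq \frac{2}{h}\sqrt{3h}\,\|v\|_{L^2(S_k)} = 2\sqrt{3}\,h^{-1/2}\|v\|_{L^2(S_k)}$, which is \eqref{eq:est_ak_H} with $C=2\sqrt 3$ (any $h$-independent constant works).

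For \eqref{eq:est_ak_Linf}, when $v\in L^\infty(\Omega)$ I would instead bound the numerator crudely: $\left|\int_\Omega v(x)\varphi_k(x)\,dx\right|\leq \|v\|_{L^\infty(\Omega)}\int_\Omega \varphi_k(x)\,dx$, so that $|a_k| = \frac{\left|\int_\Omega v\varphi_k\right|}{\int_\Omega\varphi_k}\leq \|v\|_{L^\infty(\Omega)}$; here one can even take $C=1$. There is essentially no obstacle in this lemma — it is a bookkeeping exercise. The only point requiring a little care is the treatment of the two endpoint basis functions $\varphi_0$ and $\varphi_{N+1}$, whose support and mass differ by a factor of two from the interior ones; but since we only need uniform (in $h$) constants and the estimates go through with the worst-case bounds $\int_\Omega\varphi_k\geq h/2$ and $|S_k|\leq 3h$, nothing delicate happens. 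I would write the two displays above, remark once on the endpoint case, and conclude.
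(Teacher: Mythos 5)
Your proof is correct and follows essentially the same route as the paper's: bound the denominator $\int_\Omega\varphi_k$ from below by a multiple of $h$, note $\operatorname{supp}\varphi_k\subset \overline{S_k}$ with $0\le\varphi_k\le 1$, and apply H\"older/Cauchy--Schwarz for \eqref{eq:est_ak_H} and the trivial sup-bound for \eqref{eq:est_ak_Linf}. The only difference is that you track the explicit constants ($2\sqrt{3}$ and $1$) where the paper is content with a generic $C$; nothing further is needed.
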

\begin{proof}
Let $k\in\inter{0,N+1}$ and $v\in L^2(\Omega)$ be an arbitrary given function. We have that $|a_k|\leq \frac{C}{h}\int_{\Omega}|v||\varphi_k|dx.$ Note that $\textnormal{supp}\,\varphi_k\subset S_k$ for each $k\in\inter{0,N+1}$. Using H\"older's inequality, $|a_k|\leq C h^{-1}\|v\|_{L^1(S_k)}\leq Ch^{-1/2}\|v\|_{L^2(S_k)}.$ Similarly,   $|a_k|\leq \frac{C}{h}\|v\|_{L^\infty(\Omega)}\int_{S_k}|\varphi_k| dx \leq C\|v\|_{L^\infty(\Omega)}.$
\end{proof}

\begin{lemma}\label{lem:poincare_type}
Let $U\subset \R$ be an open bounded interval, $v\in C^1(\overline{U})$, and $\overline v=\frac{1}{|U|}\int_{U}v$; then,
\begin{align*}
\|v-\overline{v}\|_{L^2(U)}^2\leq \|v'\|^2_{L^\infty(U)}|U|^3.
\end{align*}
\end{lemma}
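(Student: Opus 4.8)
The plan is to prove the standard $L^2$-Poincaré-type estimate for a $C^1$ function on an interval against its average by writing the difference $v(x)-\overline v$ as an average of increments $v(x)-v(y)$ over $y\in U$, and then estimating each increment by the mean value theorem. Concretely, since $\overline v=\frac{1}{|U|}\int_U v(y)\,dy$, for every $x\in U$ we have $v(x)-\overline v=\frac{1}{|U|}\int_U\bigl(v(x)-v(y)\bigr)\,dy$.

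The key steps, in order, are as follows. First I would apply the mean value theorem (or the fundamental theorem of calculus) to bound $|v(x)-v(y)|\le\|v'\|_{L^\infty(U)}|x-y|\le\|v'\|_{L^\infty(U)}|U|$ for all $x,y\in U$, using that $U$ is an interval so the segment from $x$ to $y$ stays in $\overline U$. Second, I would plug this into the averaging identity to get the pointwise bound $|v(x)-\overline v|\le\frac{1}{|U|}\int_U\|v'\|_{L^\infty(U)}|U|\,dy=\|v'\|_{L^\infty(U)}|U|$. Third, I would square this pointwise bound and integrate over $x\in U$, which immediately yields $\|v-\overline v\|_{L^2(U)}^2\le\|v'\|_{L^\infty(U)}^2|U|^2\cdot|U|=\|v'\|_{L^\infty(U)}^2|U|^3$, as claimed. (Alternatively one could use Jensen's inequality on the inner average before squaring, which gives the same constant; the crude bound $|x-y|\le|U|$ is already enough to land exactly on $|U|^3$.)

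There is essentially no obstacle here — this is a routine one-line computation once the averaging identity is written down — so the only thing to be careful about is the regularity bookkeeping: the hypothesis $v\in C^1(\overline U)$ guarantees $v'$ is bounded on $\overline U$, so $\|v'\|_{L^\infty(U)}<\infty$, and convexity of the interval $U$ justifies the mean value estimate $|v(x)-v(y)|\le\|v'\|_{L^\infty}|x-y|$. The estimate as stated is not sharp (the sharp constant would involve a Poincaré constant strictly smaller than $1$), but since later applications only need the scaling in $|U|$ and in $\|v'\|_{L^\infty}$, the crude bound suffices and keeps the argument short.
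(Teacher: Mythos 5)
Your proof is correct and rests on the same core mechanism as the paper's: the averaging identity $v(x)-\overline v=\frac{1}{|U|}\int_U(v(x)-v(y))\,dy$ followed by the mean value theorem and $|x-y|\le|U|$. The only cosmetic difference is that the paper applies Jensen's inequality first and passes through the intermediate bound $\int_U\int_U\frac{|v(x)-v(y)|^2}{|x-y|}\,dy\,dx$ (a form it reuses in the weighted analogue, Lemma~\ref{lem:poincare_type_weight}), whereas your pointwise bound reaches the same constant $|U|^3$ more directly.
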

\begin{proof}
By definition and using Jensen's inequality,
\begin{align*}
    \int_{U}|v(x)-\overline{v}|^2dx=\frac{1}{|U|^2}\int_{U}\left|\int_{U}(v(x)-v(y))dy\right|^2dx \leq \frac{1}{|U|}\int_{U}\int_{U}|v(x)-v(y)|^2dy dx,
\end{align*}
whence $\int_{U}|v(x)-\overline{v}|^2dx \leq \int_{U}\int_{U}\frac{|v(x)-v(y)|^2}{|x-y|}dy dx.$ By mean value theorem, since $|x-y|\leq |U|$ for all $x,y\in U$,
\begin{equation*}
    \int_{U}|v(x)-\overline{v}|^2dx 
    \leq \|v'\|^2_{L^\infty(U)}\int_{U}\int_{U}|x-y|dy dx
    \leq \|v'\|^2_{L^\infty(U)}|U|^3.\qedhere
\end{equation*}
\end{proof}

\begin{lemma}\label{lem:inter_l_infty}
    Let $v\in L^\infty(\Omega)$. Then, $\|I_h v\|_{L^\infty(\Omega)}\leq C\|v\|_{L^\infty(\Omega)}$ for some $C>0$ uniform with respect to $h$.
\end{lemma}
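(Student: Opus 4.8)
The plan is to prove $\|I_h v\|_{L^\infty(\Omega)}\leq C\|v\|_{L^\infty(\Omega)}$ by working locally on each subinterval $\Omega_i$ and using the fact that a piecewise linear function attains its sup at a node. First I would fix $i\in\inter{0,N}$ and note that since $I_h v|_{\Omega_i}\in\mathcal P_1$, we have $\|I_h v\|_{L^\infty(\Omega_i)}=\max(|a_i|,|a_{i+1}|)$, where the $a_k$ are the coefficients from \eqref{eq:interpolator} (with the convention that $a_0$ and $a_{N+1}$ are the coefficients of the boundary shape functions $\varphi_0,\varphi_{N+1}$). Hence $\|I_h v\|_{L^\infty(\Omega)}=\max_{k\in\inter{0,N+1}}|a_k|$.

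Then I would invoke the $L^\infty$ bound on the coefficients already established in Lemma~\ref{lem:bounds_aks}, namely \eqref{eq:est_ak_Linf}, which gives $|a_k|\leq C\|v\|_{L^\infty(\Omega)}$ for every $k\in\inter{0,N+1}$ with $C$ independent of $h$. Taking the maximum over $k$ yields $\|I_h v\|_{L^\infty(\Omega)}\leq C\|v\|_{L^\infty(\Omega)}$, which is exactly the claim.

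There is essentially no serious obstacle here: the only point requiring a line of care is the elementary observation that a continuous piecewise-affine function on the mesh takes its maximum (in absolute value) at one of the mesh nodes $x_k$, and that $I_h v(x_k)$ equals $a_k$ for the interior nodes while for the two endpoint situations one still only sees the coefficients $a_0,\dots,a_{N+1}$ since no other basis functions are supported near $\partial\Omega$. Once that is noted, the bound is an immediate consequence of \eqref{eq:est_ak_Linf}. I would also remark that the uniformity of $C$ in $h$ is inherited directly from the corresponding uniformity in Lemma~\ref{lem:bounds_aks}, so no new dependence on the discretization parameter is introduced.
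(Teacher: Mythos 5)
Your proof is correct and rests on the same key ingredient as the paper's, namely the coefficient bound \eqref{eq:est_ak_Linf} from Lemma~\ref{lem:bounds_aks}; the only cosmetic difference is that the paper concludes via the partition-of-unity identity $\sum_{k}\varphi_k\equiv 1$ with $\varphi_k\geq 0$, whereas you use the equivalent observation that a piecewise-affine function attains its maximum modulus at the mesh nodes. Either final step is immediate, so this is essentially the paper's argument.
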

\begin{proof}
Let $x\in \Omega$ be arbitrary. From \eqref{eq:interpolator} and \eqref{eq:est_ak_Linf},
\begin{equation*}
|I_h v(x)|
=\left|\sum_{k\in\inter{0,N+1}}a_k \varphi_k(x)\right|
\leq C\|v\|_{L^\infty(\Omega)}\sum_{k\in\inter{0,N+1}}\varphi_k(x)=C\|v\|_{L^\infty(\Omega)}. \qedhere
\end{equation*}
\end{proof}

\begin{lemma}\label{lem:loc_norm_interp}
Let $i\in\inter{0,N}$, $v\in \mathbb H(\Omega)$, and $a_k$ as in \eqref{eq:interpolator}.  There is $C>0$ independent of $h$ such that
\begin{enumerate}[label=\upshape(\roman*)]
\item $\displaystyle \int_{\Omega_i}\int_{S_i}\frac{|I_hv(x)-I_hv(y)|^2}{|x-y|}dy dx\leq Ch\sum_{k\in I_i} a_k^2$,
\item  $\displaystyle \|I_h u\|^2_{L^2(\Omega_i)}\leq Ch\sum_{k\in I_i^\prime} a_k^2$,
\end{enumerate}
where
\begin{gather}\label{def_Ii}
I_i:=\{j\in\inter{0,N+1}:\operatorname{supp}(\varphi_j)\cap S_i\neq 0\}=\{i-1,i,i+1,i+2\}\cap \inter{0,N+1}, \\ \label{def_Iiprime}
I^\prime_i:=\{j\in\inter{0,N+1}:\varphi_j(x)\neq 0 \ \textnormal{for $x\in \Omega_i$}\}=\{i,i+1\}.
\end{gather}
\end{lemma}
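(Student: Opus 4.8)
\textbf{Proof plan for Lemma~\ref{lem:loc_norm_interp}.}
The plan is to reduce both estimates to elementary computations on a fixed mesh element by exploiting that $I_h v$ is a piecewise-linear function whose values at the nodes are controlled by the coefficients $a_k$. First I would recall that on the interval $\Omega_i=(x_i,x_{i+1})$ the only shape functions that are nonzero are $\varphi_i$ and $\varphi_{i+1}$, so $I_h v|_{\Omega_i}(x)=a_i\varphi_i(x)+a_{i+1}\varphi_{i+1}(x)$ is the affine interpolant of the nodal values $a_i$ at $x_i$ and $a_{i+1}$ at $x_{i+1}$. In particular $\|(I_h v)'\|_{L^\infty(\Omega_i)}=|a_{i+1}-a_i|/h\le (|a_i|+|a_{i+1}|)/h$ and $\|I_h v\|_{L^\infty(\Omega_i)}\le \max(|a_i|,|a_{i+1}|)$.

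For part (ii), since $I_h u$ restricted to $\Omega_i$ is bounded by $\max_{k\in I_i'}|a_k|$ pointwise and $|\Omega_i|=h$, we immediately get $\|I_h u\|_{L^2(\Omega_i)}^2\le h\,(\max_{k\in I_i'}|a_k|)^2\le h\sum_{k\in I_i'}a_k^2$, which is the claimed bound with $I_i'=\{i,i+1\}$. For part (i), the set $S_i$ meets at most the four elements $\Omega_{i-1},\Omega_i,\Omega_{i+1}$ (more precisely $S_i=((i-1)h,(i+2)h)\cap(0,L)$), so $I_h v$ is piecewise linear on $S_i$ with a uniformly bounded number of pieces, and on each piece its derivative is bounded by $C(|a_{k}|+|a_{k+1}|)/h$ with indices ranging over $I_i=\{i-1,i,i+1,i+2\}\cap\inter{0,N+1}$. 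I would split the double integral over $\Omega_i\times S_i$ according to whether $x$ and $y$ lie in the same subinterval or in (adjacent) different ones. When $x,y$ lie in the same subinterval, apply the mean value theorem: $|I_hv(x)-I_hv(y)|^2\le \|(I_hv)'\|^2_{L^\infty}|x-y|^2$, so dividing by $|x-y|$ and integrating over a region of diameter $\le 3h$ gives a contribution of order $\|(I_hv)'\|^2_{L^\infty}h^3\le C h\sum_{k\in I_i}a_k^2$. When $x\in\Omega_i$ and $y$ lies in a different (adjacent) subinterval, write $|I_hv(x)-I_hv(y)|\le \|(I_hv)'\|_{L^\infty(S_i)}|x-y|$ on the convex segment joining them (legitimate since $I_hv$ is globally continuous and piecewise affine on $S_i$), and again the $|x-y|^{-1}$ kernel is integrable over the bounded region $\Omega_i\times S_i$ of diameter $\le 3h$, yielding the same order; alternatively one can bound $|I_hv(x)-I_hv(y)|\le 2\max_{k\in I_i}|a_k|$ and integrate $|x-y|^{-1}$ over $\Omega_i\times S_i$, which produces a harmless $|\ln h|$ that is in fact unnecessary if one uses the Lipschitz bound. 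Summing the finitely many contributions and absorbing constants gives (i).

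I expect the only mildly delicate point to be making sure the Lipschitz estimate $|I_hv(x)-I_hv(y)|\le \|(I_hv)'\|_{L^\infty(S_i)}|x-y|$ is applied on the right set: it holds because $S_i$ is an interval, $I_hv$ is continuous and piecewise $C^1$ there, so it is globally Lipschitz on $S_i$ with constant $\|(I_hv)'\|_{L^\infty(S_i)}\le C h^{-1}\max_{k\in I_i}|a_k|$; everything else is a routine count of the $O(1)$ many mesh pieces intersecting $S_i$ and the elementary bound $\int_{\Omega_i}\int_{S_i}|x-y|\,dy\,dx\le C h^3$. No additional regularity of $v$ beyond $v\in\mathbb H(\Omega)$ (indeed only $v\in L^2(\Omega)$, to make $a_k$ well defined) is needed, since the argument uses only the piecewise-linear structure of $I_hv$ and the definition of the $a_k$.
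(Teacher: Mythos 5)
Your proposal is correct and follows essentially the same route as the paper: both rest on the piecewise-linear structure of $I_hv$, a mean-value/Lipschitz bound with constant of order $h^{-1}\max_{k\in I_i}|a_k|$, and the elementary estimate $\int_{\Omega_i}\int_{S_i}|x-y|\,dy\,dx\leq Ch^3$; the only cosmetic difference is that the paper first separates the coefficients via the triangle inequality and bounds each $\varphi_k$ individually, while you bound the Lipschitz constant of $I_hv$ on $S_i$ directly. Part (ii) is likewise the same one-line computation in both versions.
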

\begin{proof}
We begin by proving item (i). Given $i\in\inter{0,N}$, let $x\in \Omega_i$ and $y\in S_i$ such that $x\neq y$.  From definitions \eqref{eq:def_basis}, \eqref{eq:interpolator}, and \eqref{def_Ii} it can be readily seen that
\begin{equation}\label{a1}
    I_hv(x)-I_hv(y)=\sum_{k\in I_i} a_k(\varphi_k(x)-\varphi_k(y)).
\end{equation}
Since $I_i$ has at most 4 indices, squaring both sides of \eqref{a1} and using triangle inequality,
\begin{equation}\label{eq:est_interp_1}
    \int_{\Omega_i}\int_{S_i}\frac{|I_hv(x)- I_hv(y)|^2}{|x-y|} dy dx \leq C \sum_{k\in I_i} a_k^2\int_{\Omega_i}\int_{S_i} \frac{|\varphi_k(x)- \varphi_k(y)|^2}{|x-y|} dy dx.
\end{equation}

Noting that the functions $\varphi_k$ are Lipschitz continuous and verify that $|\varphi_k^\prime|\leq 1/h$ for all $k\in\inter{0,N+1}$, we can use the mean value theorem to deduce
\begin{equation}\label{eq:est_interp_2}
    \int_{\Omega_i}\int_{S_i}\frac{|\varphi_k(x)-\varphi_k(y)|^2}{|x-y|}dy dx \leq \frac{1}{h^2}\int_{\Omega_i}\int_{S_i}|x-y|dy dx \leq Ch,
\end{equation}
since $|x-y|\leq 2h$ for all $x\in \Omega_i$ and $y\in S_i$. Combining \eqref{eq:est_interp_1} and \eqref{eq:est_interp_2} yields (i). 

For (ii), note that from \eqref{eq:def_basis}, \eqref{eq:interpolator}, and \eqref{def_Iiprime}, we have
\begin{align*}
\|I_h u\|^2_{L^2(\Omega_i)}
\leq \sum_{k\in I_i^\prime}|a_k|^2\int_{\Omega_i}|\varphi_k(x)|^2\, dx
\leq C\sum_{k\in I_i^\prime}|a_k|^2|\Omega_i|
=Ch\sum_{k\in I_i^\prime}|a_k|^2.
\end{align*}
\end{proof}

\begin{lemma}\label{lem:stab_dif_interp}
Let $i\in\inter{0,N}$ and $v\in C_c^\infty(\Omega)$. There is $C>0$ independent of $h$ and $v$ such that
\begin{align}\label{eq:approxim_estimate}
    \int_{\Omega_i}\int_{S_i}\frac{|(v-I_h v)(x)-(v-I_h v)(y)|^2}{|x-y|}dy dx \leq C \|v^\prime\|^2_{L^\infty(\Omega)} h^3.
\end{align}
\end{lemma}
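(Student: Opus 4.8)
The plan is to reduce the estimate \eqref{eq:approxim_estimate} to the Poincaré-type inequality of Lemma~\ref{lem:poincare_type} applied on a small neighborhood of $\Omega_i$. The key observation is that $v - I_h v$ has vanishing ``weighted average'' against the shape functions on the relevant patch, so it behaves like a function with zero mean there, and hence its oscillation is controlled by $h^3 \|v'\|_{L^\infty}^2$ times the measure of the integration region.

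\textbf{Step 1: Localize $I_h v$ to a patch.} First I would fix $i\in\inter{0,N}$ and let $\omega_i := S_{i-1}\cup S_i$ (or a slightly larger interval of length $\le Ch$ containing $\bigcup_{k\in I_i}\operatorname{supp}\varphi_k$, so that it still satisfies $|\omega_i|\le Ch$). For $x\in\Omega_i$ and $y\in S_i$, only the basis functions $\varphi_k$ with $k\in I_i$ (at most four indices, by \eqref{def_Ii}) are active, so $I_h v$ restricted to $\Omega_i\cup S_i$ depends only on the coefficients $a_k$, $k\in I_i$, each of which is a weighted average of $v$ over a subinterval of $\omega_i$.

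\textbf{Step 2: Subtract a constant.} Since $v\in C_c^\infty(\Omega)$ and $I_h$ preserves constants in the sense of \eqref{Ihc} (more precisely, since $\sum_k\varphi_k\equiv 1$ on $\Omega$ and $a_k(c)=c$ for the constant $c$), for any constant $c$ we have $(v - I_h v) = (v-c) - I_h(v-c)$ on $\Omega$. Choosing $c = \bar v_{\omega_i} := \frac{1}{|\omega_i|}\int_{\omega_i} v$, I would write, using the triangle inequality and $|a-b|^2\le 2|a|^2 + 2|b|^2$,
\begin{align*}
\int_{\Omega_i}\int_{S_i}\frac{|(v-I_h v)(x)-(v-I_h v)(y)|^2}{|x-y|}\,dy\,dx
&\le 4\int_{\Omega_i}\int_{S_i}\frac{|(v-c)(x)-(v-c)(y)|^2}{|x-y|}\,dy\,dx\\
&\quad + 4\int_{\Omega_i}\int_{S_i}\frac{|I_h(v-c)(x)-I_h(v-c)(y)|^2}{|x-y|}\,dy\,dx.
\end{align*}

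\textbf{Step 3: Estimate each term.} For the first term: by the mean value theorem $|(v-c)(x)-(v-c)(y)| = |v(x)-v(y)| \le \|v'\|_{L^\infty(\Omega)}|x-y|$, so the integrand is bounded by $\|v'\|_{L^\infty}^2 |x-y| \le 2h\|v'\|_{L^\infty}^2$, and integrating over $\Omega_i\times S_i$ (a set of measure $\le Ch^2$) gives a bound $C\|v'\|_{L^\infty}^2 h^3$. For the second term: by Lemma~\ref{lem:loc_norm_interp}(i) it is bounded by $Ch\sum_{k\in I_i} a_k(v-c)^2$, where $a_k(v-c)$ is the weighted mean of $v-c$ over $\operatorname{supp}\varphi_k\subset\omega_i$; by Lemma~\ref{lem:bounds_aks} (applied to $v-c$, noting $S_k\subset\omega_i$ for $k\in I_i$) we get $|a_k(v-c)|\le Ch^{-1/2}\|v-c\|_{L^2(\omega_i)}$, hence this term is $\le C\|v-c\|_{L^2(\omega_i)}^2$. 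Finally, Lemma~\ref{lem:poincare_type} applied on $U=\omega_i$ with $\overline v = c = \bar v_{\omega_i}$ gives $\|v-c\|_{L^2(\omega_i)}^2\le \|v'\|_{L^\infty(\omega_i)}^2|\omega_i|^3\le C\|v'\|_{L^\infty(\Omega)}^2 h^3$. Combining the two contributions yields \eqref{eq:approxim_estimate}.

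\textbf{Main obstacle.} The only delicate point is bookkeeping with the patches: one must make sure that the interval $\omega_i$ over which the Poincaré inequality is applied (i) contains the supports of all $\varphi_k$ with $k\in I_i$ as well as $\Omega_i\cup S_i$, (ii) still has length $\le Ch$, and (iii) is contained in a fixed neighborhood of $\Omega$ where $\|v'\|_{L^\infty(\omega_i)}\le\|v'\|_{L^\infty(\Omega)}$ (here it matters that $v\in C_c^\infty(\Omega)$, so $v$ and $v'$ vanish near $\partial\Omega$ and there is no boundary issue even for $i\in\{0,N\}$). Everything else is a routine combination of the already-established lemmas, so I do not expect any genuine difficulty beyond this indexing care.
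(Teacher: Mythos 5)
Your proposal is correct and follows essentially the same route as the paper: subtract the local average $\overline v$ over a patch of length $O(h)$ using the constant-preservation property \eqref{Ihc}, bound the $(v-\overline v)$ term by the mean value theorem, and bound the $I_h(v-\overline v)$ term via Lemma~\ref{lem:loc_norm_interp}(i) together with the Poincar\'e-type inequality of Lemma~\ref{lem:poincare_type}. The only cosmetic difference is that the paper fixes the averaging window explicitly as $(x_i-2h,\,x_i+3h)$ and estimates the coefficients $b_k$ directly by Cauchy--Schwarz rather than citing Lemma~\ref{lem:bounds_aks}, which is the same computation.
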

\begin{proof}
Let $i\in\inter{0,N}$, $v\in C_c^\infty(\Omega)$, and $\overline {v}:=\frac{1}{5h}\int_{x_i-2h}^{x_i+3h}v\,dx$.
By \eqref{Ihc}, $v-I_h v=v-\overline{v}+I_h(\overline v - v)$, thus
\begin{align}\notag
    \int_{\Omega_i}&\int_{S_i}\frac{|(v-I_h v)(x)-(v-I_h v)(y)|^2}{|x-y|} dy dx \\
    &\leq 2\int_{\Omega_i}\int_{S_i}\frac{|(v-\overline{v})(x)-(v-\overline{v})(y)|^2}{|x-y|}dy dx + 2 \int_{\Omega_i}\int_{S_i}\frac{|I_h(\overline v-{v})(x)-I_h(\overline v-{v})(y)|^2}{|x-y|}dy dx\notag
    \\& =:2J_1+2J_2.\label{eq:est_dif_inter_1}
\end{align}

Using that $\overline v$ is a constant function, we can use mean value theorem to bound $J_1$ as
\begin{equation}\label{eq:est_J1}
    J_1 = \int_{\Omega_i}\int_{S_i}\frac{|v(x)-v(y)|^2}{|x-y|}dy dx \leq C |v^\prime|^2_{\infty}\int_{S_i}\int_{S_i}|x-y|dy dx \leq C{|v^\prime|^2_{\infty}}h^3,
\end{equation}
since $|x-y|\leq 3h$ for $x,y\in S_i$.

On the other hand, we can use item (i) of \Cref{lem:loc_norm_interp} for $J_2$ and obtain
\begin{equation}\label{eq:est_J2}
    J_2 \leq Ch\sum_{k\in I_i} b_k^2,
\end{equation}
where $b_k:=\frac{1}{h}\int_{\Omega}(v-\overline v)\varphi_k dx$ and $I_i$ as defined in \eqref{def_Ii}. Since $|\varphi_k|\leq 1$,
\begin{equation}\label{abk}
    |b_k|^2
    \leq \frac{1}{h^2}\left(\int_{S_k} |v-\overline v|dx\right)^2
    \leq \frac{C}{h}\int_{S_k}|v-\overline{v}|^2\, dx
    \leq \frac{C}{h}\int_{x_i-2h}^{x_i+3h}|v-\overline{v}|^2\, dx
\end{equation}
for $k\in I_i.$ Using \Cref{lem:poincare_type}, we can estimate the right-hand side of the above expression and obtain
\begin{equation}\label{eq:est_bks}
    |b_k|^2\leq C{\|v^\prime\|^2_{L^\infty(\Omega)}}h^2.
\end{equation}
Collecting estimates \eqref{eq:est_dif_inter_1}--\eqref{eq:est_bks} gives the desired result.
\end{proof}

\begin{remark}\label{rem:l2_stability}
By making minor adjustments to the proof of  \Cref{lem:stab_dif_interp}, we can use item (ii) of \Cref{lem:loc_norm_interp} to obtain the following $L^2$-stability estimate: for $i\in\inter{0,N}$, $\|v-I_hv\|^2_{L^2(\Omega_i)} \leq C{\|v^\prime\|^2_{L^\infty(\Omega)}} h^3$ with $v\in C_c^\infty(\Omega).$ Indeed, let $i\in\inter{0,N}$, $U=({x_i-2h},{x_{i}+2h})$, and $\overline{v}:=\frac{1}{U}\int_U v\, dx$; then, by Lemmas~\ref{lem:poincare_type} and~\ref{lem:loc_norm_interp},
\begin{align}
\|v-I_hv\|^2_{L^2(\Omega_i)}
\leq C\|v-\overline{v}\|_{L^2(\Omega_i)}^2 + \|I_h(v-\overline{v})\|^2_{L^2(\Omega_i)}
\leq C{\|v^\prime\|^2_{L^\infty(\Omega)}}h^3+Ch\sum_{k\in I_i^\prime}a_k^2,\label{a2}
\end{align}
where $a_k=\frac{1}{h}\int_{x_{k-1}}^{x_{k+1}} (v-\overline{v})\varphi_k\, dx$ and $|a_k|\leq \frac{1}{h}\int_{x_i-2h}^{x_{i}+2h}|v-\overline{v}|\, dx$. By Lemma~\ref{lem:poincare_type}, $
    \|v-\overline{v}\|_{L^2(U)}^2\leq C{\|v^\prime\|^2_{L^\infty(\Omega)}}h^3.$ The claim now follows from \eqref{a2}.
\end{remark}

\begin{proposition}\label{eq:estab_H_c_infty}
For any $h\in(0,1/2)$ and any $v\in C_c^\infty(\Omega)$,
\begin{equation}\label{eq:rate_H_c_infty}
    \|v-I_h v\|^2_{\mathbb H(\Omega)}\leq C(\|v\|_{L^\infty(\Omega)},\|v'\|_{L^\infty(\Omega)},L)h\,\ell^{-1}(h),
\end{equation}
where $C(\|v\|_{L^\infty(\Omega)},\|v'\|_{L^\infty(\Omega)},L)>0$ is a constant that only depends on $L$, $\|v\|_{L^\infty(\Omega)}$, $\|v'\|_{L^\infty(\Omega)}$.
\end{proposition}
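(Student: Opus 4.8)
The plan is to combine the localization of the $\mathbb{H}$-norm from Lemma~\ref{lem:localization_enorm} (applied to $v - I_h v$) with the local interpolation estimates already established. First I would note that, since $v\in C_c^\infty(\Omega)$, for $h$ small enough the interpolator $I_h v$ vanishes near $\partial\Omega$: the coefficients $a_0$ and $a_{N+1}$ are zero because $v\equiv 0$ on the boundary intervals $\Omega_0$ and $\Omega_N$ once $h<\operatorname{dist}(\operatorname{supp} v,\partial\Omega)$. Consequently $v - I_h v$ vanishes on $\Omega_0$ and $\Omega_N$, so the boundary term $C\sum_{i\in\{0,N\}}\int_{\Omega_i}|(v-I_h v)(x)|^2\ell^{-1}(d(x))\,dx$ in Lemma~\ref{lem:localization_enorm} is zero (for $h$ in a suitably restricted range; this is absorbed into the $h_0$ implicit in the statement, or one argues directly using the $L^\infty$ and $L^2$ bounds near the boundary — but killing the term outright is cleanest).

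Next I would apply Lemma~\ref{lem:localization_enorm} to $u := v - I_h v \in \mathbb{H}(\Omega)$ to get
\begin{align*}
\|v-I_h v\|^2_{\mathbb H(\Omega)} &\leq \sum_{i\in\inter{0,N}}\int_{\Omega_i}\int_{S_i}\frac{|(v-I_h v)(x)-(v-I_h v)(y)|^2}{|x-y|}dy\,dx \\
&\quad+C\ell^{-1}(h)\sum_{i\in\inter{0,N}}\int_{\Omega_i}|(v-I_h v)(x)|^2\,dx.
\end{align*}
The first sum is bounded term by term using Lemma~\ref{lem:stab_dif_interp}, giving $\sum_i C\|v'\|^2_{L^\infty(\Omega)}h^3$; since there are $N+1 = \frac{L}{h}+1 \leq \frac{C}{h}$ terms, this sums to $C\|v'\|^2_{L^\infty(\Omega)}h^2 \leq C\|v'\|^2_{L^\infty(\Omega)}h\,\ell^{-1}(h)$ (using that $h\,\ell(h) = h|\ln(\min(\rho_0,h))|^{-1}\to 0$, so $h \leq h\,\ell^{-1}(h)$ for small $h$, hence $h^2\le h\ell^{-1}(h)$). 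The second sum is handled by the $L^2$-stability estimate of Remark~\ref{rem:l2_stability}, giving $\sum_i C\|v'\|^2_{L^\infty(\Omega)}h^3$, which again sums to $C\|v'\|^2_{L^\infty(\Omega)}h^2$; multiplying by the prefactor $\ell^{-1}(h)$ yields $C\|v'\|^2_{L^\infty(\Omega)}h^2\ell^{-1}(h) \leq C\|v'\|^2_{L^\infty(\Omega)}h\,\ell^{-1}(h)$, using $h\le 1$ (more precisely $h\le \ell^{-1}(h)$ is not needed here, only $h^2\ell^{-1}(h)\le h\ell^{-1}(h)$ for $h<1$). Collecting the two contributions gives the claimed bound $C(\|v\|_{L^\infty},\|v'\|_{L^\infty},L)\,h\,\ell^{-1}(h)$; the dependence on $\|v\|_{L^\infty}$ enters only through the handling of the boundary intervals if one chooses not to restrict $h$ and instead estimates those terms directly via Lemma~\ref{lem:inter_l_infty}.

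I expect the main technical point to be the bookkeeping of the boundary intervals $\Omega_0$ and $\Omega_N$: Lemma~\ref{lem:stab_dif_interp} and Remark~\ref{rem:l2_stability} are stated for $v\in C_c^\infty(\Omega)$ and do go through for every $i\in\inter{0,N}$, but the extra term $C\sum_{i\in\{0,N\}}\int_{\Omega_i}|(v-I_h v)|^2\ell^{-1}(d(x))\,dx$ from the localization lemma carries the singular weight $\ell^{-1}(d(x))$, which blows up as $d(x)\to 0$. The resolution is that $v-I_h v \equiv 0$ on $\Omega_0\cup\Omega_N$ for $h$ small (as noted above), so this term vanishes identically; this is why the final estimate only needs to hold for $h\in(0,h_0)$ in the main theorem, though here the statement quantifies over all $h\in(0,1/2)$ — so one must instead bound these two boundary terms directly, using that $\|v-I_h v\|_{L^\infty(\Omega_i)}\le C\|v\|_{L^\infty(\Omega)}$ by Lemma~\ref{lem:inter_l_infty} and that $\int_{\Omega_i}\ell^{-1}(d(x))\,dx \le Ch\,\ell^{-1}(h)$ for $i\in\{0,N\}$ (since $d(x)\ge$ something comparable to the distance, and $\int_0^h |\ln t|\,dt \sim h|\ln h| = h\ell^{-1}(h)$). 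This accounts precisely for the appearance of $\|v\|_{L^\infty(\Omega)}$ in the constant. Everything else is routine term-counting, using that the number of subintervals is $\Theta(h^{-1})$.
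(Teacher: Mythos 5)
Your proposal is correct and follows essentially the same route as the paper: localize via Lemma~\ref{lem:localization_enorm}, bound the $\mathcal{O}(h^{-1})$ interior terms by Lemma~\ref{lem:stab_dif_interp} and Remark~\ref{rem:l2_stability} to get $Ch^2+Ch^2\ell^{-1}(h)$, and control the two boundary terms by the $L^\infty$-stability of $I_h$ (Lemma~\ref{lem:inter_l_infty}) together with $\int_{\Omega_i}\ell^{-1}(d(x))\,dx=\mathcal{O}(h\,\ell^{-1}(h))$ (Lemma~\ref{lem:ellbeta}), which is exactly where $\|v\|_{L^\infty(\Omega)}$ enters. The opening detour about killing the boundary term for $h$ small via compact support is unnecessary, but you correctly recognize that the statement covers all $h\in(0,1/2)$ and fall back on the paper's actual argument.
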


\begin{proof}
Along the proof, $C>0$ denotes possibly different constants that depend at most on $L$, $\|v\|_{L^\infty(\Omega)}$, and $\|v'\|_{L^\infty(\Omega)}$.  Combining \eqref{eq:approxim_estimate} and \Cref{rem:l2_stability} with \Cref{lem:localization_enorm} we obtain
\begin{align*}
    \|v-I_h v\|^2_{\mathbb H(\Omega)}&\leq  C h^3(N+1)+Ch^3(N+1)\ell^{-1}(h)+C\sum_{i\in\{0,N\}}\int_{\Omega_i}|v-I_h v|^2\ell^{-1}(d(x))\,dx \\
    &\leq Ch^2+Ch^2\ell^{-1}(h)+C\sum_{i\in\{0,N\}}\int_{\Omega_i}|v-I_h v|^2\ell^{-1}(d(x))\,dx,
\end{align*}
because $h=\frac{L}{N+1}$. Since $v\in L^\infty(\Omega)$, Lemmas \ref{lem:inter_l_infty} and \ref{lem:ellbeta} yield that
\begin{align*}
    \|v-I_h v\|^2_{\mathbb H(\Omega)} &\leq Ch^2+Ch^2\ell^{-1}(h)+C\left(\|v\|_{L^\infty(\Omega)}^2+\|I_hv\|^2_{L^\infty(\Omega)}\right)\sum_{i\in\{0,N\}}\int_{\Omega_i}\ell^{-1}(d(x))\,dx\leq C h\,\ell^{-1}(h),
\end{align*}
as claimed.
\end{proof}

We are in position to prove the main result of this section.

\begin{theorem}\label{density:thm}
    For any $v\in\mathbb H(\Omega)$ there exists $v_h\in\mathcal V_h$ such that $\|v-v_h\|_{\mathbb H(\Omega)}\to 0$ as $h\to 0$. 
\end{theorem}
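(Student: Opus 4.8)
The plan is to prove density of $\mathcal{V}_h$ in $\mathbb{H}(\Omega)$ by a standard three-step approximation argument: first approximate $v\in\mathbb{H}(\Omega)$ by smooth compactly supported functions, then approximate those by their interpolants, and finally combine the two via a triangle inequality with a suitable simultaneous choice of the smoothing and discretization parameters. The key input for the first step is the density of $C_c^\infty(\Omega)$ in $\mathbb{H}(\Omega)$; this follows from the very definition of $\mathbb{H}(\Omega)$ as the closure (completion) of test functions under $\|\cdot\|_{\mathbb{H}(\Omega)}$, as set up following \cite{CW19}, so given $\varepsilon>0$ we may fix $w=w_\varepsilon\in C_c^\infty(\Omega)$ with $\|v-w\|_{\mathbb{H}(\Omega)}<\varepsilon$.

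Next I would invoke Proposition~\ref{eq:estab_H_c_infty}: since $w\in C_c^\infty(\Omega)$, for every $h\in(0,1/2)$ we have
\begin{equation*}
\|w-I_h w\|^2_{\mathbb H(\Omega)}\leq C\big(\|w\|_{L^\infty(\Omega)},\|w'\|_{L^\infty(\Omega)},L\big)\,h\,\ell^{-1}(h).
\end{equation*}
Because $\ell^{-1}(h)=|\ln(\min(\rho_0,h))|\to\infty$ only logarithmically while $h\to 0$, the product $h\,\ell^{-1}(h)\to 0$ as $h\to0$; hence there is $h_\varepsilon>0$ such that $\|w-I_h w\|_{\mathbb{H}(\Omega)}<\varepsilon$ for all $h\in(0,h_\varepsilon)$. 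Note $I_h w\in\mathcal{V}_h$ by construction, so setting $v_h:=I_h w_\varepsilon\in\mathcal{V}_h$ and using the triangle inequality,
\begin{equation*}
\|v-v_h\|_{\mathbb{H}(\Omega)}\leq \|v-w_\varepsilon\|_{\mathbb{H}(\Omega)}+\|w_\varepsilon-I_h w_\varepsilon\|_{\mathbb{H}(\Omega)}<2\varepsilon
\end{equation*}
for all $h\in(0,h_\varepsilon)$. Since $\varepsilon>0$ was arbitrary, this yields $\|v-v_h\|_{\mathbb{H}(\Omega)}\to0$ as $h\to0$, as claimed. (More carefully, one picks a sequence $\varepsilon_m\downarrow 0$, the associated $w_m$ and thresholds $h_m$, and for each $h$ defines $v_h$ using the finest admissible smoothing level; this produces the single family $(v_h)_h$ required by the statement.)

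The only subtlety — the step I would flag as the main obstacle — is the dependence of the constant in Proposition~\ref{eq:estab_H_c_infty} on $\|w\|_{L^\infty(\Omega)}$ and $\|w'\|_{L^\infty(\Omega)}$: these blow up as $w\to v$ in $\mathbb{H}(\Omega)$ (indeed $v$ need not be bounded or differentiable at all, since step functions lie in $\mathbb{H}(\Omega)$). The point is that this is harmless for density: for \emph{fixed} $\varepsilon$ one fixes $w_\varepsilon$ first, which freezes those norms, and only afterwards sends $h\to0$; the constant is then just a (large but finite) number depending on $\varepsilon$ through $w_\varepsilon$. What one does \emph{not} get this way is a rate of convergence in terms of $h$ uniformly over $\mathbb{H}(\Omega)$ — that requires the regularity theory of Theorem~\ref{eq:regularity} and is precisely the content of the error estimate in Theorem~\ref{main:thm:intro}, not of this qualitative density statement.
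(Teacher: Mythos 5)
Your argument is correct and follows essentially the same route as the paper's proof: approximate $v$ by some $w\in C_c^\infty(\Omega)$ using density, control $\|w-I_hw\|_{\mathbb H(\Omega)}$ via Proposition~\ref{eq:estab_H_c_infty} together with $h\,\ell^{-1}(h)\to 0$, and conclude by the triangle inequality. The only differences are cosmetic: the paper takes $v_h:=\Pi_h v$ (the orthogonal projection onto $\mathcal V_h$, using $\|\Pi_h\|_*=1$ and the best-approximation property to reduce to $\|w-I_hw\|_{\mathbb H(\Omega)}$), which makes $v_h$ independent of $\varepsilon$ and avoids your final diagonalization step; also note that $\mathbb H(\Omega)$ is \emph{not} defined here as a completion of test functions, so the density of $C_c^\infty(\Omega)$ is an external recalled fact rather than definitional --- but the paper invokes it in exactly the same way, so this does not affect the validity of your proof.
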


\begin{proof}
From the definition of $\mathbb H(\Omega)$, it is clear that Lipschitz continuous functions which are zero on $\R\setminus \Omega$ belong to $\mathbb H(\Omega)$, hence $\mathcal V_h$ is a subspace of $\mathbb H(\Omega)$.

By Hilbert projection theorem, for every $v\in\mathbb H(\Omega)$ there exists a unique element $v_h\in\mathcal \mathcal V_h$ such that
\begin{equation}\label{eq:prop_infimo}
    \|v-v_h\|_{\mathbb H(\Omega)}=\inf_{w\in\mathcal V_h}\|v-w\|_{\mathbb H(\Omega)}. 
\end{equation}
Actually, $v_h=\Pi_h v$ where $\Pi_h$ is the orthogonal projection from $\mathbb H(\Omega)$ onto $\mathcal V_h$. Additionally, $\|\Pi_h\|_*=1$ where $\|\Pi_h\|_*:=\sup\{\|\Pi_hv\|_{\mathbb H(\Omega)}: v\in\mathbb{H},\;\; \|v\|_{\mathbb H(\Omega)}\leq 1\}$.

Let $\epsilon>0$. We recall that $C_c^\infty(\Omega)$ is dense in $\mathbb H(\Omega)$. Thus, for every $v\in\mathbb H(\Omega)$ there exists $w\in C_c^\infty(\Omega)$ such that  $
    \|v-w\|_{\mathbb H(\Omega)}<\epsilon/3.$ By triangle inequality, we have
\begin{align*}
    \|v-v_h\|_{\mathbb H(\Omega)}&\leq \|v-w\|_{\mathbb H(\Omega)}+\|w-\Pi_h w\|_{\mathbb H(\Omega)}+\|\Pi_h w-\Pi_h v\|_{\mathbb H(\Omega)} \\
    &\leq \frac{2\epsilon}{3}+\|w-I_hw\|_{\mathbb H(\Omega)} \leq \frac{2\epsilon}{3}+{C(\|w\|_{L^\infty(\Omega)},\|w'\|_{L^\infty(\Omega)},L)}h\, \ell^{-1}(h),
\end{align*}
where we have used that $\|\Pi_h\|_*=1$, property \eqref{eq:prop_infimo}, and \Cref{eq:estab_H_c_infty}. {Finally, we choose $h>0$ small enough so that ${C(\|w\|_{L^\infty(\Omega)},\|w'\|_{L^\infty(\Omega)},L)}h\, \ell^{-1}(h)\leq \frac{\epsilon}{3}$ (note that $w\in C^\infty_c(\Omega)$ is independent of $h$)}. This yields the claim.
\end{proof}

\subsection{Quasi-interpolation estimates}\label{sec:estimates}

\begin{lemma}\label{lem:poincare_type_weight}
Let $\alpha>0$, $U\subset \R$ be an open bounded interval, $v\in \mathbb H^{\alpha}(U;\ell)$, and $\overline v=\frac{1}{|U|}\int_{U}v\, dx$, then,
\begin{equation*}
    \|v-\overline{v}\|_{L^2(U)}^2\leq \ell^{\alpha}(|U|)\int_{U}\int_{U}\frac{|v(x)-v(y)|^2}{|x-y|\ell^{\alpha}(|x-y|)}dy dx.
\end{equation*}
\end{lemma}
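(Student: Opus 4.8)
The plan is to follow the same route as the proof of the unweighted \Cref{lem:poincare_type}, but to replace the crude estimate $\tfrac{1}{|U|}\le\tfrac{1}{|x-y|}$ used there by a weighted version that additionally produces the factor $\ell^{\alpha}$.

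First I would apply Jensen's inequality exactly as in \Cref{lem:poincare_type}: writing $v(x)-\overline v=\tfrac{1}{|U|}\int_U\bigl(v(x)-v(y)\bigr)\,dy$ and using the convexity of $t\mapsto t^2$,
\begin{align*}
\|v-\overline v\|_{L^2(U)}^2=\int_U|v(x)-\overline v|^2\,dx\le \frac{1}{|U|}\int_U\int_U|v(x)-v(y)|^2\,dy\,dx.
\end{align*}
The key observation is then that the map $t\mapsto t\,\ell^{\alpha}(t)$ is non-decreasing on $(0,\infty)$, being a product of the non-negative non-decreasing functions $t\mapsto t$ and $t\mapsto\ell^{\alpha}(t)$ (recall from \eqref{ell:def} that $\ell$ is non-decreasing and positive). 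Hence for $x,y\in U$ with $x\ne y$ one has $0<|x-y|\le|U|$, so $|x-y|\,\ell^{\alpha}(|x-y|)\le|U|\,\ell^{\alpha}(|U|)$ and therefore
\begin{align*}
\frac{1}{|U|}\le\frac{\ell^{\alpha}(|U|)}{|x-y|\,\ell^{\alpha}(|x-y|)}.
\end{align*}

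Finally I would insert this pointwise bound into the double integral above to conclude
\begin{align*}
\|v-\overline v\|_{L^2(U)}^2\le \ell^{\alpha}(|U|)\int_U\int_U\frac{|v(x)-v(y)|^2}{|x-y|\,\ell^{\alpha}(|x-y|)}\,dy\,dx,
\end{align*}
which is the assertion; the right-hand side is finite precisely because $v\in\mathbb H^{\alpha}(U;\ell)$ (see \eqref{H:weights:def}). I do not expect any genuine obstacle here: the whole argument rests on the elementary monotonicity of $t\mapsto t\,\ell^{\alpha}(t)$, and the only minor point to keep in mind is that $\ell$ is being evaluated at $|U|$, which is harmless since $\ell$ is defined and bounded on all of $(0,\infty)$ by \eqref{ell:def}.
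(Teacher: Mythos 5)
Your proposal is correct and follows essentially the same route as the paper: Jensen's inequality followed by the pointwise bound $\tfrac{1}{|U|}\le \tfrac{\ell^{\alpha}(|U|)}{|x-y|\,\ell^{\alpha}(|x-y|)}$, which the paper obtains in two steps (first $|x-y|\le |U|$, then $\ell^{\alpha}(|x-y|)\le\ell^{\alpha}(|U|)$) rather than via the combined monotonicity of $t\mapsto t\,\ell^{\alpha}(t)$. The two arguments are equivalent.
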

\begin{proof}
By Jensen's inequality,
\begin{align*}
    \int_{U}|v(x)-\overline{v}|^2dx
    &=\frac{1}{|U|^2}\int_{U}\left|\int_{U}(v(x)-v(y))dy\right|^2dx \leq \frac{1}{|U|}\int_{U}\int_{U}|v(x)-v(y)|^2dy dx\\
&\leq \int_{U}\int_{U}\frac{|v(x)-v(y)|^2}{|x-y|}dy dx \leq \ell^{\alpha}(|U|)\int_{U}\int_{U}\frac{|v(x)-v(y)|^2}{|x-y|\ell^{\alpha}(|x-y|)}dy dx.
\end{align*}
This ends the proof. 
\end{proof}

We now introduce some additional notation for the localization of the norms. Let
\begin{align*}
 T_i=((i-2)h,(i+3)h)\cap(0,L).
 \end{align*}
Notice that $T_i=\cup_{i\in I_i} \operatorname{supp}(\varphi_i)$.

\begin{lemma}\label{lem:stab_dif_interp_weight}
Let $\alpha>0$, $i\in\inter{0,N}$, $v\in \weH{\alpha}$, then
\begin{align}
    \int_{\Omega_i}\int_{S_i}\frac{|(v-I_h v)(x)-(v-I_h v)(y)|^2}{|x-y|}dy dx &\leq C  \ell^{\alpha}(h)\int_{T_i}\int_{T_{i}}\frac{|v(x)-v(y)|^2}{|x-y|\ell^{\alpha}(|x-y|)}dy dx, \label{eq:approxim_estimate_weight}\\
    \|v-I_h v\|_{L^2(\Omega_i)}^2&\leq C\ell^{\alpha}(h)\int_{T_{i}}\int_{T_{i}}\frac{|v(x)-v(y)|^2}{|x-y|\ell^{\alpha}(|x-y|)}dy dx.\label{a4}
\end{align}
\end{lemma}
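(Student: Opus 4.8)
The plan is to mirror the proof of Lemma~\ref{lem:stab_dif_interp} (and the $L^2$-variant in Remark~\ref{rem:l2_stability}), replacing the crude mean-value/Poincaré estimates by their log-weighted analogues, namely Lemma~\ref{lem:poincare_type_weight} in place of Lemma~\ref{lem:poincare_type}. First, fix $\alpha>0$, $i\in\inter{0,N}$, $v\in\weH{\alpha}$, and set $\overline v:=\frac{1}{|T_i|}\int_{T_i}v\,dx$ (a constant). Since $\operatorname{supp}(\varphi_k)\subset T_i$ for all $k\in I_i$, and $\Omega_i\cup S_i\subset T_i$, by \eqref{Ihc} we may write $v-I_h v = (v-\overline v) + I_h(\overline v - v)$ on $\Omega_i$ and $S_i$. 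Squaring and using the triangle inequality as in \eqref{eq:est_dif_inter_1}, the left-hand side of \eqref{eq:approxim_estimate_weight} is bounded by $2J_1+2J_2$, where $J_1=\int_{\Omega_i}\int_{S_i}\frac{|v(x)-v(y)|^2}{|x-y|}dy\,dx$ (the $\overline v$ terms cancel) and $J_2$ is the corresponding expression for $I_h(\overline v-v)$.

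For $J_1$, since $\Omega_i\subset T_i$, $S_i\subset T_i$, and $|x-y|\le 3h$ on that domain, monotonicity of $\ell$ gives $J_1\le \ell^{\alpha}(3h)\int_{T_i}\int_{T_i}\frac{|v(x)-v(y)|^2}{|x-y|\ell^{\alpha}(|x-y|)}dy\,dx$, and by semi-homogeneity (Lemma~\ref{prop1}) $\ell^{\alpha}(3h)\le C\ell^{\alpha}(h)$, which is the desired form. For $J_2$, apply item (i) of Lemma~\ref{lem:loc_norm_interp} with the coefficients $b_k:=\frac{1}{h}\int_{\Omega}(v-\overline v)\varphi_k\,dx$ to get $J_2\le Ch\sum_{k\in I_i} b_k^2$. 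Exactly as in \eqref{abk}, Cauchy--Schwarz and $|\varphi_k|\le 1$ give $|b_k|^2\le \frac{C}{h}\int_{T_i}|v-\overline v|^2\,dx$ for each $k\in I_i$; since $|I_i|\le 4$, we obtain $J_2\le C\int_{T_i}|v-\overline v|^2\,dx$. Now Lemma~\ref{lem:poincare_type_weight} applied on $U=T_i$ yields $\int_{T_i}|v-\overline v|^2\,dx\le \ell^{\alpha}(|T_i|)\int_{T_i}\int_{T_i}\frac{|v(x)-v(y)|^2}{|x-y|\ell^{\alpha}(|x-y|)}dy\,dx$, and again $\ell^{\alpha}(|T_i|)\le \ell^{\alpha}(5h)\le C\ell^{\alpha}(h)$ by Lemma~\ref{prop1}. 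Combining the bounds on $J_1$ and $J_2$ establishes \eqref{eq:approxim_estimate_weight}.

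For \eqref{a4}, the argument is the same but shorter: write $v-I_h v=(v-\overline v)+I_h(\overline v - v)$ on $\Omega_i$, so $\|v-I_hv\|_{L^2(\Omega_i)}^2\le 2\|v-\overline v\|_{L^2(\Omega_i)}^2+2\|I_h(\overline v-v)\|_{L^2(\Omega_i)}^2$. The first term is $\le 2\|v-\overline v\|_{L^2(T_i)}^2$, controlled by Lemma~\ref{lem:poincare_type_weight} as above. For the second, item (ii) of Lemma~\ref{lem:loc_norm_interp} gives $\|I_h(\overline v-v)\|_{L^2(\Omega_i)}^2\le Ch\sum_{k\in I_i'} b_k^2$, and the same estimate $|b_k|^2\le \frac{C}{h}\int_{T_i}|v-\overline v|^2\,dx$ together with Lemma~\ref{lem:poincare_type_weight} and Lemma~\ref{prop1} closes the bound.

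The only mildly delicate points are bookkeeping ones: one must choose the averaging set large enough to contain $\Omega_i$, $S_i$, \emph{and} the supports of all $\varphi_k$ with $k\in I_i$ — this is exactly why $T_i$ (rather than $S_i$) is the natural domain, as noted in the remark $T_i=\bigcup_{k\in I_i}\operatorname{supp}(\varphi_k)$ preceding the lemma — and one must handle the boundary indices $i\in\{0,N\}$, where $T_i$ is truncated by $(0,L)$, but this only shrinks the domain and all monotonicity estimates still go through. No genuinely new idea beyond the substitution of the log-weighted Poincaré inequality is required; the main obstacle, such as it is, is simply ensuring every truncated interval appearing is contained in $T_i$ so that the single average $\overline v$ can be used uniformly in both the interpolation-difference and the raw-difference terms.
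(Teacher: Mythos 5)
Your proposal is correct and follows exactly the route the paper takes: it repeats the decomposition $v-I_hv=(v-\overline v)+I_h(\overline v-v)$ from Lemma~\ref{lem:stab_dif_interp} (and Remark~\ref{rem:l2_stability} for the $L^2$ bound), estimates $J_1$ by monotonicity of $\ell$ on $|x-y|\leq 3h$ together with Lemma~\ref{prop1}, and estimates $J_2$ via Lemma~\ref{lem:poincare_type_weight} on $T_i$. Your write-up is in fact more detailed than the paper's two-line proof, and the bookkeeping (averaging over $T_i$, truncation at $i\in\{0,N\}$) is handled correctly.
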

\begin{proof}
 The proof is analogous to the one of \Cref{lem:stab_dif_interp}, where $J_1$ is estimated as
\begin{equation}\label{eq:est_J1_weight}
    J_1 = \int_{\Omega_i}\int_{S_i}\frac{|v(x)-v(y)|^2}{|x-y|}dy dx \leq C\ell^{\alpha}(h)\int_{T_i}\int_{T_i}\frac{|v(x)-v(y)|^2}{|x-y|\ell^{\alpha}(|x-y|)}dy dx
\end{equation}
and $J_2$ is estimated with \Cref{lem:poincare_type_weight}.
\end{proof}

Recall the definition of $\|\cdot\|_{\mathbb H^{\beta}(\Omega;\ell)}$ given in \eqref{Hbetanorm}.
\begin{lemma}\label{new:lem}
For every $\beta>0$, there is $C=C(\beta)>0$ such that
\begin{align}\label{bdr:est}
 \int_{0}^h |I_h u(x)|^2 \ell^{-\beta}(x)\, dx\leq C\|u\|^2_{\mathbb H^{\beta}(\Omega;\ell)}.
\end{align}
In particular,  $\sum_{i\in\{0,N\}}\int_{\Omega_i}|(u-I_h u)(x)|^2\ell^{-1}(d(x))\,dx\leq C\ell^a(h)\|u\|^2_{\mathbb H^{1+a}(\Omega;\ell)}$ for every $a>0$.
\end{lemma}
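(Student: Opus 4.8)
The plan is to prove the pointwise-type bound \eqref{bdr:est} first and then to deduce the "in particular" statement by applying it to $u - I_h u$ after replacing $u$ with $u - \overline{u}$ on the boundary cell, which is legitimate because of the constant-preservation property \eqref{Ihc}. For \eqref{bdr:est}, recall that on $\Omega_0 = (0,h)$ we have $I_h u(x) = a_0\varphi_0(x) + a_1\varphi_1(x)$ with $|a_0|, |a_1| \le C h^{-1/2}\|u\|_{L^2(S_k)}$ by \eqref{eq:est_ak_H} (and $S_0, S_1 \subset (0,3h)$), and $0 \le \varphi_k \le 1$. Hence $|I_h u(x)|^2 \le C(a_0^2 + a_1^2) \le C h^{-1}\|u\|_{L^2((0,3h))}^2$ uniformly on $(0,h)$. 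Since $d(x) = x$ on $\Omega_0$ and $\ell$ is nondecreasing, $\int_0^h \ell^{-\beta}(x)\,dx$ is finite (here $\ell^{-\beta}(\rho) = |\ln\min(\rho_0,\rho)|^\beta$ grows only like a power of $|\ln\rho|$, which is integrable near $0$), and in fact $\int_0^h \ell^{-\beta}(x)\,dx \le C h\,\ell^{-\beta}(h)$ for $h$ small by monotonicity-type estimates on $\ell$ (this is the content of the auxiliary integral bounds, cf.\ the use of Lemma~\ref{lem:ellbeta}). Combining, $\int_0^h |I_h u(x)|^2\ell^{-\beta}(x)\,dx \le C h^{-1}\|u\|_{L^2((0,3h))}^2 \cdot h\,\ell^{-\beta}(h) \le C\ell^{-\beta}(h)\|u\|_{L^2((0,3h))}^2$. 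It then remains to absorb the factor $\ell^{-\beta}(h)\|u\|_{L^2((0,3h))}^2$ into $\|u\|_{\mathbb H^\beta(\Omega;\ell)}^2$; since $u = 0$ outside $\Omega$, on the cell near the boundary one has $\|u\|_{L^2((0,3h))}^2 = \int_0^{3h}|u(x) - u(0)|^2\,dx \le \ell^\beta(3h)\int_0^{3h}|u(x)|^2\ell^{-\beta}(d(x))\,dx$ roughly speaking (using $d(x) \le |x - 0|$ and the weighted $L^2$ part of the $\mathbb H^\beta$-norm, together with Lemma~\ref{prop1} to compare $\ell^\beta(3h)$ and $\ell^\beta(h)$), which kills the $\ell^{-\beta}(h)$ and leaves $C\|u\|_{\mathbb H^\beta(\Omega;\ell)}^2$. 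The cell near $x = L$ is handled symmetrically.

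For the "in particular" claim, fix $a > 0$ and apply the argument above on $\Omega_0$ with $u$ replaced by $u - I_h u$; but $I_h(u-I_h u)$ is not simply related to $I_h u$, so instead I would argue directly on $\Omega_0$: write $u - I_h u = (u - \overline u) + I_h(\overline u - u)$ with $\overline u := \tfrac{1}{5h}\int_{x_0}^{x_0+5h} u\,dx = \tfrac{1}{5h}\int_0^{5h} u$ (this is the analogue of the averaging used in Lemma~\ref{lem:stab_dif_interp}), using \eqref{Ihc}. Then
\begin{align*}
\int_{\Omega_0}|(u - I_h u)(x)|^2 \ell^{-1}(d(x))\,dx
&\le 2\int_{\Omega_0}|u(x) - \overline u|^2 \ell^{-1}(x)\,dx + 2\int_{\Omega_0}|I_h(u - \overline u)(x)|^2\ell^{-1}(x)\,dx.
\end{align*}
The second term is handled by \eqref{bdr:est} with $\beta = 1$ applied to $u - \overline u$, giving $C\|u - \overline u\|_{\mathbb H^1(\Omega;\ell)}^2$; one then notes $\|u - \overline u\|_{\mathbb H^1(\Omega;\ell)}^2 \le C\ell^a(h)\|u\|_{\mathbb H^{1+a}(\Omega;\ell)}^2$ by comparing the two weighted seminorms on the small set $T_0$ via the argument of Lemma~\ref{lem:stab_dif_interp_weight}/Lemma~\ref{lem:poincare_type_weight} (the Gagliardo part gains a factor $\ell^a(h)$ and the $L^2$-weighted part of $u-\overline u$ is controlled by the Gagliardo part through the Poincaré-type Lemma~\ref{lem:poincare_type_weight}). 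The first term is a weighted Poincaré-type bound on $(0,h)$: $\int_0^h|u(x) - \overline u|^2\ell^{-1}(x)\,dx \le \ell^{-1}(h)\|u - \overline u\|_{L^2((0,5h))}^2 \le C\ell^{-1}(h)\ell^{1+a}(5h)\|u\|_{\mathbb H^{1+a}((0,5h);\ell)}^2 \le C\ell^{a}(h)\|u\|_{\mathbb H^{1+a}(\Omega;\ell)}^2$, where the middle inequality is Lemma~\ref{lem:poincare_type_weight} with exponent $1+a$ and I have used Lemma~\ref{prop1} to compare moduli at scales $h$ and $5h$. Summing the $\Omega_0$ and $\Omega_N$ contributions gives the stated estimate.

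The main obstacle I anticipate is purely bookkeeping with the logarithmic weight $\ell$: one must repeatedly convert between $\ell$ evaluated at the mesh scale $h$, at multiples like $3h$ or $5h$, and at the distance-to-boundary $d(x) = x$ on the first cell, and verify that $\int_0^h \ell^{-\beta}(x)\,dx \asymp h\,\ell^{-\beta}(h)$ for small $h$ and $\beta > 0$. These comparisons rely entirely on Lemma~\ref{prop1} (semi-homogeneity) and on the elementary integral estimates for $\ell$ (the bounds referenced as \eqref{lem:int:bds} and Lemma~\ref{lem:ellbeta}); none of it is deep, but the exponents ($\beta$ vs.\ $1$ vs.\ $1+a$, and the loss/gain of $\ell^a(h)$) must be tracked carefully so that the final power of $\ell(h)$ comes out to exactly $\ell^a(h)$ and the right-hand side norm is exactly $\|u\|_{\mathbb H^{1+a}(\Omega;\ell)}^2$ with no residual $h$-dependent factor.
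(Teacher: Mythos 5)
Your proof of \eqref{bdr:est} is essentially the paper's argument: you bound $|I_h u|$ on $(0,h)$ through the coefficients $a_0,a_1$, use $\int_0^h\ell^{-\beta}(x)\,dx\leq Ch\,\ell^{-\beta}(h)$ (Lemma~\ref{lem:ellbeta}), and absorb the resulting factor $\ell^{-\beta}(h)\|u\|^2_{L^2((0,3h))}$ into the weighted $L^2$ part of $\|u\|^2_{\mathbb H^{\beta}(\Omega;\ell)}$ via $\ell^{\beta}(d(x))\leq\ell^{\beta}(3h)\leq C\ell^{\beta}(h)$. That part is correct and matches the paper (which runs Cauchy--Schwarz on $\int u\varphi_k$ directly instead of quoting Lemma~\ref{lem:bounds_aks}, but the content is the same).

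The ``in particular'' part, however, has a genuine gap. Your first inequality, $\int_0^h|u(x)-\overline u|^2\ell^{-1}(x)\,dx\leq \ell^{-1}(h)\|u-\overline u\|^2_{L^2((0,5h))}$, goes the wrong way: $\ell^{-1}(x)=|\ln(\min(\rho_0,x))|$ is \emph{non-increasing} and unbounded as $x\to0^+$, so on $(0,h)$ its infimum (not its supremum) equals $\ell^{-1}(h)$, and the supremum is $+\infty$; since $u-\overline u$ does not vanish near $0$ in general, you cannot pull the weight out of the integral at the value $\ell^{-1}(h)$. A second problem is the claim $\|u-\overline u\|^2_{\mathbb H^{1}(\Omega;\ell)}\leq C\ell^{a}(h)\|u\|^2_{\mathbb H^{1+a}(\Omega;\ell)}$: the Gagliardo parts of $u$ and $u-\overline u$ coincide, and comparing the weights $\ell^{-1}(|x-y|)$ and $\ell^{-1-a}(|x-y|)$ over all of $\Omega\times\Omega$ only gains a factor $\ell^{a}(\operatorname{diam}\Omega)$, not $\ell^{a}(h)$; the gain of $\ell^{a}(h)$ is available only for pairs with $|x-y|\lesssim h$, which is not the quantity that \eqref{bdr:est} applied to $u-\overline u$ actually consumes. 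The averaging detour is in fact unnecessary: the paper's proof observes that on $\Omega_0\cup\Omega_N$ one has $d(x)\leq h$, hence $\ell^{-1}(d(x))=\ell^{a}(d(x))\,\ell^{-1-a}(d(x))\leq \ell^{a}(h)\,\ell^{-1-a}(d(x))$, and then bounds
\begin{align*}
\int_{\Omega_i}|u-I_hu|^2\ell^{-1-a}(d(x))\,dx\leq 2\int_{\Omega_i}|u|^2\ell^{-1-a}(d(x))\,dx+2\int_{\Omega_i}|I_hu|^2\ell^{-1-a}(d(x))\,dx,
\end{align*}
where the first term is part of $\|u\|^2_{\mathbb H^{1+a}(\Omega;\ell)}$ by definition and the second is exactly \eqref{bdr:est} with $\beta=1+a$. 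Your argument can be repaired (e.g.\ split $|u-\overline u|^2\leq 2|u|^2+2|\overline u|^2$ and handle the constant via $\int_0^h\ell^{-1}(x)\,dx\leq Ch\,\ell^{-1}(h)$ together with an average bound on $|\overline u|^2$), but the repair essentially collapses into the paper's one-line proof, so I would adopt that route directly.
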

\begin{proof}
In this proof, $C$ denotes possibly different positive constants independent of $h$ and $u$. Note that
\begin{align*}
 \int_{0}^h |I_h u(x)|^2 \ell^{-\beta}(x)\, dx
 \leq 2\int_{0}^h (a_0^2\varphi_0^2+a_1^2\varphi_1^2) \ell^{-\beta}(x)\, dx,
\end{align*}
where, by Cauchy-Schwarz inequality,
\begin{align*}
 a_0^2
 =\frac{C}{h^2}\left(\int_0^h u\varphi_0\, dx \right)^2
 \leq C\frac{\|\varphi_0\|^2_{L^2(\Omega_0)}}{h^2}\|u\|^2_{L^2(\Omega_0)}
 \leq C\frac{1}{h}\ell^\beta(h)\int_0^h|u(x)|^2\ell^{-\beta}(x)\, dx
 \leq C\frac{\ell^\beta(h)}{h}\|u\|^2_{\mathbb H^{\beta}(\Omega;\ell)}.
\end{align*}
Similarly, $a_1^2=\frac{C}{h^2}\left(\int_0^{2h} u\varphi_1\, dx \right)^2\leq C\frac{\ell^\beta(h)}{h}\|u\|^2_{\mathbb H^{\beta}(\Omega;\ell)}$. Observe that, by Lemma \ref{lem:ellbeta},
\begin{align*}
 \int_{0}^h a_0^2\varphi_0^2\ell^{-\beta}(x)\, dx
 \leq a_0^2\int_{0}^h \ell^{-\beta}(x)\, dx
 \leq C\frac{\ell^\beta(h)}{h}\|u\|^2_{\mathbb H^{\beta}(\Omega;\ell)} h \ell^{-\beta}(h)=C\|u\|^2_{\mathbb H^{\beta}(\Omega;\ell)}.
\end{align*}
Analogously, $\int_{0}^{h} a_1^2\varphi_1^2\ell^{-\beta}(x)\, dx \leq C\|u\|^2_{\mathbb H^{\beta}(\Omega;\ell)},$ and \eqref{bdr:est} follows. By the definition of $\|u\|^2_{\mathbb H^{1}(\Omega;\ell)}$, \eqref{bdr:est}, the fact that
$\ell^{\alpha}(d(x))\leq \ell^{\alpha}(h)$ for $x\in\Omega_i$ with $i\in\{0,N\}$, and arguing by  symmetry, we easily deduce that
\begin{align*}
\int_{\Omega_i}|(u-I_h u)(x)|^2\ell^{-1}(d(x))\,dx
 \leq \ell^a(h)\int_{\Omega_i}|(u-I_h u)(x)|^2\ell^{-1-a}(d(x))\,dx
 \leq C \ell^a(h)\|u\|^2_{\mathbb H^{1+a}(\Omega;\ell)}
\end{align*}
for $i\in\{0,N\}$, as claimed.
\end{proof}

\begin{proposition}\label{prop:est_interpolator}
Let $\alpha>0$ and $u\in\weH{1+\alpha}$, then
\begin{align}\label{eq:interp_final}
    \|u-I_hu\|^2_{\mathbb H(\Omega)}&\leq C\ell^{\alpha}(h) \|u\|^2_{\mathbb H^{1+\alpha}(\Omega;\ell)},
\end{align}
\end{proposition}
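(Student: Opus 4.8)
The plan is to combine the localization of the $\mathbb{H}$-norm (Lemma~\ref{lem:localization_enorm}) with the weighted quasi-interpolation estimates obtained just above and a finite-overlap counting argument. First I would apply Lemma~\ref{lem:localization_enorm} to the function $u - I_h u \in \mathbb{H}(\Omega)$, which gives
\begin{align*}
\|u-I_hu\|^2_{\mathbb H(\Omega)} &\leq \sum_{i\in\inter{0,N}}\int_{\Omega_i}\int_{S_i}\frac{|(u-I_hu)(x)-(u-I_hu)(y)|^2}{|x-y|}dy\,dx \\
&\quad + C\ell^{-1}(h)\sum_{i\in\inter{0,N}}\int_{\Omega_i}|(u-I_hu)(x)|^2\,dx + C\sum_{i\in\{0,N\}}\int_{\Omega_i}|(u-I_hu)(x)|^2\ell^{-1}(d(x))\,dx.
\end{align*}
The three terms on the right are then handled separately.

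For the first (bilinear) sum I would invoke \eqref{eq:approxim_estimate_weight} from Lemma~\ref{lem:stab_dif_interp_weight} with the exponent $1+\alpha$ in place of $\alpha$, bounding each summand by $C\ell^{1+\alpha}(h)\int_{T_i}\int_{T_i}\frac{|u(x)-u(y)|^2}{|x-y|\ell^{1+\alpha}(|x-y|)}dy\,dx$. Since each $T_i=((i-2)h,(i+3)h)\cap(0,L)$ overlaps only a bounded number (independent of $h$) of the other $T_j$'s, summing over $i$ produces at most a fixed multiple of the full double integral $\int_\Omega\int_\Omega\frac{|u(x)-u(y)|^2}{|x-y|\ell^{1+\alpha}(|x-y|)}dy\,dx \leq \|u\|^2_{\mathbb H^{1+\alpha}(\Omega;\ell)}$, so this contributes $C\ell^{1+\alpha}(h)\|u\|^2_{\mathbb H^{1+\alpha}(\Omega;\ell)} \leq C\ell^{\alpha}(h)\|u\|^2_{\mathbb H^{1+\alpha}(\Omega;\ell)}$ (using $\ell\leq 1$). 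For the second ($L^2$) sum I would similarly use \eqref{a4} with exponent $1+\alpha$ and the same finite-overlap argument, with an extra factor $\ell^{-1}(h)$; since $\ell^{-1}(h)\ell^{1+\alpha}(h)=\ell^{\alpha}(h)$, this term is also bounded by $C\ell^{\alpha}(h)\|u\|^2_{\mathbb H^{1+\alpha}(\Omega;\ell)}$. The third (boundary) sum is exactly the quantity estimated in the second assertion of Lemma~\ref{new:lem} (with $a=\alpha$), giving $C\ell^{\alpha}(h)\|u\|^2_{\mathbb H^{1+\alpha}(\Omega;\ell)}$.

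Adding the three contributions yields \eqref{eq:interp_final}. I expect the main subtlety to be the finite-overlap/counting step: one must check that the localized patches $T_i$ (and $S_i$) have bounded overlap uniformly in $h$ so that summing the local estimates reconstructs the global weighted seminorm up to a universal constant, and one must be careful that the weight $\ell^{1+\alpha}(|x-y|)$ appearing in the local bounds genuinely dominates (via the semi-homogeneity Lemma~\ref{prop1}, if needed, although here $|x-y|$ is controlled by $5h$ so the comparison with $\ell^{1+\alpha}(h)$ is direct) what is needed for the global integral. A secondary point is reconciling the exponents: the localized estimates are naturally stated with a generic exponent, and one applies them at level $1+\alpha$, after which the combination $\ell^{-1}(h)\cdot\ell^{1+\alpha}(h)=\ell^{\alpha}(h)$ and the crude bound $\ell^{1+\alpha}(h)\leq\ell^{\alpha}(h)$ deliver the single power $\ell^{\alpha}(h)$ claimed in the statement.
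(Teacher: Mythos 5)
Your proposal is correct and follows essentially the same route as the paper: localization via Lemma~\ref{lem:localization_enorm} applied to $u-I_hu$, the weighted estimates \eqref{eq:approxim_estimate_weight} and \eqref{a4} at exponent $1+\alpha$ (so that $\ell^{-1}(h)\ell^{1+\alpha}(h)=\ell^{\alpha}(h)$), Lemma~\ref{new:lem} for the boundary term, and the finite-overlap count for the patches $T_i$ (which the paper carries out explicitly in \eqref{eq:est_aprox_omega}, with overlap constant $5$). No gaps.
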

\begin{proof}
Using \Cref{lem:localization_enorm}, we have that
\begin{align*}
    \|u-I_h u\|^2_{\mathbb H(\Omega)}
    &\leq \sum_{i\in\inter{0,N}}\int_{\Omega_i}\int_{S_i}\frac{|(u-I_h u)(x)-(u-I_h u)(y)|^2}{|x-y|}dy\,dx\\
    &+C\ell^{-1}(h)\sum_{i\in\inter{0,N}}\int_{\Omega_i}|(u-I_h u)(x)|^2\,dx +C\sum_{i\in\{0,N\}}\int_{\Omega_i}|(u-I_h u)(x)|^2\ell^{-1}(d(x))\,dx.
\end{align*}
Using estimates \eqref{eq:approxim_estimate_weight}, \eqref{a4}, and Lemma \ref{new:lem},
\begin{align*}
 \sum_{i\in\inter{0,N}}\int_{\Omega_i}\int_{S_i}\frac{|(u-I_h u)(x)-(u-I_h u)(y)|^2}{|x-y|}dy\,dx &\leq
 \ell^{1+\alpha}(h)\sum_{i\in\inter{0,N}}\int_{T_i}\int_{T_i}\frac{|u(x)-u(y)|^2}{|x-y|\ell^{1+\alpha}(|x-y|)}dy dx,
 \\
\ell^{-1}(h)\sum_{i\in\inter{0,N}}\int_{\Omega_i}|(u-I_h u)(x)|^2\,dx &\leq \ell^{a}(h)\sum_{i\in\inter{0,N}}\int_{T_i}\int_{T_i}\frac{|u(x)-u(y)|^2}{|x-y|\ell^{1+\alpha}(|x-y|)}dy dx,\\
 \sum_{i\in\{0,N\}}\int_{\Omega_i}|(u-I_h u)(x)|^2\ell^{-1}(d(x))\,dx
 &\leq C \ell^a(h)\|u\|^2_{\mathbb H^{1+a}(\Omega;\ell)}.
\end{align*}
Thus,
\begin{align}
    \|u-I_h u\|^2_{\mathbb H(\Omega)}&\leq
    C \ell^{\alpha}(h)\sum_{i\in\inter{0,N}}\int_{T_i}\int_{T_i}\frac{|u(x)-u(y)|^2}{|x-y|\ell^{1+\alpha}(|x-y|)}dy dx+C\ell^{\alpha}(h)\|u\|^2_{\mathbb H^{1+a}(\Omega;\ell)}.\label{eq:est_aprox_sum}
\end{align}
Let $x_i:=x_0=0$ if $i<0$ and $x_i:=x_{N+1}=L$ if $i>N+1$.  Then,
\begin{align}\notag
   \sum_{i\in\inter{0,N}} &\int_{T_i}\int_{T_i}\frac{|u(x)-u(y)|^2}{|x-y|\ell^{1+\alpha}(|x-y|)}dy dx
   \leq \sum_{i\in\inter{0,N}} \int_{x_{i-2}}^{x_{i+3}}\int_{\Omega}\frac{|u(x)-u(y)|^2}{|x-y|\ell^{1+\alpha}(|x-y|)}dy dx\notag\\
   &\leq \sum_{i\in\inter{0,N}} \sum_{j\in \inter{i-2,i+3}}\int_{\Omega_j}\int_{\Omega}\frac{|u(x)-u(y)|^2}{|x-y|\ell^{1+\alpha}(|x-y|)}dy dx\leq 5\int_{\Omega}\int_{\Omega} \frac{|u(x)-u(y)|^2}{|x-y|\ell^{1+\alpha}(|x-y|)}dy dx. \label{eq:est_aprox_omega}
\end{align}
The claim now follows from \eqref{eq:est_aprox_sum} and \eqref{eq:est_aprox_omega}.
\end{proof}

\section{Stability of the error}\label{sec:error}

We begin by recalling the variational formulation associated to our problem: find $u\in\mathbb H(\Omega)$ such that 
\begin{equation}\label{eq:cont_problem}
    \cE_{L}(u,v)=\int_{\Omega}f v \quad \text{ for all } v\in \mathbb H(\Omega)
\end{equation}
with $\cE_{L}(u,v)=\cE(u,v)+B(u,v)$ and where we recall \eqref{eq:bilinear_B}.

Our goal is to approximate the solution $u$ to \eqref{eq:cont_problem} by solving the following discrete problem: find $u_h\in \mathcal V_h$ such that
\begin{equation}\label{eq:discr_problem}
    \cE_{L}(u_h,v)=\int_{\Omega} f v \quad \text{ for all } v\in \mathcal V_h.
\end{equation}

Unlike the case of the fractional Laplacian, we observe that the bilinear form $\cE_{L}(u,u)$ is not coercive in $\mathbb H(\Omega)$ so we cannot employ the classical Cea's lemma. Instead, we shall use the following variant which employs a discrete inf-sup condition. Some of the arguments shown below are well-known and standard in other settings, but we include proofs for completeness.
\begin{lemma}\label{lem:cea_mg}
Assume that there exists a constant $\alpha_0>0$ uniform with respect to $h$ such that
\begin{equation}\label{eq:discr_inf_sup}
    \sup_{v_h\in \mathcal V_h}\frac{\cE_{L}(z_h,v_h)}{\|v_h\|_{\mathbb H(\Omega)}}\geq \alpha_0\|z_h\|_{\mathbb H(\Omega)} \quad\text{ for all } z_h\in\mathcal V_h,
\end{equation}
then there is a unique solution $u_h\in \mathcal V_h$ to \eqref{eq:discr_problem}. Moreover, this solution satisfies
\begin{equation}\label{eq:stability_cea}
    \|u-u_h\|_{\mathbb H(\Omega)}\leq \left(1+\frac{C}{\alpha_0}\right)\inf_{w_h\in\mathcal V_h}\|u-w_h\|_{\mathbb H(\Omega)},
\end{equation}
where $C>0$ is uniform with respect to $h$.
\end{lemma}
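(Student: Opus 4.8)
The plan is to prove this as a standard consequence of the Banach–Nečas–Babuška (BNB) theory, which generalizes the Lax–Milgram/Céa framework to noncoercive but inf-sup stable bilinear forms. I first note that $\cE_L$ is bounded on $\mathbb H(\Omega)\times\mathbb H(\Omega)$: indeed $\cE$ is the inner product, and $B$ is a lower-order perturbation controlled by the compact embedding \eqref{c:em}, so $|\cE_L(u,v)|\le C\|u\|_{\mathbb H(\Omega)}\|v\|_{\mathbb H(\Omega)}$ for some $C>0$ independent of $h$. Combined with the assumed discrete inf-sup condition \eqref{eq:discr_inf_sup}, this gives well-posedness of \eqref{eq:discr_problem}: since $\mathcal V_h$ is finite-dimensional, the inf-sup condition says the linear map $z_h\mapsto \cE_L(z_h,\cdot)$ is injective on $\mathcal V_h$, hence bijective by dimension count, so \eqref{eq:discr_problem} has a unique solution $u_h$.

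For the error bound \eqref{eq:stability_cea}, I would use the classical Galerkin-orthogonality-plus-best-approximation argument adapted to the noncoercive setting. Fix an arbitrary $w_h\in\mathcal V_h$ and set $z_h:=u_h-w_h\in\mathcal V_h$. By \eqref{eq:discr_inf_sup}, for any $\delta>0$ there is $v_h\in\mathcal V_h$ with $\|v_h\|_{\mathbb H(\Omega)}=1$ and $\cE_L(z_h,v_h)\ge (\alpha_0-\delta)\|z_h\|_{\mathbb H(\Omega)}$ (or work with a supremum directly). Now the key point is Galerkin orthogonality: since both \eqref{eq:cont_problem} (restricted to $v\in\mathcal V_h\subset\mathbb H(\Omega)$) and \eqref{eq:discr_problem} hold with the same right-hand side, we get $\cE_L(u-u_h,v_h)=0$ for all $v_h\in\mathcal V_h$. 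Therefore
\begin{align*}
\cE_L(z_h,v_h)=\cE_L(u_h-w_h,v_h)=\cE_L(u-w_h,v_h)\le C\|u-w_h\|_{\mathbb H(\Omega)}\|v_h\|_{\mathbb H(\Omega)}=C\|u-w_h\|_{\mathbb H(\Omega)},
\end{align*}
using boundedness of $\cE_L$. Combining the two displays yields $\|z_h\|_{\mathbb H(\Omega)}\le \frac{C}{\alpha_0}\|u-w_h\|_{\mathbb H(\Omega)}$ (after letting $\delta\to 0$). Finally, by the triangle inequality,
\begin{align*}
\|u-u_h\|_{\mathbb H(\Omega)}\le \|u-w_h\|_{\mathbb H(\Omega)}+\|w_h-u_h\|_{\mathbb H(\Omega)}=\|u-w_h\|_{\mathbb H(\Omega)}+\|z_h\|_{\mathbb H(\Omega)}\le\Big(1+\frac{C}{\alpha_0}\Big)\|u-w_h\|_{\mathbb H(\Omega)}.
\end{align*}
Taking the infimum over $w_h\in\mathcal V_h$ gives \eqref{eq:stability_cea}.

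The only subtlety — and the step I would be most careful about — is verifying that $\cE_L$ is bounded on $\mathbb H(\Omega)\times\mathbb H(\Omega)$ with a constant independent of $h$, i.e., controlling the nonlocal term $B(u,v)$ in \eqref{eq:bilinear_B}. The far-field double integral $\iint_{|x-y|\ge 1}\frac{u(x)v(y)}{|x-y|}\,dx\,dy$ over the bounded region where $u,v$ are supported is bounded by $C\|u\|_{L^2(\Omega)}\|v\|_{L^2(\Omega)}$ (the kernel is bounded there and the domain of integration is bounded), and the $\rho_1\int uv$ term is likewise $L^2$-controlled; then the compact (in particular continuous) embedding \eqref{c:em} converts this into a bound by $C\|u\|_{\mathbb H(\Omega)}\|v\|_{\mathbb H(\Omega)}$. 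Since $\cE(u,v)\le\|u\|_{\mathbb H(\Omega)}\|v\|_{\mathbb H(\Omega)}$ by Cauchy–Schwarz for the inner product, boundedness of $\cE_L$ follows with a constant $C$ depending only on $\Omega$, hence uniform in $h$. Everything else is the standard BNB/Céa argument; the finite-dimensionality of $\mathcal V_h$ is what lets the inf-sup condition alone (without a test-space inf-sup) deliver existence and uniqueness.
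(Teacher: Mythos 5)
Your proposal is correct and follows essentially the same route as the paper: non-singularity of the discrete system from the inf-sup condition plus finite-dimensionality, then Galerkin orthogonality, boundedness of $\cE_L$ (which the paper obtains from \cite[Lemma 3.4]{HSS22} together with the embedding $\mathbb H(\Omega)\hookrightarrow L^2(\Omega)$, exactly the point you flag and justify directly), and the triangle inequality. No gaps.
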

\begin{proof}
Let $(\varphi_i)_{i\in \inter{0,N+1}}\subset \mathcal V_h$ be the finite dimensional basis given in \eqref{eq:def_basis}, \eqref{eq:def_basis_ext} of $\mathcal V_h$ with $K:=N+2=\dim(\mathcal V_h)$. Setting $u_h=\sum_{i\in\inter{0,N+1}}u_i \varphi_i$ and $v_h=\sum_{i\in\inter{0,N+1}} v_i \varphi_i$, we note that since $\cE_{L}(\cdot,\cdot)$ is a symmetric bilinear form, the discrete problem \eqref{eq:discr_problem} amounts to solve the linear system $A_h U = F_h,$ where $A_h\in\mathbb R^{K\times K}$ with entries $(A_h)_{ij}=\cE_L(\varphi_i,\varphi_j)$, $F\in\R^{K}$ with entries $(F)_i=\int_{\Omega}f \varphi_i$, and $U=(u_1,\ldots, u_K)^T\in \R^{K}$.

The discrete $\inf$-$\sup$ condition \eqref{eq:discr_inf_sup} readily ensures the non-singularity of the matrix $A_h$, thus $U$ can be uniquely determined and so $u_h$. On the other hand, by \eqref{eq:cont_problem} and \eqref{eq:discr_problem},
\begin{align}\label{eq:ort_cond}
    \cE_{L}(u-u_h,v_h)=0 \quad\text{ for all } v_h\in\mathcal V_h.
\end{align}
Let $w_h\in\mathcal V_h$ be an arbitrary function in $\mathcal V_h$. By \eqref{eq:discr_inf_sup} and \eqref{eq:ort_cond} we have
\begin{equation*}
    \alpha_0\|u_h-w_h\|_{\mathbb H(\Omega)} \leq \sup_{v_h\in\mathcal V_h}\frac{\cE_{L}(u_h-w_h,v_h)}{\|v_h\|_{\mathbb H}}=\sup_{v_h\in\mathcal V_h}\frac{\cE_{L}(u-w_h,v_h)}{\|v_h\|_{\mathbb H}}.
\end{equation*}
Using \cite[Lemma 3.4]{HSS22} and the compact embedding $\mathbb H(\Omega)\hookrightarrow L^2(\Omega)$, we can see that there exists $C>0$ such that $|\cE_{L}(u-w_h,v_h)|\leq C\|u-w_h\|_{\mathbb H(\Omega)}\|v_h\|_{\mathbb H(\Omega)},$ whence
\begin{align}\label{eq:est_stab_w}
    \alpha_0\|u_h-w_h\|_{\mathbb H(\Omega)} \leq C\|u-w_h\|_{\mathbb H(\Omega)} \quad\text{ for all } w_h\in\mathcal V_h.
\end{align}
Finally, triangle inequality and \eqref{eq:est_stab_w} yield that
\begin{align*}
\|u-u_h\|_{\mathbb H(\Omega)}\leq \|u-w_h\|_{\mathbb H(\Omega)}+\|w_h-u_h\|_{\mathbb H(\Omega)}\leq \left(1+\frac{C}{\alpha_0}\right)\|u-w_h\|_{\mathbb H(\Omega)}.
\end{align*}
Taking the infimum for $w_h\in \mathcal V_h$ yields the desired result.
\end{proof}

Lemma \ref{lem:cea_mg} assumes a discrete inf-sup condition \eqref{eq:discr_inf_sup}, which is equivalent to the uniform well-posedness of the discrete problems.  The following lemma gives a sufficient condition for \eqref{eq:discr_inf_sup} to hold, which boils down to the well-posedness of the continuous problem and having a sufficiently fine partition.

\begin{lemma}
Assume that the problem $L_\Delta u=0$ in $\Omega$ with $u=0$ in $\R\backslash  \Omega$ only has the trivial solution.  Then, there are $h_0>0$ and $\alpha_0>0$ such that
\begin{equation*}
    \sup_{v_h\in \mathcal V_h}\frac{\cE_{L}(u_h,v_h)}{\|v_h\|_{\mathbb H(\Omega)}}\geq \alpha_0\|u_h\|_{\mathbb H(\Omega)} \quad\text{ for all } u_h\in\mathcal V_h \text{ and }h\in(0,h_0),
\end{equation*}
where $\alpha_0$ is independent of $h$.
\end{lemma}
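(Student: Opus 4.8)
The plan is to rewrite $\cE_L$ as a compact perturbation of the scalar product $\cE$ and then to invoke the classical fact that Galerkin approximations of a compact operator converge in operator norm; the assumed uniqueness for the homogeneous problem enters through the Fredholm alternative.

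First I would introduce the operator $S\colon\mathbb{H}(\Omega)\to\mathbb{H}(\Omega)$ defined, via the Riesz representation theorem in the scalar product $\cE$, by $\cE(Sw,v)=B(w,v)$ for all $v\in\mathbb{H}(\Omega)$, with $B$ as in \eqref{eq:bilinear_B}. This is well posed because $B$ is a bounded bilinear form on $L^2(\Omega)\times L^2(\Omega)$ (see \cite[Lemma 3.4]{HSS22}), and since $B$ therefore factors through the compact embedding $\mathbb{H}(\Omega)\hookrightarrow L^2(\Omega)$ recalled in \eqref{c:em}, the operator $S$ is compact. By construction $\cE_L(u,v)=\cE((I+S)u,v)$ for all $u,v\in\mathbb{H}(\Omega)$, so the hypothesis that $L_\Delta u=0$ in $\Omega$, $u=0$ on $\R\setminus\Omega$, has only the trivial (weak) solution says exactly that $I+S$ is injective. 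Since $S$ is compact, the Fredholm alternative gives that $I+S$ is an isomorphism of $\mathbb{H}(\Omega)$; put $M:=\|(I+S)^{-1}\|_{\mathcal L(\mathbb{H}(\Omega))}<\infty$.

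Next I would introduce the discrete operator $S_h:=\Pi_h S$, where $\Pi_h$ is the $\cE$-orthogonal projection of $\mathbb{H}(\Omega)$ onto $\mathcal{V}_h$ used in \Cref{density:thm} (so $\|\Pi_h\|_*=1$). For $u_h,v_h\in\mathcal{V}_h$ one then has $B(u_h,v_h)=\cE(Su_h,v_h)=\cE(\Pi_h Su_h,v_h)$, hence $\cE_L(u_h,v_h)=\cE((I+S_h)u_h,v_h)$. Since $I+S_h$ maps $\mathcal{V}_h$ into itself, testing with $v_h=(I+S_h)u_h$ and using $\cE(w,w)=\|w\|^2_{\mathbb{H}(\Omega)}$ gives
\[
\sup_{v_h\in\mathcal{V}_h}\frac{\cE_L(u_h,v_h)}{\|v_h\|_{\mathbb{H}(\Omega)}}\;\ge\;\|(I+S_h)u_h\|_{\mathbb{H}(\Omega)}\qquad\text{for all }u_h\in\mathcal{V}_h ,
\]
so it suffices to show that $I+S_h$ is uniformly invertible on $\mathbb{H}(\Omega)$ for small $h$. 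For this I would use the operator-norm convergence $\|S-S_h\|_{\mathcal L(\mathbb{H}(\Omega))}=\|(I-\Pi_h)S\|_{\mathcal L(\mathbb{H}(\Omega))}\to 0$ as $h\to0$: indeed the operators $I-\Pi_h$ are uniformly bounded ($\|I-\Pi_h\|_{\mathcal L(\mathbb{H}(\Omega))}\le 2$) and converge to $0$ pointwise, because $\|(I-\Pi_h)v\|_{\mathbb{H}(\Omega)}=\inf_{w_h\in\mathcal{V}_h}\|v-w_h\|_{\mathbb{H}(\Omega)}\to 0$ by \Cref{density:thm}, while $S$ is compact, so pointwise convergence upgrades to uniform convergence on the relatively compact set $S(\{\|v\|_{\mathbb{H}(\Omega)}\le 1\})$. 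Choosing $h_0>0$ with $\|S-S_h\|_{\mathcal L(\mathbb{H}(\Omega))}\le \frac{1}{2M}$ for $h\in(0,h_0)$, a Neumann series argument applied to $I+S_h=(I+S)\bigl(I+(I+S)^{-1}(S_h-S)\bigr)$ shows that $I+S_h$ is invertible with $\|(I+S_h)^{-1}\|_{\mathcal L(\mathbb{H}(\Omega))}\le 2M$, and hence $\|(I+S_h)u_h\|_{\mathbb{H}(\Omega)}\ge \alpha_0\|u_h\|_{\mathbb{H}(\Omega)}$ with $\alpha_0:=\frac{1}{2M}$, which is independent of $h$.

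The boundedness and compactness of $S$ and the Neumann-series perturbation are routine. The step carrying the real content — and the main obstacle — is the operator-norm convergence $(I-\Pi_h)S\to 0$, which rests on the density result \Cref{density:thm} (itself relying on the quasi-interpolation bound of \Cref{eq:estab_H_c_infty} and the density of $C_c^\infty(\Omega)$ in $\mathbb{H}(\Omega)$) combined with the compactness of $S$ coming from the embedding \eqref{c:em}.
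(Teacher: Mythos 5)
Your proof is correct, but it follows a genuinely different route from the paper's. The paper argues by contradiction in the style of the classical Schatz-type argument: it assumes the inf--sup constant degenerates along a sequence $h_n\to 0$, extracts a normalized sequence $u_n\in\mathcal V_{h_n}$ with $\|u_n\|_{\mathbb H(\Omega)}=1$, passes to a weak limit, upgrades to strong convergence in $\mathbb H(\Omega)$ via the splitting $\|u_n-u\|^2_{\mathbb H(\Omega)}=\cE_L(u_n,u_n-u)-B(u_n,u_n-u)-\cE(u,u_n-u)$ (using the compact embedding \eqref{c:em} for the $B$-term and \Cref{density:thm} for the $\cE_L$-term), and concludes that the limit is a nontrivial weak solution of the homogeneous problem, contradicting the hypothesis. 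You instead make the Riesz--Fredholm structure explicit: you represent $\cE_L(u,v)=\cE((I+S)u,v)$ with $S$ compact, identify the discrete form as $\cE((I+\Pi_h S)u_h,v_h)$ on $\mathcal V_h$, and deduce uniform invertibility of $I+\Pi_h S$ from the operator-norm convergence $\|(I-\Pi_h)S\|\to 0$ (compactness of $S$ plus strong convergence $\Pi_h\to I$, which again rests on \Cref{density:thm}) and a Neumann series. Both proofs ultimately consume the same two ingredients --- the compact embedding \eqref{c:em} and the density of $\cup_h\mathcal V_h$ --- but yours is direct rather than by contradiction and exhibits the constant $\alpha_0=\tfrac{1}{2}\|(I+S)^{-1}\|^{-1}_{\mathcal L(\mathbb H(\Omega))}$ explicitly in terms of the continuous problem, at the cost of setting up the solution-operator formalism; the threshold $h_0$ remains non-quantitative in both arguments, since in your version it is determined by the soft convergence $\|(I-\Pi_h)S\|\to 0$. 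All the individual steps you flag as routine (well-posedness and compactness of $S$ via \cite[Lemma 3.4]{HSS22} and \eqref{c:em}, the identity $B(u_h,v_h)=\cE(\Pi_hSu_h,v_h)$ on $\mathcal V_h$, the choice $v_h=(I+S_h)u_h$, and the perturbation estimate) check out.
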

\begin{proof}
We argue as in \cite[Theorem 4.2.1]{dem20}. Assume, by contradiction, that for every $n\in\N$ there are $h_n<\frac{1}{n}$ and a sequence $(u_n)_{n\in\N}\in \mathcal V_{h_n}$ such that $\|u_n\|_{\mathbb H(\Omega)}=1$ and
\begin{equation}\label{ccea}
    \sup_{v_n\in \mathcal V_{h_n}}\frac{\cE_{L}(u_n,v_n)}{\|v_n\|_{\mathbb H(\Omega)}}<\frac{1}{n}\qquad \text{ for all }n\in\N.
\end{equation}
Since $(u_n)$ is bounded in $\mathbb H(\Omega)$, then there is $u\in \mathbb H(\Omega)$ such that, passing to a subsequence, $u_n \rightharpoonup u$ weakly in $\mathbb H(\Omega)$. Moreover, since $\mathbb H(\Omega)$ is compactly embedded in $L^2(\Omega)$, then $u_n\to u$ in $L^2(\Omega)$ after passing to a subsequence.  Then,
\begin{align}
    \|u_n-u\|^2_{\mathbb H(\Omega)}
    &=\cE(u_n,u_n-u)-\cE(u,u_n-u)=\cE_L(u_n,u_n-u)-B(u_n,u_n-u)-\cE(u,u_n-u),\label{007}
\end{align}
where $B$ is given by \eqref{eq:bilinear_B}.  In particular, $B$ is a bilinear continuous operator in $L^2(\Omega)$.  Note that $\cE(u,u_n-u)\to 0$ as $n\to \infty$ by weak convergence in $\mathbb H(\Omega)$, $B(u_n,u_n-u)\to 0$ as $n\to \infty$ by strong convergence in $L^2(\Omega)$. By Theorem~\ref{density:thm}, there is a sequence $(\widetilde u_n)$ in $\mathcal V_h$ such that $\widetilde u_n\to u$ in $\mathbb H(\Omega)$ as $n\to \infty$, as a consequence,
\begin{align*}
    \cE_L(u_n,u_n-u)
    &=\cE_L(u_n,u_n-\widetilde u_n)+\cE_L(u_n,\widetilde u_n-u)
    \leq \frac{\cE_L(u_n,u_n-\widetilde u_n)}{\|u_n-\widetilde u_n\|_{\mathbb H(\Omega)}}\|u_n-\widetilde u_n\|_{\mathbb H(\Omega)}+\cE_L(u_n,\widetilde u_n-u) \to 0
\end{align*}
as $n\to \infty$, by \eqref{ccea} and arguing as in \eqref{007}, because $(u_n-\widetilde u_n)_{n\in\N}$ and $(u_n)$ are bounded in $\mathbb H(\Omega)$.  As a consequence, $u_n\to u$ in $\mathbb H(\Omega)$ as $n\to\infty$. Then $\|u\|_{\mathbb H(\Omega)}=1$. Let $w\in \mathbb H(\Omega)\backslash \{0\}$ and let $(w_n)\subset \mathcal V_{h_n}$ such that $w_n\to w$ in $\mathbb H(\Omega)$ as $n\to \infty$ (see Theorem~\ref{density:thm}), then
\begin{align*}
    \frac{\cE_{L}(u,w)}{\|w\|_{\mathbb H(\Omega)}}=\lim_{n\to\infty}\frac{\cE_{L}(u_n,w_n)}{\|w_n\|_{\mathbb H(\Omega)}}\leq \lim_{n\to\infty}\sup_{v_h\in \mathcal V_{h_n}}\frac{\cE_{L}(u_n,v_h)}{\|v_h\|_{\mathbb H(\Omega)}}\leq \lim_{n\to\infty}\frac{1}{n}=0.
\end{align*}
Since $w$ is arbitrary, this implies that $u$ is a nontrivial weak solution of $L_\Delta u=0$ in $\Omega$ with $u=0$ in $\R\backslash  \Omega$, but this contradicts our assumption and the claim follows. 
\end{proof}

\begin{remark}
Note that the assumption that the problem $L_\Delta u=0$ in $\Omega$ with $u=0$ in $\R\backslash  \Omega$ only has the trivial solution is equivalent to i) in Theorem \ref{eq:regularity}.
\end{remark}

We are ready to show our main result, Theorem \ref{main:thm:intro}.

\begin{proof}[Proof of Theorem \ref{main:thm:intro}]
Fix $\alpha\in(0,1)$ as given by \Cref{eq:regularity}. Using \Cref{prop:est_interpolator} with this particular $\alpha$ and estimate \eqref{eq:stability_cea} we get $\|u-u_h\|_{\mathbb H(\Omega)}\leq C\ell^{\alpha}(h)\|u\|_{\mathbb{H}^{1+\alpha}(\Omega;\ell)}$ for some constant $C>0$ uniform with respect to $h$. By \Cref{prop:more_regularity} and the regularity estimate \eqref{eq:regularity_classical} we obtain the desired result.
\end{proof}

\section{The stiffness matrix for the logarithmic Laplacian}\label{sec:stiffness}

Recall that $\Omega:=(0,L)$ for some $L>0$.  We use $A^L_h$ to denote the stiffness matrix associated to the logarithmic Laplacian $L_\Delta.$   The entries of the matrix $A^L_h=(b_{ij})_{i,j=0}^{N+1}$ can be obtained by computing $b_{ij}=\cE_L(\varphi_i,\varphi_j)$ for $i,j=0,\ldots,N+1.$ Instead of doing this computation directly, we obtain a closed formula for $b_{ij}$ by differentiating the entries of the stiffness matrix for the fractional Laplacian (which were explicitly computed in \cite{BH17}) and evaluating them at $s=0$.  To be more precise, let $A^s_h=(b_{ij}^s)_{i,j=0}^{N+1}$ be the stiffness matrix for $(-\Delta)^s$. We show the following.
\begin{lemma}\label{lem:derivative:s:m} For $i,j=0,\ldots,N+1,$ we have that $b_{ij}=\partial_s b_{ij}^s \mid_{s=0}.$ \end{lemma}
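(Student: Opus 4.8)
The plan is to identify both $b_{ij}$ and $b_{ij}^s$ with values of a Fourier-multiplier form and then differentiate in the multiplier at $s=0$. For $s\in(0,\tfrac12)$ the fractional stiffness entry is $b_{ij}^s=\cE_s(\varphi_i,\varphi_j)$, the energy form of $(-\Delta)^s$ evaluated on the basis functions; each $\varphi_k$ is a compactly supported piecewise-linear function (Lipschitz for $k\in\inter{1,N}$, with a single jump discontinuity for $k\in\{0,N+1\}$), hence $\varphi_k\in H^s(\R)$ for every $s\in(0,\tfrac12)$, so $b_{ij}^s$ is well defined there. By Plancherel, using the Fourier symbol $|\xi|^{2s}$ of $(-\Delta)^s$ and the identity $\cE_L(u,v)=\int_\R 2\ln|\xi|\,\widehat u(\xi)\,\overline{\widehat v(\xi)}\,d\xi$, valid for $u,v\in\mathbb H(\Omega)$ — which contains all the $\varphi_k$ — and established in \cite{CW19} (the symbol of $L_\Delta$ being $2\ln|\xi|$), one obtains
\begin{equation*}
b_{ij}^s=\int_\R |\xi|^{2s}\,\widehat{\varphi_i}(\xi)\,\overline{\widehat{\varphi_j}(\xi)}\,d\xi,\qquad
b_{ij}=\cE_L(\varphi_i,\varphi_j)=\int_\R 2\ln|\xi|\,\widehat{\varphi_i}(\xi)\,\overline{\widehat{\varphi_j}(\xi)}\,d\xi .
\end{equation*}
In particular $b_{ij}^0=\langle\varphi_i,\varphi_j\rangle_{L^2(\R)}$ is the mass-matrix entry (consistent with $(-\Delta)^0=\mathrm{Id}$), so it remains to show that $s\mapsto b_{ij}^s$ is right-differentiable at $s=0$ with derivative the second integral above.

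For this I would first pin down the decay of $\widehat{\varphi_k}$. Since $\varphi_k\in L^1(\R)$, $\widehat{\varphi_k}$ is bounded; since $\varphi_k$ is compactly supported and of bounded variation, one integration by parts gives $\widehat{\varphi_k}(\xi)=O(|\xi|^{-1})$ as $|\xi|\to\infty$ (and $O(|\xi|^{-2})$ for the Lipschitz ones). Hence $g_{ij}:=\widehat{\varphi_i}\,\overline{\widehat{\varphi_j}}$ satisfies $|g_{ij}(\xi)|\le C\min(1,|\xi|^{-2})$, and I would exhibit one $L^1(\R)$ majorant of $\xi\mapsto 2|\ln|\xi||\,|\xi|^{2s}\,|g_{ij}(\xi)|$ valid for all $s\in[0,\tfrac14]$: for $|\xi|\le1$ it is bounded by $2C\,|\ln|\xi||$, integrable near the origin, and for $|\xi|\ge1$ it is bounded by $2C\,|\xi|^{2s-2}|\ln|\xi||\le 2C\,|\xi|^{-3/2}|\ln|\xi||$, integrable at infinity.

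Then I would pass to the limit in the difference quotient. For each fixed $\xi\neq0$, $\frac{|\xi|^{2s}-1}{s}=\frac{e^{2s\ln|\xi|}-1}{s}\to 2\ln|\xi|$ as $s\to0^+$, while $|e^a-1|\le|a|e^{|a|}$ yields $\bigl|\frac{|\xi|^{2s}-1}{s}\bigr|\le 2|\ln|\xi||$ for $|\xi|\le1$ and $\le 2|\ln|\xi||\,|\xi|^{2s}$ for $|\xi|\ge1$; thus $\frac{|\xi|^{2s}-1}{s}\,g_{ij}(\xi)$ is dominated, uniformly for $s\in(0,\tfrac14]$, by the majorant from the previous step. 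Dominated convergence gives
\begin{equation*}
\partial_s b_{ij}^s\big|_{s=0}=\lim_{s\to0^+}\frac{b_{ij}^s-b_{ij}^0}{s}=\int_\R 2\ln|\xi|\,g_{ij}(\xi)\,d\xi=\cE_L(\varphi_i,\varphi_j)=b_{ij}
\end{equation*}
for all $i,j\in\inter{0,N+1}$, which is the claim.

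The main obstacle is the construction of a single integrable dominating function on a full right-neighborhood of $s=0$: the symbol $\ln|\xi|$ is singular at $\xi=0$ and unbounded as $|\xi|\to\infty$, so the argument genuinely uses the $O(|\xi|^{-1})$ decay of $\widehat{\varphi_k}$ coming from the bounded-variation regularity of the hat functions (in particular the two discontinuous endpoint functions $\varphi_0,\varphi_{N+1}$, which are not present in \cite{BH17}); everything else is routine. Alternatively, one could avoid Fourier transforms entirely and invoke the bilinear-form version of the expansion \eqref{intro:exp}, namely $\cE_s(u,v)=\langle u,v\rangle_{L^2}+s\,\cE_L(u,v)+o(s)$ as $s\to0^+$ for $u,v\in\mathbb H(\Omega)$, applied with $u=\varphi_i$, $v=\varphi_j$.
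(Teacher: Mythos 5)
Your proof is correct, but it follows a genuinely different route from the paper's. The paper works entirely in physical space: it invokes the $L^p$-convergence $\frac{(-\Delta)^s\varphi_i-\varphi_i}{s}\to L_\Delta\varphi_i$ from \cite[Theorem 1.1]{CW19} (valid directly for the Lipschitz hat functions $\varphi_1,\dots,\varphi_N$, and extended to the discontinuous endpoint functions $\varphi_0,\varphi_{N+1}$ in a separate appendix lemma, Lemma~\ref{phi0:conv}), and then passes from the operator to the bilinear form by integration by parts/Fubini (Lemma~\ref{ibyp:lem}), testing against $\varphi_j\in L^\infty$ on the bounded set $\Omega$. You instead work on the Fourier side: Plancherel identities for both $\cE_s$ and $\cE_L$, the decay $\widehat{\varphi_k}=O(|\xi|^{-1})$ coming from compact support and bounded variation, and dominated convergence for the difference quotient of the multiplier $|\xi|^{2s}$. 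Your approach buys uniformity --- the two discontinuous basis functions require no separate treatment beyond the observation that BV still gives $O(|\xi|^{-1})$ decay, so the product $\widehat{\varphi_i}\overline{\widehat{\varphi_j}}$ is always $O(|\xi|^{-2})$ --- whereas the paper's approach reuses an existing theorem of \cite{CW19} at the price of reproving it for $\varphi_0,\varphi_{N+1}$; both routes ultimately lean on \cite{CW19} (you for the Fourier representation of $\cE_L$ on $\mathbb H(\Omega)$, the paper for the $L^p$ expansion). One small imprecision: for $|\xi|\le 1$ the inequality $|e^a-1|\le|a|e^{|a|}$ with $a=2s\ln|\xi|\le 0$ gives the bound $2|\ln|\xi||\,|\xi|^{-2s}$ rather than $2|\ln|\xi||$; the bound you state does hold, but via $1-e^{a}\le -a$ (i.e.\ $e^a\ge 1+a$), and in any case even the weaker bound $2|\ln|\xi||\,|\xi|^{-1/2}$ remains integrable near the origin, so the domination argument is unaffected.
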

\begin{proof}
We claim that 
\begin{align}\label{aux:1}
\lim_{s\to 0}\left\|\frac{(-\Delta)^s \varphi_i-\varphi_i}{s}-L_\Delta \varphi_i    \right\|_{L^p(\R)}=0\qquad \text{for $i=0,\ldots,N+1$ and $1<p<\infty$.}
\end{align}
For $i=1,\ldots,N$, this follows directly from \cite[Theorem 1.1]{CW19}, because $\varphi_i$ is Lipschitz continuous in $\R$.  However, the functions $\phi_0$ and $\phi_{N+1}$ have a jump discontinuity at 0 and at $L$, respectively.  Nevertheless, the proof of \cite[Theorem 1.1]{CW19} can be adapted to this case with obvious changes.  For the reader's convenience we include a brief proof in the appendix (see Lemma~\ref{phi0:conv}). Then, integrating by parts (see Lemma~\ref{ibyp:lem}) and using \eqref{aux:1}, $\partial_s b_{ij}^s\mid_{s=0}=\partial_s\cE_s(\varphi_i,\varphi_j)\mid_{s=0}=
\partial_s\int_{\R} (-\Delta)^s\varphi_i \varphi_j\, dx\mid_{s=0}
=\int_{\R} \partial_s(-\Delta)^s\varphi_i\mid_{s=0} \varphi_j\, dx
=\int_{\R} L_\Delta\varphi_i \varphi_j\, dx
=\cE_L(\varphi_i,\varphi_j)=b_{ij},$ where we used that $\int_{\R}\left(\frac{(-\Delta)^s \varphi_i-\varphi_i}{s}-L_\Delta \varphi_i  \right)\varphi_j\, dx
\leq \|\varphi_j\|_{L^\infty(\Omega)}\int_{\Omega}\left|\frac{(-\Delta)^s \varphi_i-\varphi_i}{s}-L_\Delta \varphi_i  \right|\, dx\to 0$ as $s\to 0^+,$ since $\Omega$ is bounded.
\end{proof}

Using Lemma \ref{lem:derivative:s:m} and an explicit computation of the stiffness matrix for the fractional Laplacian (see Appendix  \ref{ns:sec}), one obtains the stiffness matrix $\mathcal A_{h}^{L}=(b_{ij})\in{\R^{(N+2)\times(N+2)}}$ of the logarithmic Laplacian.

For $j,k,N\in \mathbb N$, $k>2$, let
\begin{align*} \notag
\mathsf{l}_{{j}}:= & \frac{-3 j^3 \log (j)+6 j^2 \log (j)+(j-2)^3 (-\log (j-2))+3
   (j-1)^2 (j-2) \log (j-1)+(j+1)^2 (j-2) \log (j+1)-2}{{6}}, \\
   \mathsf{m}_{ N}:= & \frac{1}{{6}}\left[2 (N-3) N^2 \log (N)-N-(N-1)^3 \log (N-1)-(N+1) ((N-4) N+1) \log (N+1)-3\right], \\
\mathsf{p}:=&  -24\log(3) - 12\log(4) - 16\log(9) + 27\log(16) + \log(144), \\ \notag
\mathsf{q}_k:=&-(k-2)^3\log(k-2) + 4(k-1)^3\log(k-1) - 6 k^3 \log(k) +4(1 + k)^3\log(1 + k) - (2 + k)^3\log(2 + k).
\end{align*}
 Moreover, we use $\psi$ to denote the digamma function and $\gamma=-\psi(1)$.

The coefficients $b_{ij}$ with $i,j\in\inter{0,N+1}$ and $j\geq 1$ are given by
\begin{equation}\label{stiff:LogLap}
b_{ij}= h
\begin{cases}
\displaystyle -\frac{\gamma }{{3}}+\frac{{8}}{9}-\frac{{2}}{3}  \log (h)+\frac{{2}}{3}  \log
   (2)+\frac{{1}}{3}  \psi \left(\frac{1}{2}\right), & \textnormal{$i=0$, $j=0$,} \\[4mm]
\displaystyle -\frac{\gamma }{{6}}+\frac{5}{{18}}-\frac{{1}}{3} \log (h)-\frac{{1}}{3}  \log
   (2)+\frac{1}{{6}} \psi \left(\frac{1}{2}\right), & \textnormal{$i=0$, $j=1$,} \\[4mm]
   -\frac{{1}}{3},
 & \parbox[t]{.3\columnwidth}{$i=0$, $j=2$,}\\[4mm]
 \mathsf{l}_{{j}},
 & \parbox[t]{.3\columnwidth}{$i=0$, $j\in\inter{3,N}$,}\\[4mm]
 \mathsf{m}_{{N}}, &  \parbox[t]{.3\columnwidth}{$i=0$, $j=N+1$,}\\[4mm]
\displaystyle -\frac{\mathsf p}{3}, & \textnormal{$i,j\in\inter{1,N}$ and $j-i=2$,} \\[4mm]
\displaystyle \frac{\mathsf q_{j-i}}{6}, & \textnormal{$i,j\in\inter{1,N}$ and $j-i> 2$,} \\[4mm]
\displaystyle -\frac{\gamma}{6} + \frac{11}{18} - \frac{\log(h)}{3}  + \frac{64\log(2) - 54\log(3)}{12} + \frac{\log(2)}{3}  + \frac{1}{6} \psi(\frac 12), & \textnormal{$i\in\inter{1,N-1}$ and $j=i+1$,} \\[4mm]
\displaystyle -\frac{2}{9}\left[6\log(h) + 3\gamma - 11 + \log(64) - 3\psi(\frac12)\right], & \textnormal{$i\in\inter{1,N}$ and $j=i$.}
\end{cases}
\end{equation}

\section{Numerical evidence and illustrations}\label{sec:numerics}

Trying to characterize a logarithmic convergence rate numerically is a difficult and challenging problem from the numerical perspective. In this section we discuss some partial evidence for the optimality of the logarithmic convergence rate stated in Theorem~\ref{main:thm:intro} and we show numerical approximations of some explicit solutions.

\subsection{Some numerical experiments on the optimality of the convergence rate}

In general, a usual way to observe the optimality of the FEM convergence rate is using the torsion function, namely the solution of the boundary-value problem with right-hand side $f\equiv 1$ (see, among others, \cite[Section 5]{AB17} or \cite[Section 5]{BHS19}), which in many cases has an explicit formula. Unfortunately, no closed expression for the torsion function of the logarithmic Laplacian \eqref{t:p:intro} is known.

Because of this, we propose the following. Let $\Omega:=(-1,1)$ and consider the function $u:\R\to\R$ given by
\begin{align}\label{udef}
 u(x)=\frac{1}{\sqrt{-\ln\left(\frac{1-x^2}{2}\right)}} \chi_{\Omega}(x),
\end{align}
where $\chi_{\Omega}$ denotes the characteristic function in the interval $(-1,1)$.  This choice is justified by the following facts
\begin{itemize}
 \item $u$ has a uniformly bounded continuous logarithmic Laplacian in $(-1,1)$ (as direct calculations show, see \cite[Theorem 2.4]{HSLRS23}). Nonetheless, we remark that $L_{\Delta} u$ does not have a closed formula;
 \item $u$ has the optimal boundary regularity\footnote{This is needed to observe the optimal convergence rate. Indeed, solutions which are smoother typically yield better convergence rates, see, for instance, \cite[Section 5 and Table 2]{AB17} for the case of the fractional Laplacian.} for logarithmic Dirichlet problems (see \cite[Theorem 1.2]{HSLRS23}), namely, it behaves as $\ell^{1/2}(d(x))$.
\end{itemize}

Now, consider $h=\frac{1}{N+1}$ for $N\in\mathbb N$, and let $x_i:=-1+ih$ for $i\in\inter{0,N+1}$ so that $-1=x_0<x_1<\ldots<x_{N}<x_{N+1}=1$. Let $f(x) := \sum_{i=0}^{N+1} [L_\Delta u](x_i)\varphi_i(x)$, $f_i:=\sum_{j=0}^{N+1}L_\Delta u(x_j)\int_\Omega \varphi_i\varphi_j\, dx$, $F=(f_i)$, and let $\alpha:=A_h^{-1}F$,
where $A_h$ is the stiffness matrix for the logarithmic Laplacian given in \eqref{stiff:LogLap}. Let 
\begin{align}\label{vdef}
v_h(x):=\sum_{i=0}^{N+1}\alpha_i \phi_i(x), 
\end{align}
which is a way to approximate\footnote{For instance, the norm $\|L_\Delta u-f\|_{L^2}$ can be approximated numerically, this gives an estimate on $\|u_h-v_h\|_{L^2}$ due to the stability of discrete problems, and then
$\|u-u_h\|_{L^2}\leq \|u-v_h\|_{L^2}+\|v_h-u_h\|_{L^2},$ where $\|u-v_h\|_{L^2}$ can also be estimated numerically.
} $u_h$ (defined in Theorem~\ref{main:thm:intro}), see Figure~\ref{fig:2}. Note that $f_i\approx \int_{\Omega}L_{\Delta}u(x)\phi_i(x)dx$. Since there is no closed formula for $L_{\Delta}u$ we compute $f_i$ with the aid of Mathematica 14.0, where \eqref{LL} is computed and approximated numerically on the mesh grid points $x_i$.

\begin{figure}[htb]
\small
	\centering
	\includegraphics{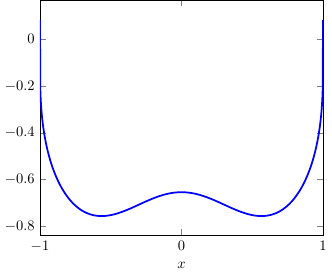}
	\caption{The logarithmic Laplacian of $u(x)$ defined in \eqref{udef}.}
	\label{fig:2}
\end{figure}

\begin{figure}[htb]
\small
	\centering
	\subfloat[]{
	\includegraphics{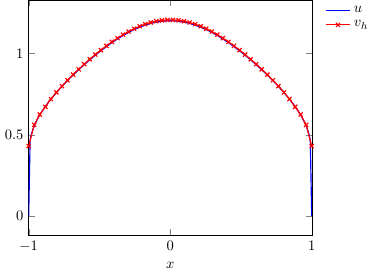}
	}\quad 
	\subfloat[]{
	\includegraphics{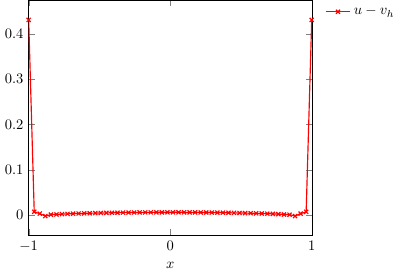}
	}

	\caption{A comparison between $u$ given by \eqref{udef} and $v_h$ given by \eqref{vdef} for $N=50$.}
	\label{fig:2}
\end{figure}

As seen in Figure~\ref{fig:2}, the largest error occurs close to the boundary of the interval, where the function $u$ has a logarithmic behavior.  To quantify this error we compute the following quantities,
\begin{align*}
 a_h&:=\left(\int_{-1}^1 |u-v_h|^2\, dx\right)^\frac{1}{2}\qquad \text{(the $L^2$-norm of the error between $u$ and $v$)},\\
 b_h&:=\left(\int_{K} |u-v_h|^2\, dx\right)^\frac{1}{2}\qquad \text{(a local $L^2$-norm of the error between $u$ and $v$ on a fixed $K\subset\Omega$)},\\
 c_h&:=\sup_{(-1,1)} |u-v_h|\qquad \text{(the $L^\infty$-norm of the error between $u$ and $v$)}.
\end{align*}

We remark that $b_h$ gives information on the error rate in the interior of the interval, far away from the endpoints.

We are particularly interested in exhibiting some evidence for the optimality (or not) of the logarithmic rate in Theorem~\ref{main:thm:intro}.  For this, we compute these quantities for different values of $h$ and describe the behavior of the error curve, which we show in Figure~\ref{fig:3}.

\begin{figure}[htb]
\small
	\centering
	\subfloat[The $L^2$-norm $(a_h)$.]{
	\includegraphics{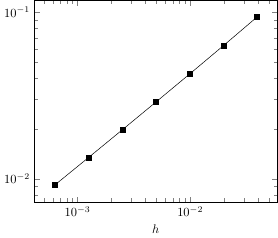}
	}  
	\subfloat[The $L^2_{loc}$-norm $(b_h)$.]{
	\includegraphics{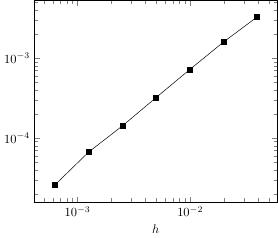}
	}
	\subfloat[The $L^\infty$-norm $(c_h)$.]{
	\includegraphics{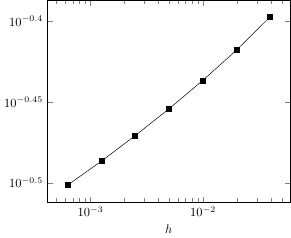}
	}
	\caption{Error curves for $a_h$, $b_h$ (with $K=(-0.9,0.9)$), and $c_h$.}
	\label{fig:3}
\end{figure}

The typical behavior of algorithms with convergence rate of order $h^p$ for some $p>0$ would be a straight line in these plots; in particular, this is the case of the fractional Laplacian, see for instance \cite[Figure 8]{BH17}, \cite[Figure 1]{AB17}, and Table~\ref{table:frac_lap} in the Appendix. The curves in Figure~\ref{fig:3} look almost straight; we present a computation of their slope (with respect to consecutive points) in Table~\ref{table:1}, which shows a slight change. This behavior is robust with respect to the size of the interval.  Below we present the same experiment for the function
$w(x)=\left(
-\ln\left(\frac{1}{2}-2x^2\right)\right)^{-\frac{1}{2}} \chi_{(-\frac{1}{2},\frac{1}{2})}(x)$, see Table~\ref{table:2}.

The discussion above suggests that the optimal convergence rate for the logarithmic Laplacian may not be of power-type when considering $L^\infty$-norms.  The analysis of $L^2$-norms seems to suggest that there might be a possibility to improve the logarithmic convergence rate shown in Theorem~\ref{main:thm:intro} to a polynomial decay rate, at least in the $L^2$-sense. Intuitively, the optimal error rate has an intrinsic connection with the optimal regularity of weak solutions. To enhance the rate (e.g. to a polynomial rate) one would need a more robust regularity theory.  More research in this direction and a better understanding of the regularity of logarithmic problems is required to address these interesting open problems.

\begin{table}[htb]
\centering
\pgfplotstableread{sol-log_convergence_data_07-02-2025_12h47.org}\data
\pgfplotstableset{
columns={N,h,L2norm,slopeL2,L2locnorm,slopeL2loc,Linfnorm,slopeinf},
columns/N/.style={
column name=$N$,
},	
columns/h/.style={
column name=$h$,
sci,sci zerofill,sci subscript,
precision=2,
column type/.add={}{|}},	
columns/L2norm/.style={
column name=$L^2$-norm ($a_h$),
sci,sci zerofill,sci subscript,
precision=3},
columns/slopeL2/.style={
column name=slope,
sci,sci zerofill,sci subscript,
precision=2,
column type/.add={}{|}},	
columns/L2locnorm/.style={
column name=$L^2_{loc}$-norm ($b_h$),
sci,sci zerofill,sci subscript,
precision=3},
columns/slopeL2loc/.style={
column name=slope,
precision=2,
column type/.add={}{|}},	
columns/Linfnorm/.style={
column name=$L_{\infty}$-norm ($c_h$),
sci,sci zerofill,sci subscript,
precision=4,
column type/.add={}{}},
columns/slopeinf/.style={
column name=slope,
sci,sci zerofill,sci subscript,
precision=2,
column type/.add={}{}},
}
\pgfplotstabletypeset[clear infinite, empty cells with={\ensuremath{-}},
every head row/.style={
before row=\toprule,after row=\midrule},
every last row/.style={
after row=\bottomrule},
]{\data}
\caption{Error data  for $a_h$, $b_h$ (with $K=(-0.9,0.9)$), and $c_h$.}
\label{table:1}
\end{table}

\begin{table}[htb]
\centering
\pgfplotstableread{sol-log_convergence_data_07-02-2025_12h53.org}\data
\pgfplotstableset{
columns={N,h,L2norm,slopeL2,L2locnorm,slopeL2loc,Linfnorm,slopeinf},
columns/N/.style={
column name=$N$,
},	
columns/h/.style={
column name=$h$,
sci,sci zerofill,sci subscript,
precision=2,
column type/.add={}{|}},	
columns/L2norm/.style={
column name=$L^2$-norm,
sci,sci zerofill,sci subscript,
precision=3},
columns/slopeL2/.style={
column name=slope,
sci,sci zerofill,sci subscript,
precision=2,
column type/.add={}{|}},	
columns/L2locnorm/.style={
column name=$L^2_{loc}$-norm,
sci,sci zerofill,sci subscript,
precision=3},
columns/slopeL2loc/.style={
column name=slope,
precision=2,
column type/.add={}{|}},	
columns/Linfnorm/.style={
column name=$L_{\infty}$-norm,
sci,sci zerofill,sci subscript,
precision=4,
column type/.add={}{}},
columns/slopeinf/.style={
column name=slope,
sci,sci zerofill,sci subscript,
precision=2,
column type/.add={}{}},
}
\pgfplotstabletypeset[clear infinite, empty cells with={\ensuremath{-}},
every head row/.style={
before row=\toprule,after row=\midrule},
every last row/.style={
after row=\bottomrule},
]{\data}
\caption{Error data for the approximation of $w$. Here we used $K=(-0.4,0.4)$ in the definition of $b_h$.}
\label{table:2}
\end{table}

\subsection{Approximation of explicit solutions}

Now, we use three explicit solutions to further illustrate the FEM approximation of solutions to logarithmic Dirichlet problems. For $L>0$, let  $\Omega:=(-L,L)$, $h_\Omega(x):=-\ln(L^2-|x|^2)$, and $\rho_1:=2 \ln (2)+\psi\left(\frac{1}{2}\right)-\gamma\approx -1.15443$, where $\gamma=-\Gamma^{\prime}(1)$ is the Euler-Mascheroni constant and $\psi$ is the Digamma function given by $\psi=\frac{\Gamma^{\prime}}{\Gamma}$.  For $x\in\R$, let 
\begin{align*}
u_1(x):=\chi_{[-L,L]}(x), \qquad
u_2(x):=\chi_{[-L,L]}(x) x, \qquad
u_3(x):=(L^2-x^2)_+=(L^2-x^2)\chi_{[-L,L]}(x).
\end{align*}
Then, for $x\in \Omega$, it is not difficult to check that
\begin{align*}
    L_\Delta u_1(x)&=h_\Omega(x)+\rho_1,\quad
    L_\Delta u_2(x)&=x(2+h_\Omega(x)+\rho_1),\quad
    L_\Delta u_3(x)&=L^2-3x^2+(h_\Omega(x)+\rho_1)(L^2-x^2).
\end{align*}

We remark that the functions $u_1$ and $u_2$ do not satisfy the assumptions of Theorem~\ref{main:thm:intro}, because the right-hand side does not belong to $\mathcal Y(\Omega)$. Nevertheless, the FEM produces an approximation also in these cases, see \Cref{table:u1u2u3} for further information on the quality of approximation.  We shall mention that the function $u_3$ is Lipschitz in $\R$ and thus it exhibits a better convergence rate. Therefore, we cannot use $u_3$ to show the optimality of the convergence rate stated in Theorem~\ref{main:thm:intro}.

\begin{table}[htb!]
\centering
\pgfplotstableread{sol-log_convergence_data_04-02-2025_19h30.org}\data
\pgfplotstableset{
columns={N,h,L2norm,slopeL2},
columns/N/.style={
column name=$N$,
},	
columns/h/.style={
column name=$h$,
sci,sci zerofill,sci subscript,
precision=2,
column type/.add={}{|}},	
columns/L2norm/.style={
column name=$L^2$-norm,
sci,sci zerofill,sci subscript,
precision=3},
columns/slopeL2/.style={
column name=slope,
precision=2,
column type/.add={}{|}},	
}
\pgfplotstabletypeset[clear infinite, empty cells with={\ensuremath{-}}, 
every head row/.style={before row={ \toprule & & \multicolumn{2}{c}{$u_1(x)$}  \\ },after row=\midrule},
every last row/.style={
after row=\bottomrule},
]{\data}
\pgfplotstableread{sol-log_convergence_data_04-02-2025_19h35.org}\data
\pgfplotstableset{
columns={L2norm,slopeL2},
columns/L2norm/.style={
column name=$L^2$-norm,
sci,sci zerofill,sci subscript,
precision=3},
columns/slopeL2/.style={
column name=slope,
precision=2,
column type/.add={}{|}},	
}
\hspace{-0.35cm}
\pgfplotstabletypeset[clear infinite, empty cells with={\ensuremath{-}},
every head row/.style={before row={ \toprule \multicolumn{2}{c}{$u_2(x)$}  \\ },after row=\midrule},
every last row/.style={
after row=\bottomrule},
]{\data}
\pgfplotstableread{sol-log_convergence_data_04-02-2025_19h36.org}\data
\pgfplotstableset{
columns={L2norm,slopeL2},
columns/L2norm/.style={
column name=$L^2$-norm,
sci,sci zerofill,sci subscript,
precision=3},
columns/slopeL2/.style={
column name=slope,
precision=2,
column type/.add={}{}},	
}
\hspace{-0.35cm}
\pgfplotstabletypeset[clear infinite, empty cells with={\ensuremath{-}},
every head row/.style={before row={ \toprule \multicolumn{2}{c}{$u_3(x)$}  \\ },after row=\midrule},
every last row/.style={
after row=\bottomrule},
]{\data}
\caption{Error data for the approximation of $u_1$, $u_2$ and $u_3$.}
\label{table:u1u2u3}
\end{table}

\section{Approximation of the eigenvalues of the logarithmic Laplacian}\label{eigenvalue approximation:sec}

Standard Fredholm theory implies that if $0$ is not an eigenvalue of $L_\Delta$, then the Dirichlet problem with $L_{\Delta}$ is uniquely solvable for any $f\in L^2(\Omega)$. By the regularity theory, this translates to classical solutions in view of  Theorem~\ref{eq:regularity}. To approximate the eigenvalues of $L_{\Delta}$ in $\Omega$, we consider first the case of Theorem~\ref{eq:regularity}(ii), that is, the case in which $0$ is an eigenvalue of $L_{\Delta}$ in $\Omega$. In this case there is a nontrivial $\phi\in \mathcal X^{\alpha}(\Omega)$, for some $\alpha>0$, satisfying $L_{\Delta}\phi=0$ in  $\Omega$ and $\phi=0$ in $\R\setminus \Omega$ ---we emphasize here that the following argument can be done in any dimension. Note that if we define for $r>0$ a function $\phi_r=\phi(\cdot/r)$, then $L_{\Delta}\phi_r=-2\ln(r)\phi_r$ in $r\Omega$ and $\phi_r=0$ in $\R\setminus r\Omega$, see \cite[Lemma 2.5]{LW21}. Thus having $0$ as an eigenvalue for $\Omega$ gives us the eigenvalue $-2\ln(r)$ in $r\Omega$.

In this way, if we can approximate the value of $L>0$ such that the operator $L_{\Delta}$ has $0$ as an eigenvalue in $\Omega=(-L,L)$, we can use this information to approximate the eigenvalues of $(-\tilde{L},\tilde{L})$ for a general $\tilde{L}>0$. Note that here we consider a symmetric interval centered at $0$ instead of the interval $(0,L)$. To find these values of $L$, we first show that the eigenvalues of the stiffness matrix $\mathcal A_h^L$ actually converge to the eigenvalues of $L_{\Delta}$ and then we use the condition number of the stiffness $\mathcal{A}^L_h$ to find the cases in which there is zero as an eigenvalue. Here, we denote the condition number of a matrix $A$ by
\begin{equation}\label{defi:condition}
\textnormal{cond}(A):=\Big|\frac{\lambda_{\max}(A)}{\lambda_{\min}(A)}\Big|,
\end{equation}
where with $\sigma(A)=\{\lambda\in \R\;:\; \lambda\text{ is an eigenvalue of $A$}\}$ for a matrix $A$ we denote $\lambda_{\max}(A):=\max_{\lambda\in \sigma(A)}|\lambda|$ and $\lambda_{\min}(A):=\min_{\lambda\in \sigma(A)}|\lambda|.$ In this way, $\textnormal{cond}(\mathcal{A}^L_h)$ blows up while moving $L$ from $0$ to infinity every time there is $0$ as an eigenvalue of $\mathcal{A}^L_h$.

\begin{proposition}\label{eigenvalue approximation a}
For $L>0$ the eigenvalues of $L_{\Delta}$ in $(-L,L)$ are given by $\lambda_i=2\ln(L_i/L)$, $i\in \N$, where for each $i\in \N$ the value $L_i>0$ is such that zero is an eigenvalue of $L_{\Delta}$ in $(-L_i,L_i)$.
\end{proposition}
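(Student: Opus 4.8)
The plan is to obtain the identity as a direct consequence of the scaling law for the Dirichlet eigenvalues of $L_\Delta$. Recall first, from \cite[Theorem 1.4]{CW19}, that for every bounded interval $U$ the operator $L_\Delta$ has a purely discrete spectrum consisting of a sequence $\lambda_1(U)<\lambda_2(U)\le\lambda_3(U)\le\cdots\to+\infty$ with an associated $L^2(U)$-orthonormal basis of eigenfunctions; here $\lambda_i(U)$ denotes the $i$-th eigenvalue counted with multiplicity. The key tool is the scaling relation of \cite[Lemma 2.5]{LW21}, namely that for a bounded interval $\Omega$ and every $R>0$ one has $\lambda_i(R\Omega)=\lambda_i(\Omega)-2\ln R$ for all $i\in\mathbb{N}$ (this is exactly the identity recalled before the statement, obtained by noting that $\phi\mapsto\phi(\cdot/r)$ maps an eigenfunction of eigenvalue $\mu$ on $\Omega$ to one of eigenvalue $\mu-2\ln r$ on $r\Omega$, bijectively). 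Taking $\Omega=(-1,1)$ and $R=L$ gives
\[
 \lambda_i\bigl((-L,L)\bigr)=\lambda_i\bigl((-1,1)\bigr)-2\ln L \qquad \text{for all } L>0,\ i\in\mathbb{N}.
\]

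Next I would fix $i\in\mathbb{N}$ and define $L_i:=\exp\!\bigl(\tfrac12\lambda_i((-1,1))\bigr)>0$, so that $2\ln L_i=\lambda_i((-1,1))$. Feeding $L=L_i$ into the displayed identity yields $\lambda_i((-L_i,L_i))=\lambda_i((-1,1))-\lambda_i((-1,1))=0$, which shows that $0$ is the $i$-th eigenvalue of $L_\Delta$ in $(-L_i,L_i)$; in particular a value $L_i>0$ with the stated property exists. Then, for arbitrary $L>0$, substituting $2\ln L_i=\lambda_i((-1,1))$ back into the displayed identity gives
\[
 \lambda_i\bigl((-L,L)\bigr)=2\ln L_i-2\ln L=2\ln\!\bigl(L_i/L\bigr),
\]
which is the asserted formula.

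For well-definedness one checks that $0$ is an eigenvalue of $L_\Delta$ in $(-L,L)$ if and only if $\lambda_j((-1,1))=2\ln L$ for some $j$, i.e. $L\in\{L_j\}_{j\in\mathbb{N}}$, so each $L_i$ is pinned down uniquely as the radius for which $0$ is precisely the $i$-th eigenvalue; moreover, since $i\mapsto\lambda_i((-1,1))$ is nondecreasing and strictly increasing at the first step ($\lambda_1<\lambda_2$), the $L_i$ satisfy $0<L_1<L_2\le L_3\le\cdots$. I do not expect a substantial obstacle here: the statement is essentially a bookkeeping consequence of the scaling law, and the only points deserving care are to use a consistent indexing of the eigenvalues (counted with multiplicity) in both the scaling lemma and the statement, and to invoke \cite[Theorem 1.4]{CW19} for the fact that the spectrum is genuinely a sequence indexed by $\mathbb{N}$ diverging to $+\infty$. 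The genuinely computational part — the numerical values $L_1,\dots,L_6$ quoted in Proposition~\ref{eigenvalue approximation} — is handled separately, by combining the convergence of the spectrum of $\mathcal A_h^L$ to that of $L_\Delta$ (Proposition~\ref{prop:eigen-approx}) with the blow-up of $\textnormal{cond}(\mathcal A_h^L)$ as $L$ crosses each $L_i$; the identity proved here requires none of that.
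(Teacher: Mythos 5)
Your proposal is correct and follows essentially the same route as the paper: invoke \cite[Theorem 1.4]{CW19} for the discrete spectrum, use the scaling law $\lambda_i(R\Omega)=\lambda_i(\Omega)-2\ln R$ to produce the radii $L_i$ at which $0$ becomes the $i$-th eigenvalue, and scale back to get $\lambda_i=2\ln(L_i/L)$. Your write-up is in fact more explicit than the paper's terse proof (e.g.\ the formula $L_i=\exp(\tfrac12\lambda_i((-1,1)))$ and the care with multiplicities), but the underlying argument is identical.
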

\begin{proof}
We already know that there is an infinite sequence of eigenvalues $\lambda_1<\lambda_2\leq\ldots\lambda_k\to \infty$ for $k\to\infty$ of $L_{\Delta}$ in $\Omega=(-L,L)$ for some fixed $L$, by \cite[Theorem 1.4]{CW19}. By scalings, one can find a sequence of values $L_i>0$, $i\in \N$ satisfying $0<L_1<L_2<\ldots$ with $L_i\to\infty$ for $i\to \infty$ such that $0$ is an eigenvalue of $L_{\Delta}$ in $(-L_i,L_i)$. Scaling again, it follows that the eigenvalues of $L_{\Delta}$ in $(-L,L)$ are given by $\lambda_i=2\ln(L_i/L)$ for $i\in \N$.
\end{proof}

We next show the convergence of the eigenvalues of $\mathcal{A}^L_h$ to eigenvalues of $L_{\Delta}$ for $h\to 0$. Then we present an approximation of $\textnormal{cond}(A^L_h)$ for small enough $h$ and give the approximations of the eigenvalues in $(0,L)$ for general $L>0$.

For the convergence, we cannot work with Theorem~\ref{main:thm:intro}, since it is not clear whether the eigenfunctions belong to the space $\mathcal{Y} (\Omega)$. We first classify the eigenvalues $\{\lambda_k\}_{k\in \N}$ of $L_{\Delta}$ with the Courant-Fischer minimax principle, see e.g. \cite[Proposition 2.3 and Remark 2.4]{FJW22}, that is it holds
\begin{align}\label{CF}
\lambda_k=\inf_{\substack{M\subset \mH(\Omega)\\ \dim(M)=k}} \max_{\substack{u\in M\setminus\{0\}\\ \|u\|_{L^2(\Omega)}=1}} \mathcal{E}_L(u,u),\quad k\in \N.
\end{align}

\begin{proposition}\label{prop:eigen-approx}
Let $N\in \mathbb N$ and $h:=\frac{L}{N+1}>0$. Then
$$
\lambda_{k,h}:=\inf_{\substack{M_h\subset \mathcal{V}_{h}\\ \dim(M_h)=k}} \max_{\substack{u_h\in M_h\setminus\{0\}\\ \|u_h\|_{L^2(\Omega)}=1}} \mathcal{E}_L(u_h,u_h)\to \lambda_k\quad\text{as $h\to 0$.}
$$
Here, $\lambda_{k,h}$ equals to the $k$-th eigenvalue of $\mathcal{A}^L_{h}$.
\end{proposition}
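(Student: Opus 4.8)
The plan is a direct Rayleigh--Ritz comparison, avoiding the classical Galerkin spectral theory for coercive forms (which does not apply here, $\mathcal{E}_L$ being non-coercive). First note that the discrete minimax is well defined: on the $L^2$-unit sphere one has $\mathcal{E}_L(u,u)=\|u\|_{\mathbb H(\Omega)}^2+B(u,u)\geq -C_B$, where $C_B>0$ is the $L^2$-continuity constant of the form $B$ from \eqref{eq:bilinear_B}, so the maximum in the definition of $\lambda_{k,h}$ is attained (the constraint set is compact in the finite-dimensional $\mathcal{V}_h$) and bounded below. \emph{Identification with the matrix eigenvalue.} Writing $u_h=\sum_{i\in\inter{0,N+1}}\alpha_i\varphi_i$ identifies $\mathcal{V}_h$ with $\R^{N+2}$, and then $\mathcal{E}_L(u_h,u_h)=\alpha^{\mathsf T}\mathcal{A}_h^L\alpha$ and $\|u_h\|_{L^2(\Omega)}^2=\alpha^{\mathsf T}M_h\alpha$, where $M_h=\big(\int_{\Omega}\varphi_i\varphi_j\big)_{i,j\in\inter{0,N+1}}$ is the symmetric positive definite mass matrix. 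Since this linear correspondence maps $k$-dimensional subspaces to $k$-dimensional subspaces, the Courant--Fischer theorem for the symmetric--definite pencil $(\mathcal{A}_h^L,M_h)$ gives that $\lambda_{k,h}$ equals its $k$-th smallest generalized eigenvalue, i.e.\ the $k$-th eigenvalue of the symmetric matrix $M_h^{-1/2}\mathcal{A}_h^LM_h^{-1/2}$; this is what is meant by ``the $k$-th eigenvalue of $\mathcal{A}_h^L$''.

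\emph{Lower bound $\lambda_{k,h}\geq\lambda_k$.} This is immediate from conformity: every $k$-dimensional $M_h\subset\mathcal{V}_h$ is an admissible subspace in the continuous minimax \eqref{CF}, because $\mathcal{V}_h\subset\mathbb H(\Omega)$ (cf.\ the proof of \Cref{density:thm}). Hence the infimum defining $\lambda_{k,h}$ is taken over a subfamily of the one defining $\lambda_k$, so $\lambda_{k,h}\geq\lambda_k$ for every $h>0$.

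\emph{Upper bound $\limsup_{h\to 0}\lambda_{k,h}\leq\lambda_k$.} Let $\phi_1,\dots,\phi_k\in\mathbb H(\Omega)$ be $L^2$-orthonormal eigenfunctions of $L_\Delta$ for $\lambda_1\leq\cdots\leq\lambda_k$ (these exist by \cite[Theorem~1.4]{CW19}), put $E_k:=\operatorname{span}(\phi_1,\dots,\phi_k)$, and let $\Pi_h$ be the $\mathbb H(\Omega)$-orthogonal projection onto $\mathcal{V}_h$, which has norm one (see the proof of \Cref{density:thm}). By \Cref{density:thm}, $\|u-\Pi_h u\|_{\mathbb H(\Omega)}\to 0$ for each fixed $u$; since the unit ball of the finite-dimensional $E_k$ is compact and the maps $u\mapsto\|u-\Pi_h u\|_{\mathbb H(\Omega)}$ are equi-Lipschitz, this convergence is uniform on $E_k$, i.e.\ $\varepsilon_h:=\sup\{\|u-\Pi_h u\|_{\mathbb H(\Omega)}:u\in E_k,\ \|u\|_{\mathbb H(\Omega)}\leq 1\}\to 0$. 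For $h$ small enough that $\varepsilon_h<1$, $\Pi_h|_{E_k}$ is injective, so $M_h:=\Pi_h(E_k)\subset\mathcal{V}_h$ has dimension $k$ and may be used as a competitor. For $u=\sum_j c_j\phi_j\in E_k$, using $\mathcal{E}_L(\phi_i,\phi_j)=\lambda_j\delta_{ij}$ one gets $\mathcal{E}_L(u,u)=\sum_j\lambda_j c_j^2\leq\lambda_k\|u\|_{L^2(\Omega)}^2$, and, by equivalence of norms on the finite-dimensional $E_k$, $\|u\|_{\mathbb H(\Omega)}^2\leq C_k\|u\|_{L^2(\Omega)}^2$. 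Bilinearity, the continuity bound $|\mathcal{E}_L(v,w)|\leq C\|v\|_{\mathbb H(\Omega)}\|w\|_{\mathbb H(\Omega)}$ (from \cite[Lemma~3.4]{HSS22} and the compact embedding \eqref{c:em}, as in the proof of \Cref{lem:cea_mg}), and $\|\Pi_h u\|_{\mathbb H(\Omega)}\leq\|u\|_{\mathbb H(\Omega)}$ yield $\mathcal{E}_L(\Pi_h u,\Pi_h u)=\mathcal{E}_L(u,u)+O(\varepsilon_h\|u\|_{\mathbb H(\Omega)}^2)$, while $\mathbb H(\Omega)\hookrightarrow L^2(\Omega)$ gives $\|\Pi_h u\|_{L^2(\Omega)}=(1+O(\varepsilon_h))\|u\|_{L^2(\Omega)}$. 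Hence any $w_h=\Pi_h u\in M_h$ with $\|w_h\|_{L^2(\Omega)}=1$ satisfies $\|u\|_{L^2(\Omega)}=1+O(\varepsilon_h)$ and $\mathcal{E}_L(w_h,w_h)\leq(\lambda_k+O(\varepsilon_h))\|u\|_{L^2(\Omega)}^2=\lambda_k+O(\varepsilon_h)$. Taking the maximum over $w_h$ and then the infimum over admissible competitors gives $\lambda_{k,h}\leq\lambda_k+O(\varepsilon_h)$, and letting $h\to 0$ closes the argument together with the lower bound.

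\emph{Main obstacle.} The one genuine difficulty is the lack of coercivity of $\mathcal{E}_L$, which rules out a direct appeal to the standard Galerkin spectral-convergence theory. I get around it in two places: (i) I work with the minimax characterization \eqref{CF} rather than with the solution operator, so that only boundedness below of $\mathcal{E}_L$ on the $L^2$-sphere and its $\mathbb H(\Omega)$-continuity are used; and (ii) I upgrade the pointwise density of \Cref{density:thm} to \emph{uniform} approximation over the finite-dimensional eigenspace $E_k$, which is precisely what makes $M_h=\Pi_h(E_k)$ a legitimate $k$-dimensional trial space with Rayleigh quotients $\leq\lambda_k+o(1)$. An alternative is to shift $\mathcal{E}_L$ by $\mu\|\cdot\|_{L^2(\Omega)}^2$ to make it coercive, apply the Babu\v{s}ka--Osborn theory to the resulting compact self-adjoint solution operator, and translate the eigenvalues back; the direct route above avoids having to verify operator-norm convergence of the discrete solution operators, which is delicate here since the $L_\Delta$-regularity theory provides no $H^{1/2+\varepsilon}$-type smoothing for merely $L^2$ data.
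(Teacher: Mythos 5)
Your proof is correct, and although it shares the paper's skeleton --- the Courant--Fischer characterization \eqref{CF}, the lower bound $\lambda_k\le\lambda_{k,h}$ by conformity of $\mathcal V_h\subset\mathbb H(\Omega)$, and \Cref{density:thm} as the approximation engine --- the upper bound is run along a genuinely different and more complete route. The paper fixes a near-optimal $k$-dimensional subspace $M_\delta$, approximates only its single maximizer $m$ by some $m_h\in\mathcal V_h$, and then bounds $\lambda_{k,h}$ by an expression involving $\mathcal E_L(m_h,m_h)$ alone; as written this controls the Rayleigh quotient at one discrete vector, whereas for $k\ge2$ the minimax requires a $k$-dimensional discrete trial subspace together with a bound on the \emph{maximum} of the Rayleigh quotient over all of it. Your argument supplies exactly that: you approximate the whole eigenspace $E_k$ uniformly (compactness of its unit ball plus the equi-Lipschitz property of $u\mapsto\|u-\Pi_h u\|_{\mathbb H(\Omega)}$), verify that $\Pi_h(E_k)$ is genuinely $k$-dimensional once $\varepsilon_h<1$, and bound every $L^2$-normalized element of it by $\lambda_k+O(\varepsilon_h)$, so the subspace competitor is fully legitimate. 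This extra compactness step does real work and tightens a point the paper leaves compressed. You are also more precise on the final identification: the constraint $\|u_h\|_{L^2(\Omega)}=1$ turns the discrete minimax into the generalized eigenvalue problem for the pencil $(\mathcal A_h^L, M_h)$ with $M_h$ the mass matrix, rather than the ordinary spectrum of $\mathcal A_h^L$, which the paper asserts is ``easy to see''. The only added cost of your route is the use of an $L^2$-orthonormal eigenbasis with $\mathcal E_L(\phi_i,\phi_j)=\lambda_j\delta_{ij}$ (available from \cite[Theorem 1.4]{CW19}), where the paper works only with near-optimal subspaces; nothing essential is lost, and your version of the upper bound is the one I would keep.
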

\begin{proof}
By \eqref{CF}, 
\begin{align}\label{1st}
\lambda_k=\inf_{\substack{M\subset \mH(\Omega)\\ \dim(M)=k}} \max_{\substack{u\in M\setminus\{0\}\\ \|u_h\|_{L^2(\Omega)}=1}} \mathcal{E}_L(u,u)\leq \inf_{\substack{M_h\subset \mathcal{V}_{h}\\ \dim(M_h)=k}} \max_{\substack{u_h\in M_h\setminus\{0\}\\ \|u_h\|_{L^2(\Omega)}=1}} \mathcal{E}_L(u_h,u_h)=\lambda_{k,h}.
\end{align}
 By \eqref{c:em}, there is $C=C(\Omega)>0$ such that $\|u\|_{L^2(\Omega)}\leq C\|u\|_{\mH(\Omega)}$ for all $u\in \mH(\Omega)$. Let  $\delta\in(0,1)$. Then there is a subspace $M_{\delta}\subset \mH(\Omega)$ with $\dim(M_{\delta})=k$ and some $m\in M_{\delta}$ with $\|m\|_{L^2(\Omega)}=1$ such that
\begin{align}\label{delta:eq}
\max_{\substack{ u\in M_{\delta}\setminus \{0\}\\ \|u\|_{L^2(\Omega)}=1}} \mathcal{E}_L(u,u)=\mathcal{E}_{L}(m,m)\leq \lambda_k+\frac{\delta}{2}. 
\end{align}
Using Theorem~\ref{density:thm} we find $h_0>0$ such that $\|m-m_h\|_{\mH(\Omega)}\leq \frac{\delta}{2C}$ for all $h\in(0,h_0)$ and for some $m_h\in {\mathcal V}_h$. Moreover, $0<\|m_h\|_{L^2(\Omega)}\leq \|m-m_h\|_{L^2(\Omega)}+1\leq \frac{\delta}{2}+1
$. Then, using \eqref{1st} and \eqref{delta:eq},
\begin{align}\notag 
\lambda_{k}\leq \lambda_{k,h}&\leq \|m_h\|_{L^2(\Omega)}^2 \mathcal{E}_L(m_h,m_h)\leq \left( \frac{\delta}{2}+1\right)^2\Big(\mathcal{E}_L(m_h-m,m_h+m)+\mathcal{E}_L(m,m)\Big)\notag\\
&\leq \left( \frac{\delta}{2}+1\right)^2\mathcal{E}_L(m_h-m,m_h+m)
+\left( \frac{\delta}{2}+1\right)^2\big(\lambda_k+\delta\big).\label{lambda-k-upperbound}
\end{align}
By the definition of $\mathcal{E}_L$ (see \eqref{cEL:def}) and the Cauchy-Schwarz inequality,
\begin{align}\notag 
\mathcal{E}_L(m_h-m,m_h+m)&\leq \mathcal{E}(m_h-m,m_h+m)+\|m_h-m\|_{L^1(\R)}\|m_h+m\|_{L^1(\R)}\\ \notag
&\quad +\rho_1\|m_h-m\|_{L^2(\R)}\|m_h+m\|_{L^2(\R)}\notag \\
&\leq \|m_h-m\|_{\mH(\Omega)}\Big(\|m_h\|_{\mH(\Omega)}+\|m\|_{\mH(\Omega)}\Big)\notag \\ 
&\quad +\Big(|\Omega|^2+\rho_1\Big)\|m_h-m\|_{L^2(\R)}\Big(\|m_h\|_{L^2(\R)}+\|m\|_{L^2(\R)}\Big)\notag\\
&\leq \frac{\delta}{2C}\Big(\|m_h\|_{\mH(\Omega)}+\|m\|_{\mH(\Omega)}+3C\big(|\Omega|^2+\rho_1\big)\Big).\label{lambda-k-upperbound2}
\end{align}
Since $\|m_h\|_{\mH(\Omega)}\to \|m\|_{\mH(\Omega)}$ as $h\to 0$, we can assume that
$\|m_h\|_{\mH(\Omega)}\leq \|m\|_{\mH(\Omega)}+1$ by making $h_0$ smaller, if necessary. Then, from \eqref{lambda-k-upperbound} and \eqref{lambda-k-upperbound2}, we have $\lambda_k\leq \lambda_{k,h}
\leq 
\delta\frac{(\delta+2)^2}{8C}\Big(2\|m\|_{\mH(\Omega)}+1+3C\big(|\Omega|^2+\rho_1\big)\Big)
+
\left( \frac{\delta}{2}+1\right)^2\big(\lambda_k+\delta\big).$ Sending $\delta\to 0$, the right-hand side converges to $\lambda_k$ and this shows the first part of the statement.

For the last part, note that, since the $\phi_i$ ($i\in\inter{0,N+1}$) form a basis of $\mathcal{V}_h$, the dimension of $\mathcal{V}_{h}$ is $N+2$ and, for any $u_h\in \mathcal{V}_{h}$, we can find $r_k\in \R$ with $k\in \inter{0,N+1}$ such that $u_h=\sum_{k=0}^{N+1}r_k\phi_k$ and then $\mathcal{E}_L(u_h,u_h)=\sum_{k,i=0}^{N+1}r_kr_i\mathcal{E}_L(\phi_i,\phi_j)= \mathcal{A}^L_h r\cdot r.$ From here, it is easy to see that $\lambda_{k,h}$ denotes the $k$-the eigenvalue of $\mathcal{A}^L_{h}$.
\end{proof}

\begin{remark}
We emphasize that the scaling of the logarithmic Laplacian behaves the same in all dimensions. Due to this, and by following the proof of Proposition~\ref{prop:eigen-approx} closely, the approach to estimate the eigenvalues of $L_{\Delta}$ in an arbitrary domain $\Omega\subset\R^N$ can be done in an equivalent way, once the stiffness matrix is calculated and Theorem~\ref{density:thm} is verified for the corresponding approximation space $\mathcal{V}_h$.
\end{remark}

We close this section with explicit numerical approximations. In \Cref{Fig:eigen}, we show the numerical approximation of the first and second eigenfunctions of \( L_{\Delta} \) for two different intervals. As noted in \cite[Theorem 1.4]{CW19} (see also \cite{FJW22}), the first eigenfunction is unique and can be chosen to be positive, which agrees with the experiment. On the other hand, very few properties are known for the second eigenfunction. From the figure, we observe that it changes sign exactly once and it is antisymmetric, suggesting an interesting direction for further research.

\begin{figure}[htb]
	\centering
	\subfloat[First eigenfunction]{
	\includegraphics{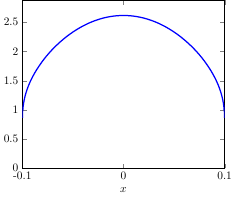}
	}\quad
	\subfloat[Second eigenfunction]{
	\includegraphics{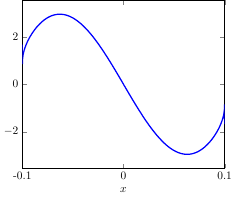}
	} \\
	\subfloat[First eigenfunction]{
	\includegraphics{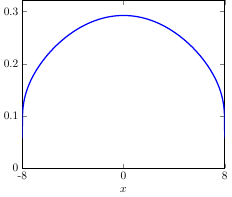}
	}\quad
	\subfloat[Second eigenfunction]{
	\includegraphics{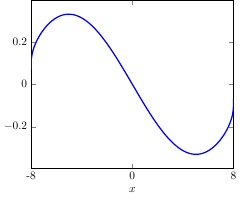}
	} 
	\caption{Numerical approximation of eigenfunctions of \( L_{\Delta} \) for  $L = 0.1$ (top) and $L = 8$ (bottom).
}
	\label{Fig:eigen}
\end{figure}

In Figure~\ref{L vs condition} the condition of $\mathcal{A}^L_h$ is plotted for a fixed $N$ and varying $L$. The singularities then indicated the relevant areas in which $L$ can be chosen to have $0$ as an eigenvalue in $(-L,L)$. To approximate the exact values $L_i$ for $i=1,\ldots,6$ we use a golden-section search to find local maxima.

%

\begin{center}
\begin{tabular}{c|c|ccccccc}
$L_i$ & interval & $N=2^4$ & $N=2^5$ & $N=2^6$ & $N=2^7$ & $N=2^8$ & $N=2^9$ & $N=2^{10}$\\
\hline
$L_1$ & $[0.6,0.8]$ & $0.7153$ &$0.7119 $ &$0.7103$ & $0.7095$ & $0.7092$ & $ 0.7090$ &$ 0.7090$\\
$L_2$ & $[2.3,2.5]$ & $2.3982$ &$2.3906$ &$2.3844$ & $2.3811$ &$2.3796$ &$2.3790$&$2.3787$\\
$L_3$ & $[3.8,4.0]$ & $3.9249  $ &$3.9279 $ &$3.9205 $ & $3.9153$ &$3.9127 $ &$3.9115$&$3.9110$\\
$L_4$ & $[5.4,5.6]$ & $5.5053$ &$5.5272 $ &$5.5210$ & $5.5141 $ &$ 5.5103 $ &$5.5085$&$5.5077$\\
$L_5$ & $[7.0,7.2]$ & $7.0338$ &$ 7.0772 $ &$7.0766 $ & $7.0690$ &$ 7.0642  $ &$7.0619$&$7.0608$\\
$L_6$ & $[8.5,8.7]$ & $8.6133$ &$8.6573 $ &$8.6640$ & $8.6563$ &$8.6504$ &$8.6474$&$8.6461$
\end{tabular} 
\end{center}           

\begin{proof}[Proof of Proposition~\ref{eigenvalue approximation}]
This follows immediately from Proposition~\ref{eigenvalue approximation a}, the scaling properties of $L_{\Delta}$, and the above table.
\end{proof}

\begin{figure}[htbp]
	\centering
	\includegraphics{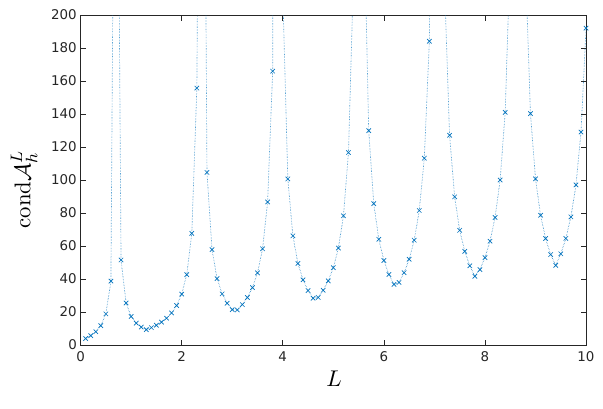}
	\caption{The condition of $\mathcal{A}^L_h$ for $L\in(0.1,10)$ with $N=2000$.}
	\label{L vs condition}
\end{figure}

Lastly, \Cref{Fig:blow} illustrates the behavior of the torsion function as \( L \to L_1\sim 0.7090 \). Since this corresponds to an eigenvalue approaching zero, we expect the torsion function to uniformly blow up in this limit (due to nonexistence), which is indeed observed in Figure \ref{Fig:blow}. The torsion function is positive in the limit from the left and negative in the limit from the right.

\begin{figure}[htb]
	\centering
	\subfloat[$L=0.7040$]{
	\includegraphics{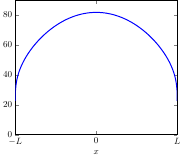}
	} 
	\subfloat[$L=0.7089$]{
	\includegraphics{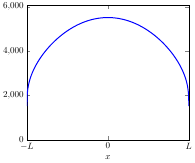}
	} 
	\subfloat[$L=0.7091$]{
	\includegraphics{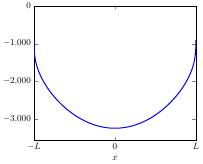}
	} 
	\subfloat[$L=0.714$]{
	\includegraphics{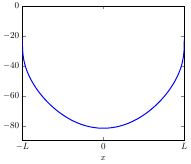}
	} 

	\caption{Blow up behavior of the torsion function as $L\to L_1\sim 0.7090$ from the left and from the right. In these experiments $N=2^{10}$.
}
	\label{Fig:blow}
\end{figure}

 \appendix

\section{Some auxiliary lemmas}
\begin{lemma}\label{lem:over}
Let $R>0$ and $\alpha\in(0,1)$. Then there is $C=C(R,\alpha)>0$ such that $\int_0^{R} \frac{\ell^{1+\alpha}(\rho)}{\rho}d\rho\leq C.$
\end{lemma}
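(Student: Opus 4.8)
The plan is to split the integral at the threshold $\delta:=\min(R,\rho_0)$ that appears in the definition \eqref{ell:def} of $\ell$, and to bound the two resulting pieces separately; both are elementary since the integrand is nonnegative.

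On the interval $[\delta,R]$ — which is empty, hence contributes nothing, when $R\le\rho_0$ — the function $\ell$ is constant and equal to $|\ln\rho_0|^{-1}$. Therefore $\int_\delta^R \ell^{1+\alpha}(\rho)\,\rho^{-1}\,d\rho=|\ln\rho_0|^{-(1+\alpha)}\ln(R/\delta)$, and since $\delta\le R$ this is a finite nonnegative constant depending only on $R$ and $\alpha$ (indeed $\delta=\rho_0$ whenever this piece is nontrivial, so it equals $|\ln\rho_0|^{-(1+\alpha)}\ln(R/\rho_0)$).

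On $(0,\delta]$ we have $\ell(\rho)=|\ln\rho|^{-1}=(-\ln\rho)^{-1}$, using that $\delta\le\rho_0<1$ forces $-\ln\rho>0$ there. I would use the substitution $t=-\ln\rho$, so that $d\rho/\rho=-\,dt$ and $\rho\in(0,\delta]$ corresponds to $t\in[-\ln\delta,+\infty)$ with $-\ln\delta>0$; the integral becomes $\int_{-\ln\delta}^{+\infty} t^{-(1+\alpha)}\,dt=\tfrac1\alpha(-\ln\delta)^{-\alpha}$, which is finite because $\alpha>0$. Adding the two contributions yields the claim with $C=|\ln\rho_0|^{-(1+\alpha)}\ln(R/\delta)+\tfrac1\alpha(-\ln\delta)^{-\alpha}$.

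There is no real obstacle here: the statement is a routine integrability computation. The only mild care needed is to record that $\delta\le\rho_0<1$ (so $-\ln\rho$ is positive on the relevant range and $-\ln\delta>0$) and that the exponent $1+\alpha>1$ is what makes the tail integral $\int^{+\infty} t^{-(1+\alpha)}\,dt$ convergent.
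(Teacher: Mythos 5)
Your proof is correct and follows essentially the same route as the paper's: split the integral at a threshold below $\rho_0$, evaluate the singular piece near $0$ via the substitution $t=-\ln\rho$ (using that $1+\alpha>1$ makes $\int^\infty t^{-(1+\alpha)}\,dt$ converge), and bound the outer piece where $\ell$ is controlled. Your choice of split point $\delta=\min(R,\rho_0)$ even makes the outer piece an exact, clean evaluation (since $\ell$ is constant there), which is tidier than the paper's bound on $[r,R]$.
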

\begin{proof}
Let $r\in(0,0.1)$ be fixed and assume that $R>0.1$. We split the integral into two parts, that is $\int_{0}^{R}\frac{\ell^{1+\alpha}(\rho)}{\rho}d\rho=\int_{0}^{r} \frac{\ell^{1+\alpha}(\rho)}{\rho}d\rho + \int_{r}^{R}\frac{\ell^{1+\alpha}(\rho)}{\rho}d\rho.$ Using that $d\rho/\rho=d(\ln \rho)$ and by definition of the function $\ell$, we have
\begin{align*}
    \int_{0}^{R}\frac{\ell^{1+\alpha}(\rho)}{\rho}d\rho &=-\int_{0}^{r} \frac{d|\ln(\rho)|}{|\ln(\rho)|^{1+\alpha}} + \int_{r}^{R}\frac{\ell^{1+\alpha}(\rho)}{\rho}d\rho 
    \\ &\leq -\int_{0}^{r} \frac{d|\ln(\rho)|}{|\ln(\rho)|^{1+\alpha}} + \int_{r}^{R}\frac{|\ln(r)|^{-1-\alpha}}{\rho}d\rho=\frac{\ell^\alpha(r)}{\alpha}+\ln(r)^{-1-\alpha}\ln(R/r):=C(R,\alpha).
\end{align*}
The case $R\leq 0.1$ follows similarly. This ends the proof. 
\end{proof}
%
%

\begin{lemma}\label{lem:ellbeta}
For every $\beta>0$,
\begin{align*}
\int_0^h \ell^{-\beta}(\rho)\, d\rho = O\left(\frac{h}{\ell^{\beta}(h)}\right)\qquad \text{ as $h\to 0$.}
\end{align*}
In particular, for every $R>0$ there is $C=C(R,\beta)>0$ such that $\int_0^R \ell^{-\beta}(\rho)\, d\rho<C.$
\end{lemma}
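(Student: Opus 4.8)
The plan is to reduce everything to the elementary tail asymptotics of the Gamma integral via the substitution $\rho=e^{-t}$. Since $\ell^{-\beta}(\rho)=|\ln(\min(\rho_0,\rho))|^{\beta}$ equals $(-\ln\rho)^{\beta}$ for $\rho\in(0,\rho_0)$ and is bounded by the constant $|\ln\rho_0|^{\beta}$ for $\rho\ge\rho_0$, it suffices to control $\int_0^h(-\ln\rho)^{\beta}\,d\rho$ for small $h<\rho_0$; the range $\rho\ge\rho_0$ only ever contributes a bounded term and is harmless for both assertions. Note also that, for $h<\rho_0$, one has $h/\ell^{\beta}(h)=h\,\ell^{-\beta}(h)=h(-\ln h)^{\beta}$, so the $O$-statement is precisely $\int_0^h(-\ln\rho)^{\beta}\,d\rho=O\big(h(-\ln h)^{\beta}\big)$ as $h\to0^+$.

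First I would substitute $\rho=e^{-t}$, $d\rho=-e^{-t}\,dt$, which turns $\int_0^h(-\ln\rho)^{\beta}\,d\rho$ into $\int_T^{\infty}t^{\beta}e^{-t}\,dt$ with $T:=-\ln h\to+\infty$ as $h\to0^+$; the claim becomes $\int_T^{\infty}t^{\beta}e^{-t}\,dt=O(T^{\beta}e^{-T})$. Then I would observe that $t\mapsto t^{\beta}e^{-t/2}$ is non-increasing for $t\ge 2\beta$ (its logarithmic derivative $\beta/t-1/2$ is $\le 0$ there), so that for $T\ge 2\beta$ and all $t\ge T$ one has $t^{\beta}e^{-t}=\big(t^{\beta}e^{-t/2}\big)e^{-t/2}\le T^{\beta}e^{-T/2}e^{-t/2}$. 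Integrating in $t$ from $T$ to $\infty$ gives $\int_T^{\infty}t^{\beta}e^{-t}\,dt\le 2\,T^{\beta}e^{-T}$, an explicit quantitative bound; undoing the substitution yields $\int_0^h(-\ln\rho)^{\beta}\,d\rho\le 2\,h\,\ell^{-\beta}(h)$ for every $h\le e^{-2\beta}$, which is exactly the first assertion. (Alternatively one could apply L'Hôpital to $F(h)/\big(h(-\ln h)^{\beta}\big)$ with $F(h)=\int_0^h(-\ln\rho)^{\beta}\,d\rho$, checking it is a $0/0$ form and that the ratio of derivatives is $(-\ln h)/\big((-\ln h)-\beta\big)\to1$, giving the limit $1$; but the direct tail estimate keeps the constant explicit.)

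For the ``in particular'' clause, given $R>0$ I would split $\int_0^R\ell^{-\beta}(\rho)\,d\rho=\int_0^{\min(R,\rho_0)}(-\ln\rho)^{\beta}\,d\rho+\int_{\min(R,\rho_0)}^R\ell^{-\beta}(\rho)\,d\rho$, bound the first integral by the finite number $\int_{-\ln\min(R,\rho_0)}^{\infty}t^{\beta}e^{-t}\,dt\le\Gamma(\beta+1)$ using convergence of the Gamma integral, and bound the second by $|\ln\rho_0|^{\beta}(R-\rho_0)_+$, thus exhibiting a finite constant $C(R,\beta)$.

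The only mild point to be careful about — not really an obstacle — is recording the explicit threshold $h\le e^{-2\beta}$ (equivalently $T\ge 2\beta$) so that the $O$-statement is quantitative and uniform, and verifying the monotonicity of $t\mapsto t^{\beta}e^{-t/2}$ past $t=2\beta$; everything else is a one-line change of variables together with the standard tail estimate for the incomplete Gamma function.
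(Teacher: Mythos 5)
Your proof is correct and follows essentially the same route as the paper: the substitution $\rho=e^{-t}$ converts the integral into the tail $\int_{-\ln h}^{\infty}t^{\beta}e^{-t}\,dt$ of the incomplete Gamma function, whose asymptotics give the claim. The only difference is that you prove the tail bound $\int_T^{\infty}t^{\beta}e^{-t}\,dt\leq 2T^{\beta}e^{-T}$ for $T\geq 2\beta$ by an elementary monotonicity argument (with an explicit constant), whereas the paper simply cites the known asymptotic $\Gamma(s,x)=O(x^{s-1}e^{-x})$; both are valid.
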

\begin{proof}
Observe that, for $h$ small enough and $\beta>0$, using a change of variables $\rho=e^{-t}$ ($t=-\ln \rho,$ $d\rho=-e^{-t} dt$),
\begin{align*}
\int_0^h \ell^{-\beta}(\rho)\, d\rho
=\int_0^h -\ln^{\beta}(\rho)\, d\rho
=\int_{-\ln h}^\infty t^{\beta}e^{-t}\, dt=\Gamma(\beta+1,-\ln h),
\end{align*}
where $\Gamma(\beta+1,-\ln h)$ denotes the (upper) incomplete Gamma function. It is known (see, for instance, \cite{Tem75}) that $\Gamma(s,x)= O\left(x^{s-1}e^{-x}\right)$ as $h\to 0$, and the first claim follows. Hence, if $F(R):=\int_0^R \ell^{-\beta}(\rho)\, d\rho$, then $F$ is continuous in $[0,\infty)$ and $F(0)=0$, from which the second claim easily follows.
\end{proof}

\begin{lemma}\label{lem:int:bds}
Let $\alpha>0$ and $\Omega=(0,L)$ for some $L>0$. There is $C=C(L,\alpha)>0$ such that 
\begin{align*}
\int_{\Omega}\int_{\Omega}\frac{\ell^{1+\alpha}(|x-y|)}{|x-y|\ell^{2+2\alpha}(d(x,y))}\,dy\, dx+ \int_{\Omega}\ell^{-1+\alpha}(d(x))\, dy\,dx<C.
\end{align*}
\end{lemma}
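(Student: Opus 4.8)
The plan is to bound the two summands of the left-hand side separately, using two features of the setting: for $\Omega=(0,L)$ the distance function is $d(x)=\min(x,L-x)$, so $\int_\Omega g(d(x))\,dx=2\int_0^{L/2}g(\rho)\,d\rho$ for nonnegative $g$; and $\ell$ is non-decreasing. No use of the semi-homogeneity of $\ell$ (\Cref{prop1}) is needed here --- crudely decoupling the diagonal singularity from the boundary weights will suffice.

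\emph{Second summand.} Its integrand depends only on $x$, so (up to the factor $|\Omega|=L$ coming from the $dy$ written in the statement) it reduces to $\int_\Omega \ell^{-1+\alpha}(d(x))\,dx = 2\int_0^{L/2}\ell^{\alpha-1}(\rho)\,d\rho$. If $\alpha\ge1$, then $\alpha-1\ge0$ and $\ell\le1$ make this at most $L$; if $\alpha\in(0,1)$, it is finite by \Cref{lem:ellbeta} with $\beta=1-\alpha>0$.

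\emph{First summand.} Because $\ell$ is non-decreasing and $d(x,y)=\min(d(x),d(y))$, we have $\ell^{2+2\alpha}(d(x,y))=\min\bigl(\ell^{2+2\alpha}(d(x)),\ell^{2+2\alpha}(d(y))\bigr)$, hence $\ell^{-2-2\alpha}(d(x,y))\le \ell^{-2-2\alpha}(d(x))+\ell^{-2-2\alpha}(d(y))$. Substituting this and exploiting the symmetry in $x$ and $y$, the first summand is at most $2\int_\Omega \ell^{-2-2\alpha}(d(x))\Bigl(\int_\Omega \frac{\ell^{1+\alpha}(|x-y|)}{|x-y|}\,dy\Bigr)dx$. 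For the inner integral I would discard the constraint on $y$ beyond $|x-y|\le\diam\Omega=L$, so that it is at most $2\int_0^L \frac{\ell^{1+\alpha}(\rho)}{\rho}\,d\rho$, a finite constant depending only on $L$ and $\alpha$: this is \Cref{lem:over} when $\alpha\in(0,1)$, and for $\alpha\ge1$ it follows from \Cref{lem:over} applied with parameter $1/2$ after the bound $\ell^{1+\alpha}\le\ell^{3/2}$ (valid since $\ell\le1$). It then remains to bound $\int_\Omega \ell^{-2-2\alpha}(d(x))\,dx = 2\int_0^{L/2}\ell^{-(2+2\alpha)}(\rho)\,d\rho$, which is finite by \Cref{lem:ellbeta} with $\beta=2+2\alpha>0$. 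Combining the two contributions yields the asserted constant $C=C(L,\alpha)$.

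\emph{Main difficulty.} I do not expect a genuine obstacle: the whole content is that the three a priori divergent factors --- the kernel singularity $|x-y|^{-1}$, tamed near the diagonal by $\ell^{1+\alpha}(|x-y|)$ (\Cref{lem:over}), and the two boundary weights $\ell^{-2-2\alpha}(d(\cdot))$ and $\ell^{-1+\alpha}(d(\cdot))$, each integrable in one dimension (\Cref{lem:ellbeta}) --- are harmless in isolation, so the crude decoupling above loses nothing essential. The only points needing a little care are the elementary inequality $\ell^{-2-2\alpha}(\min(a,b))\le\ell^{-2-2\alpha}(a)+\ell^{-2-2\alpha}(b)$ and the small case distinction $\alpha<1$ versus $\alpha\ge1$ when invoking \Cref{lem:over}.
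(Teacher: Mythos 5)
Your argument is correct and follows essentially the same route as the paper's proof: split off the two summands, use $\ell^{-2-2\alpha}(\min(d(x),d(y)))\le \ell^{-2-2\alpha}(d(x))+\ell^{-2-2\alpha}(d(y))$ plus symmetry to decouple the diagonal singularity from the boundary weight, and then invoke \Cref{lem:over} for $\int_0^L\ell^{1+\alpha}(\rho)\rho^{-1}\,d\rho$ and \Cref{lem:ellbeta} for the one-dimensional boundary integrals. The only difference is cosmetic (you reduce $\int_\Omega g(d(x))\,dx$ to $2\int_0^{L/2}g$ where the paper bounds it by $\int_0^L g(x)\,dx+\int_0^L g(L-x)\,dx$), and your explicit case distinction for $\alpha\ge 1$ is a small point of extra care that the paper glosses over.
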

\begin{proof}
Recall that $d(x):=\dist(x,\partial \Omega)$ and $d(x,y)=\min(d(x),d(y)).$ Then, $\int_0^L \ell^{-1+\alpha}(d(x))\,dx\leq \int_0^L \ell^{-1+\alpha}(x)\,dx+\int_0^L \ell^{-1+\alpha}(L-x)\,dx=:C_1<\infty,$ by Lemma~\ref{lem:ellbeta}. On the other hand, by Lemmas~\ref{lem:over} and~\ref{lem:ellbeta},
\begin{align*}
\int_0^L\int_0^L & \frac{\ell^{1+\alpha}(|x-y|)}{|x-y|\ell^{2+2\alpha}(d(x,y))}\,dy\, dx
\leq 2\int_0^L\int_0^L\frac{\ell^{1+\alpha}(|x-y|)}{|x-y|\ell^{2+2\alpha}(d(x))}\,dy\, dx\\
&\leq 4\int_0^L\left(\int_0^L\frac{\ell^{1+\alpha}(|x-y|)}{|x-y|}\,dy\right)\frac{1}{\ell^{2+2\alpha}(x)}\, dx
\leq 4\left(\int_0^L\frac{\ell^{1+\alpha}(\rho)}{\rho}\,d\rho\right)\int_0^L\frac{1}{\ell^{2+2\alpha}(x)}\, dx=:C_2<\infty.
\end{align*}
\end{proof}

The next lemma follows closely the proof of \cite[Theorem 1.1]{CW19}.
\begin{lemma}\label{phi0:conv}
For $1<p<\infty$,
\begin{align*}
\lim_{s\to 0}\left\|\frac{(-\Delta)^s \varphi_0-\varphi_0}{s}-L_\Delta \varphi_0    \right\|_{L^p(\R)}
=\lim_{s\to 0}\left\|\frac{(-\Delta)^s \varphi_{N+1}-\varphi_{N+1}}{s}-L_\Delta \varphi_{N+1}    \right\|_{L^p(\R)}
=0.
\end{align*}
\end{lemma}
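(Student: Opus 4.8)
The plan is to follow the proof of \cite[Theorem~1.1]{CW19} essentially verbatim, isolating the single place where the jump of $\varphi_0$ at $x=0$ (resp.\ of $\varphi_{N+1}$ at $x=L$) enters. It suffices to treat $\varphi_0$, since $(-\Delta)^s$ and $L_\Delta$ commute with translations and reflections and $x\mapsto\varphi_{N+1}(L-x)$ is a translate of $\varphi_0$. First I would write down the pointwise singular-integral representation of $(-\Delta)^s\varphi_0$, which holds at every point where $\varphi_0$ is locally Lipschitz, hence a.e.: splitting the kernel at radius $1$ and using $\int_{\R\setminus B_1(x)}|x-y|^{-1-2s}\,dy=1/s$,
\[
(-\Delta)^s\varphi_0(x)=\ccs A_s(x)+\tfrac{\ccs}{s}\varphi_0(x)-\ccs N_s(x),\qquad A_s(x):=\int_{B_1(x)}\frac{\varphi_0(x)-\varphi_0(y)}{|x-y|^{1+2s}}\,dy,\quad N_s(x):=\int_{\R\setminus B_1(x)}\frac{\varphi_0(y)}{|x-y|^{1+2s}}\,dy.
\]
As in \cite{CW19}, the explicit value of $\ccs$ gives $\ccs/s=1+s\rho_1+o(s)$ as $s\to0^+$ with $\rho_1=2\ln2+\psi(\tfrac12)-\gamma$. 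Writing $A_0,N_0$ for the same integrals with $|x-y|$ instead of $|x-y|^{1+2s}$, formula \eqref{LL} reads $L_\Delta\varphi_0=A_0+\rho_1\varphi_0-N_0$, so after dividing by $s$ the statement reduces to showing $A_s\to A_0$ and $N_s\to N_0$ in $L^p(\R)$ as $s\to0^+$.

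The tail $N_s\to N_0$ is routine: $\varphi_0$ is bounded with compact support, $|x-y|^{-1-2s}\le|x-y|^{-1}$ on $\R\setminus B_1(x)$, so $N_s$ is dominated uniformly in $s\in(0,\tfrac12)$ by a fixed $L^p(\R)$ function and converges pointwise; apply dominated convergence. The one genuinely new point — which I expect to be the main obstacle — is producing an $s$-uniform $L^p$-majorant for $A_s$ near the discontinuity, because, unlike for smooth $\varphi$, both $A_s$ and $A_0$ are unbounded as $x\to0$ or $x\to h$. My plan for this: fix $p$ and choose $s_0=s_0(p)\in(0,\tfrac12)$ with $2s_0p<1$, and set $\delta(x):=\min(|x|,|x-h|)$, the distance from $x$ to $\{0,h\}$. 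On $B_{\delta(x)}(x)$ the points $x,y$ lie on the same side of $\{0,h\}$, where $\varphi_0$ is Lipschitz with constant $1/h$, so that part of $A_s$ is bounded by $\tfrac1h\int_0^1\rho^{-2s_0}\,d\rho$ uniformly in $s\in(0,s_0)$. On the annulus $B_1(x)\setminus B_{\delta(x)}(x)$ I would crudely bound $|\varphi_0(x)-\varphi_0(y)|\le1$ and invoke the elementary monotonicity inequality $\tfrac{\rho^{-2s}-1}{2s}\le\tfrac{\rho^{-2s_0}-1}{2s_0}$ for $0<\rho\le1$, $0<s\le s_0$ (since $t\mapsto(e^t-1)/t$ increases), yielding a bound by $\tfrac{1}{2s_0}(\delta(x)^{-2s_0}-1)$. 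Hence $|A_s(x)|\le C(1+\delta(x)^{-2s_0})$ on the bounded set $\{\dist(x,[0,h])\le1\}$ and $A_s\equiv0$ off it, uniformly in $s\in(0,s_0)$; since $2s_0p<1$ this majorant lies in $L^p(\R)$. A majorant of the same form shows $A_s(x)\to A_0(x)$ for $x\notin\{0,h\}$ by dominated convergence in $y$, and then dominated convergence gives $A_s\to A_0$ in $L^p(\R)$.

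Combining the two convergences with $\ccs/s\to1$ and $\tfrac1s(\ccs/s-1)\to\rho_1$ gives $\big\|\tfrac{(-\Delta)^s\varphi_0-\varphi_0}{s}-L_\Delta\varphi_0\big\|_{L^p(\R)}\to0$, and the reflection remark transfers this to $\varphi_{N+1}$. In short, all ingredients are exactly those of \cite{CW19} except the control of $A_s$ near the jump: the effect of the discontinuity is merely a (harmless, $L^p$-for-all-$p$) logarithmic blow-up of $A_0$, but handling $A_s$ uniformly in $s$ forces the crude polynomial majorant $\delta(x)^{-2s_0}$ and hence an auxiliary exponent $s_0$ depending on $p$ — a minor extra bookkeeping compared with the smooth case.
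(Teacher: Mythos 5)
Your proposal is correct and follows essentially the same route as the paper's proof: both reduce to the argument of \cite[Theorem 1.1]{CW19} after splitting the kernel (the paper cuts at radius $R>4$, you at radius $1$), and both identify the only new difficulty as the singularity of the integrals at the jump point of $\varphi_0$, which is logarithmic and hence in $L^p$ for every $p<\infty$. The paper dismisses the $s$-uniform domination near the jump as ``direct computations,'' whereas you actually supply it via the auxiliary exponent $s_0(p)$ and the majorant $\delta(x)^{-2s_0}$ — a welcome filling-in of the sketched step, not a different method.
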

\begin{proof}
It suffices to show the claim for $\varphi_0$. Recall that $\Omega=(0,L)$ and $\varphi_0(x)=2(1-\frac{x}{h})\chi_{[0,h]}$ for $h\in(0,1)$ small (with respect to $L$).  Let $R>4.$  For $x \in \mathbb{R}$, we have
$
\left[(-\Delta)^s \varphi_0\right](x)=\int_{\mathbb{R}} \frac{\varphi_0(x)-\varphi_0(x+z)}{|z|^{1+2s}} d z=A_R(s, x)+D_R(s) \varphi_0(x)
$
with $A_R(s, x):=c_{1, s}\left(\int_{B_R} \frac{\varphi_0(x)-\varphi_0(x+z)}{|z|^{1+2 s}} d z-\int_{\mathbb{R} \backslash B_R} \frac{\varphi_0(x+z)}{|z|^{1+2s}} d z\right)$
and $D_R(s):=c_{1, s} \int_{\mathbb{R} \backslash B_R}|z|^{-1-2 s} d z=\frac{c_{1, s}}{s}R^{-2 s}.$ Let $c_{1, s}=s d_1(s)$ with $d_1(s):=\frac{c_{1, s}}{s}=\pi^{-\frac{1}{2}} 2^{2 s} \frac{\Gamma\left(\frac{1}{2}+s\right)}{\Gamma(1-s)}.$  Note that, for $x\in B_R\backslash \{0\}$,
\begin{align*}
\frac{A_R(s, x)}{s} & =d_1(s) \int_{B_R} \frac{\varphi_0(x)-\varphi_0(x+z)}{|z|^{1+2 s}} d z
\rightarrow \tilde{A}_R(x):=\int_{B_R} \frac{\varphi_0(x)-\varphi_0(x+z)}{|z|} d z \quad \text { as } s \rightarrow 0^{+}.
\end{align*}
These integrals are singular at $x=0$, but the singularity is of logarithmic type.  As a consequence, direct computations show that this convergence also holds in $L^p(\R)$ for $1<p<\infty$.  The rest of the argument follows exactly as in \cite[Theorem 1.1]{CW19}.
\end{proof}

\begin{lemma}\label{ibyp:lem}
For $i,j=0,\ldots,N+1$ and $s<\frac{1}{4}$, we have that $\cE_s(\varphi_i,\varphi_j)=\int_{\R}(-\Delta)^s\varphi_i\varphi_j\, dx$ and $\cE_L(\varphi_i,\varphi_j)=\int_{\R}L_\Delta\varphi_i\varphi_j\, dx.$
\end{lemma}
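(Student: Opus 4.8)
The two identities are the usual ``integration by parts'' formulas relating a nonlocal quadratic form to the operator that generates it, and the natural route is to symmetrize the defining double integral and apply Fubini. The one feature that needs care is that $\varphi_0$ and $\varphi_{N+1}$ are discontinuous (they jump at $0$ and at $L$), so before symmetrizing I would verify two points: first, that $(-\Delta)^s\varphi_i$ and $L_\Delta\varphi_i$ are well defined for a.e.\ $x$ and that the pairings $\int_\R(-\Delta)^s\varphi_i\,\varphi_j\,dx$ and $\int_\R L_\Delta\varphi_i\,\varphi_j\,dx$ converge absolutely; second, that after the regrouping $(\varphi_i(x)-\varphi_i(y))(\varphi_j(x)-\varphi_j(y))=(\varphi_i(x)-\varphi_i(y))\varphi_j(x)-(\varphi_i(x)-\varphi_i(y))\varphi_j(y)$ the resulting double integrals are absolutely convergent, so that the change of variables $x\leftrightarrow y$ and Fubini are legitimate.

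For the first point: when $i\in\inter{1,N}$ the hat function $\varphi_i$ is globally Lipschitz with compact support, so $(-\Delta)^s\varphi_i$ and $L_\Delta\varphi_i$ are bounded and decay at infinity (like $|x|^{-1-2s}$, resp.\ $|x|^{-1}$), hence lie in $L^2(\R)$. When $i\in\{0,N+1\}$ I would split the defining integral into the part where the integration variable is near the jump point $x_\ast$ and the rest, exactly as in the proof of \Cref{phi0:conv}: the regular part is bounded, while the jump contributes a term of size $O(|x-x_\ast|^{-2s})$ for $(-\Delta)^s\varphi_i$ and $O(|\ln|x-x_\ast||)$ for $L_\Delta\varphi_i$ as $x\to x_\ast$. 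Since $(|x-x_\ast|^{-2s})^2$ is integrable near $x_\ast$ precisely when $s<\tfrac14$, this gives $(-\Delta)^s\varphi_i\in L^2(\R)$ under the stated hypothesis, while the logarithmic singularity is harmless so that $L_\Delta\varphi_i\in L^2(\R)$ unconditionally; in particular both right-hand sides are finite, and for a.e.\ $x$ the integrals defining $(-\Delta)^s\varphi_i(x)$ and $L_\Delta\varphi_i(x)$ are absolutely convergent (the principal value is needed only at the single point $x_\ast$).

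For the second point and the identities themselves: the inner integral $\int_\R\frac{|\varphi_i(x)-\varphi_i(y)|}{|x-y|^{1+2s}}\,dy$ is bounded for $x$ away from $x_\ast$ and is $O(|x-x_\ast|^{-2s})$ near it, hence locally integrable in $x$ because $2s<1$; as $\varphi_j$ is bounded with compact support, each of the two regrouped double integrals converges absolutely, so Fubini applies, and after performing $x\leftrightarrow y$ in the second one and using the symmetry of $|x-y|^{-1-2s}$ the two combine. Recognizing the resulting inner integral, up to the normalizing constant in the definition of $(-\Delta)^s$, as $(-\Delta)^s\varphi_i(x)$ for a.e.\ $x$, one obtains $\cE_s(\varphi_i,\varphi_j)=\int_\R(-\Delta)^s\varphi_i\,\varphi_j\,dx$. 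The identity for $\cE_L$ follows by the same scheme: for the singular part $\cE$ the kernel $|x-y|^{-1}\chi_{\{|x-y|<1\}}$ is even milder (the inner integrals are only logarithmically large near $x_\ast$), so the symmetrization and Fubini go through verbatim and produce the first term of $L_\Delta$; in $B$, the term $-\iint_{|x-y|\ge 1}\frac{\varphi_i(x)\varphi_j(y)}{|x-y|}\,dx\,dy$ is already absolutely convergent and becomes $-\int_\R\varphi_j(x)\big(\int_{\R\setminus B_1(x)}\frac{\varphi_i(y)}{|x-y|}\,dy\big)dx$ after one change of variables, while $\rho_1\int_\R\varphi_i\varphi_j\,dx$ is an honest $L^2$ pairing; adding the three contributions and comparing with \eqref{LL} yields $\cE_L(\varphi_i,\varphi_j)=\int_\R L_\Delta\varphi_i\,\varphi_j\,dx$.

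The only genuinely non-routine part is the local analysis near the jumps of $\varphi_0$ and $\varphi_{N+1}$, i.e.\ quantifying the (truncated) inner integrals near $x_\ast$ and checking they are locally integrable in $x$; this is what legitimizes the rearrangement and the a.e.\ pointwise identification, and it is where the hypothesis $s<\tfrac14$ enters. As an alternative for the $\cE_L$ identity, once the fractional one is established, one could instead pass to the limit $s\to0^+$: divide by $s$, subtract $\int_\R\varphi_i\varphi_j\,dx$, use \eqref{aux:1} and \Cref{phi0:conv} on the right-hand side, and compute the limit of $\tfrac1s\big(\cE_s(\varphi_i,\varphi_j)-\int_\R\varphi_i\varphi_j\,dx\big)$ directly from the Gagliardo integrals on the left; this avoids redoing the symmetrization at the price of the elementary $s\to0$ expansion of $\cE_s$.
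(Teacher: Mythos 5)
Your proof is correct and takes essentially the same route as the paper, whose entire proof is the single line that the result ``follows by a standard application of Fubini's theorem''; you simply supply the absolute-convergence checks (in particular the local analysis near the jumps of $\varphi_0$ and $\varphi_{N+1}$, where the inner integrals are $O(|x-x_\ast|^{-2s})$, resp.\ logarithmic, and hence locally integrable) that legitimize the symmetrization and the a.e.\ identification of the inner integral with $(-\Delta)^s\varphi_i$, resp.\ $L_\Delta\varphi_i$. Your reading of where $s<\tfrac{1}{4}$ enters (to put $(-\Delta)^s\varphi_0$ in $L^2$, while the pairing itself already converges for $s<\tfrac{1}{2}$) is consistent with the statement as given.
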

\begin{proof}The proof follows by a standard application of Fubini's theorem.

\end{proof}

\section{Computation of the stiffness matrix of the fractional Laplacian}\label{ns:sec}

In this section we construct the stiffness matrix associated to the fractional Laplacian $(-\Delta)^s$ with $s\in(0,\frac{1}{2})$. We follow the computations done in \cite{BH17}.  In particular, we want to calculate
\begin{align*}
a_{i,j}:=\frac{2}{c_{1,s}}\cE_s(\varphi_i,\varphi_j) \qquad \text{ for }i,j=0,\ldots, N+1,
\end{align*}
where $c_{1,s}=4^s\pi^{-1/2}s(1-s)\frac{\Gamma(1/2+s)}{\Gamma(2-s)}=\frac{\sin (\pi  s) \Gamma (2 s+1)}{\pi }$.

If $i,j\neq 0,N+1$, then these coefficients were obtained in \cite{BH17}, and are given by the following formulas.

If $i,j\in\inter{1,N}$ and $j-i\geq 2$, then
\begin{align*}
	a_{i,j} = - h^{1-2s}\,\frac{4(k+1)^{3-2s} + 4(k-1)^{3-2s}-6k^{3-2s}-(k+2)^{3-2s}-(k-2)^{3-2s}}{2s(1-2s)(1-s)(3-2s)};
\end{align*}
moreover, for $i\in\inter{1,N-1}$,
\begin{align*}
	a_{i,i+1} =	\displaystyle h^{1-2s}\frac{3^{3-2s}-2^{5-2s}+7}{2s(1-2s)(1-s)(3-2s)},
\end{align*}
and, for  $i\in\inter{1,N}$,
\begin{align*}
	a_{i,i} =
			\displaystyle h^{1-2s}\,\frac{2^{3-2s}-4}{s(1-2s)(1-s)(3-2s)}.
\end{align*}

Let $\mathcal A^s_h=\frac{c_{1,s}}{2}(a_{i,j})_{i,j=0}^{N+1}$. It remains to calculate $a_{0,i}$ for $i\in\inter{0,N+1}$.  By symmetry, we have that $a_{0,0}=a_{N+1,N+1}$, $a_{0,i}=a_{i,0}=a_{N+1-i,N+1}=a_{N+1,N+1-i}$ for $i\in\inter{1,N}$. Therefore, we only consider the following cases.

\subsubsection{Case 1: $a_{0,0}$.}

Note that
	\begin{align*}
	a_{0,0}= & \int_{\RR}\int_{\RR}\frac{(\varphi_0(x)-\varphi_0(y))^2}{|x-y|^{1+2s}}\,dxdy
	= 2\int_{h}^{+\infty}\int_{0}^{h} \ldots\,dxdy  + \int_{0}^{h}\int_{0}^{h} \ldots\,dxdy + 2\int_{-\infty}^{0}\int_{0}^{h} \ldots\,dxdy,
\end{align*}
where

\begin{align*}
	R_1 &:= 2\int_{h}^{+\infty}\int_{0}^{h} \frac{\varphi_0^2(x)}{|x-y|^{1+2s}}\,dxdy
	= 2\int_{0}^{h}\left(1-\frac{x}{h}\right)^2\left(\int_{h}^{+\infty} \frac{1}{(y-x)^{1+2s}}\, dy\right)\,dx= \frac{1}{s}\int_{0}^{h}\frac{\left(1-\frac{x}{h}\right)^2}{(h-x)^{2s}}\,dx
	=\frac{h^{1-2 s}}{s(3-2 s)}.
\end{align*}

\begin{align*}
	R_2 &
	:= \int_{0}^{h}\int_{0}^{h} \frac{(\varphi_0(x)-\varphi_0(y))^2}{|x-y|^{1+2s}}\,dxdy
	= h^{-2}\int_{0}^{h}\int_{0}^{h} |x-y|^{1-2s}\,dxdy
	=\frac{h^{1-2 s}}{(1-s) (3-2 s)}
\end{align*}

\begin{align*}
	R_3 &:= 2\int_{-\infty}^{0}\int_{0}^{h} \frac{\varphi_0^2(x)}{|x-y|^{1+2s}}\,dxdy = 2\int_{0}^{h}\varphi_0^2(x)\left(\int_{-\infty}^{0} \frac{dy}{|x-y|^{1+2s}}\right)\,dx = \frac{1}{s}\int_{0}^{h}\frac{\varphi_0^2(x)}{x^{2s}}\,dx\\
	&=\frac{h^{1-2 s}}{(1-s)(3-2 s) (1-2 s)s}.
\end{align*}
as long as $s<1/2$.

Therefore,
\begin{equation*}
    a_{0,0}=\frac{2h^{1 - 2 s}}{s (3 - 2 s) (1 - 2 s)}.
\end{equation*}

\subsubsection{Case 2: $a_{0,j}$ for $j\in\inter{2,N}$.}

\begin{align*}
	a_{0,j}&=-2 \int_{x_{j-1}}^{x_{j+1}}\int_{0}^{h}\frac{\varphi_0(x)\varphi_j(y)}{|x-y|^{1+2s}}\,dxdy
=-2 \int_{(j-1)h}^{(j+1)h}\int_0^{h}\frac{\left(1-\frac{x}{h}\right)\left(1-\frac{|y-jh|}{h}\right)}{|x-y|^{1+2s}}\,dxdy.
\end{align*}

Let us introduce the following change of variables:
\begin{align*}
	\frac{x}{h}=\hat{x},\;\;\; \frac{y-jh}{h}=\hat{y}.
\end{align*}

Then, rewriting (with some abuse of notations since there is no possibility of confusion) $\hat{x}=x$ and $\hat{y}=y$, we get
\begin{align*}
	a_{0,j}&=-2h^{1-2s} \int_{-1}^1\int_{0}^1\frac{(1-x)(1-|y|)}{(j+y-x)^{1+2s}}\,dxdy\\
	&=-2h^{1-2s} \left(\int_{0}^1\int_{0}^1\frac{(1-x)(1-y)}{(j+y-x)^{1+2s}}\,dxdy
	+\int_{-1}^0\int_{0}^1\frac{(1-x)(1+y)}{(j+y-x)^{1+2s}}\,dxdy\right)\\
	&=-2h^{1-2s}(B_1+B_3).
\end{align*}
Integrating by parts several times, we have
\begin{align*}
	& B_1 = \frac{1}{4s(1-2s)}\left[2j^{1-2s}-\frac{(j+1)^{2-2s}-(j-1)^{2-2s}}{1-s}-\frac{2j^{3-2s}-(j+1)^{3-2s}-(j-1)^{3-2s}}{(1-s)(3-2s)}\right],\\
	& B_3 = \frac{1}{4s(1-2s)}\left[-2j^{1-2s}+\frac{2j^{2-2s}-2(j-1)^{2-2s}}{1-s}+\frac{2(j-1)^{3-2s}-j^{3-2s}-(j-2)^{3-2s}}{(1-s)(3-2s)}\right].
\end{align*}

Therefore,
\begin{align*}
	a_{0,j} =-\frac{h^{1-2 s} \gamma_j^s }{2(1-2 s) (1-s) s (3- 2s )},
\end{align*}
where
\begin{align}\notag
\gamma_j^s= &-3 j^{3-2 s}+2 (3-2 s) j^{2-2 s}+(2 s-3)
   (j-1)^{2-2 s}+3 (j-1)^{3-2 s} \\ \label{eq:def_gamma_j}
   &-(j-2)^{3-2 s}+(j+1)^{3-2 s}+(2
   s-3) (j+1)^{2-2 s}.
\end{align}

\subsubsection{Case 3: $a_{0,N+1}$.} Since $\Omega=(0,L)$, we have that $x_{N+1}=L=(N+1)h=x_{N+2}$ and $x_{N-1}=L-h=h(N-1)$. Then

\begin{align*}
a_{0,N+1}&=-2 \int_{x_{N}}^{x_{N+1}}\int_{x_0}^{x_1}\frac{\varphi_0(x)\varphi_{N+1}(y)}{|x-y|^{1+2s}}\,dxdy
=-2\int_{Nh}^{(N+1)h}\int_0^{h}\frac{\left(1-\frac{x}{h}\right)\left(\frac{y}{h}-N\right)}{(y-x)^{1+2s}}\,dxdy\\
&=-2h^{1-2s}\int_{0}^{1}\int_0^{1}\frac{\left(1-x\right)y}{(y+N-x)^{1+2s}}\,dxdy,
\end{align*}
then
\begin{align*}
a_{0,N+1} = -\frac{h^{1-2 s} \zeta_N^s }{2s(1-s)(1-2s )(3-2 s)},
\end{align*}
where
\begin{align}\notag
\zeta_N^s:= (N-1)^{-2 s} N^{-2 s} (N+1)^{-2 s}
   &\left[(N+1)^{2 s} \left(2 N^2 (N-1)^{2 s} (N+2 s-3)-(N-1)^3
   N^{2 s}\right) \right. \\  & \label{eq:def_zeta_N} \;
   \left. -(N+1) (N-1)^{2 s} N^{2 s} \left(N^2+4 N (s-1)+4
   s^2-6 s+1\right)\right]
\end{align}

\subsubsection{Case 4: $a_{0,1}$.}

We have
	\begin{align*}
	a_{0,1}= & \int_{\RR}\int_{\RR}\frac{(\varphi_0(x)-\varphi_0(y))(\varphi_{1}(x)-\varphi_{1}(y))}{|x-y|^{1+2s}}\,dxdy
	\\
	= & \int_{h}^{+\infty}\int_{h}^{+\infty} \ldots\,dxdy + 2\int_{h}^{+\infty}\int_{0}^{h} \ldots\,dxdy + 2\int_{h}^{+\infty}\int_{-\infty}^{0} \ldots\,dxdy
	\\
	& + \int_{0}^{h}\int_{0}^{h} \ldots\,dxdy + 2\int_{0}^{h}\int_{-\infty}^{0} \ldots\,dxdy + \int_{-\infty}^{0}\int_{-\infty}^{0} \ldots\,dxdy
	\\
	:= & Q_1 + Q_2 + Q_3 + Q_4 + Q_5 + Q_6.
\end{align*}

Let us now compute the terms $Q_i$, $i=1,\ldots,6$, separately.   Note that $Q_6=Q_3=Q_1 = 0.$ Moreover, by Fubini's theorem,
\begin{align*}
	Q_2 &= 2\int_{h}^{+\infty}\int_{0}^{h} \frac{\varphi_0(x)(\varphi_{1}(x)-\varphi_{1}(y))}{|x-y|^{1+2s}}\,dxdy\\
	&= 2\int_{0}^{h}\varphi_0(x)\varphi_{1}(x)\left(\int_{h}^{+\infty} \frac{dy}{|x-y|^{1+2s}}\right)\,dx - 2\int_{h}^{2h}\int_{0}^{h} \frac{\varphi_0(x)\varphi_{1}(y)}{|x-y|^{1+2s}}\,dxdy
	\\
	&= \frac{1}{s}\int_{0}^{h}\frac{\varphi_0(x)\varphi_{1}(x)}{(h-x)^{2s}}\,dx - 2\int_{h}^{2h}\int_{0}^{h} \frac{\varphi_0(x)\varphi_{1}(y)}{|x-y|^{1+2s}}\,dxdy
	\\
	&= \frac{1}{s}\int_{0}^{h}\frac{\left(1-\frac{x}{h}\right)\left(1-\frac{h-x}{h}\right)}{(h-x)^{2s}}\,dx - 2\int_{h}^{2h}\int_{0}^{h} \frac{\left(1-\frac{x}{h}\right)\left(1-\frac{y-h}{h}\right)}{|x-y|^{1+2s}}\,dxdy=:Q_2^1+Q_2^2,
\end{align*}
where, using a change of variables,
\begin{align*}
Q_2^1&=\frac{h^{1-2s}}{s}\int_0^1 x^{1-2s}(1-x)\,dx
= \frac{h^{1-2s}}{2s(1-s)(3-2s)},\\
Q_2^2 &= -2h^{1-2s}\int_0^1\int_0^1\frac{(1-x)(1-y)}{(y-x+1)^{1+2s}}\,dxdy
= h^{1-2s}\frac{2^{2-2s}+2s-4}{2s(1-s)(3-2s)}.
\end{align*}
Adding the two contributions, we get the following expression for the term $Q_2$
\begin{align*}
	Q_2 = h^{1-2s}\frac{2^{2-2s}+2s-3}{2s(1-s)(3-2s)}.
\end{align*}
Finally,
\begin{align*}
	Q_4 &= \int_{0}^{h}\int_{0}^{h} \frac{(\varphi_0(x)-\varphi_0(y))(\varphi_{1}(x)-\varphi_{1}(y))}{|x-y|^{1+2s}}\,dxdy.
\end{align*}
Note that
\begin{align*}
	(\varphi_0(x)-\varphi_0(y))(\varphi_{1}(x)-\varphi_{1}(y))
	= \left(\frac{y-x}{h}\right)\left(\frac{x-y}{h}\right)
	= -\frac{|x-y|^2}{h^2},
\end{align*}
and the integral becomes
\begin{align*}
	Q_4 &= -\frac{1}{h^2}\int_{0}^{h}\int_{0}^{h} |x-y|^{1-2s}\,dxdy
	= -\frac{h^{1-2s}}{(1-s)(3-2s)}.
\end{align*}

By Fubini's theorem,
\begin{align*}
	Q_5 &= 2\int_{0}^{h}\varphi_0(y)\varphi_{1}(y)\left(\int_{-\infty}^{0} \frac{dx}{|x-y|^{1+2s}}\right)dy = \frac{1}{s}\int_{0}^{h}\frac{\varphi_0(y)\varphi_{1}(y)}{(y)^{2s}}\,dy \\
	&= \frac{1}{s}\int_{0}^{h}\frac{\left(1-\frac{y}{h}\right)\left(1-\frac{h-y}{h}\right)}{y^{2s}}\,dx=\frac{h^{1-2 s}}{2(1-s) s (3-2 s)}.
\end{align*}

Then,
\begin{align*}
a_{0,1}=Q_2+Q_4+Q_5=\frac{\left(2^{2-2 s}-2\right) h^{1-2 s}}{2(3-2 s) (1-s) s}.
\end{align*}

\subsubsection{Conclusion}

The stiffness matrix $\mathcal A_h^s = (b_{i,j}^s)_{i,j=0}^{N+1}\in\mathbb R^{(N+2)\times (N+2)}$ has components
\begin{equation}\label{eq:def_aijs_s}
b_{i,j}^s=\frac{c_{1,s}}{2}\int_{\R}\int_{\R}\frac{(\phi_i(x)-\phi_i(y)(\phi_j(x)-\phi_j(y))}{|x-y|^{1+2s}}dx dy, \quad i,j\in\inter{0,1},
\end{equation}
Recall that $\frac{c_{1,s}}{2}=\frac{\sin (\pi  s) \Gamma (2 s+1)}{2\pi}$.

In view of the symmetry of \eqref{eq:def_aijs_s} and using the basis $(\varphi_i)_{i\in\inter{0,N+1}}$ of shape functions \eqref{eq:def_basis}--\eqref{eq:def_basis_ext}, the coefficients $a_{i,j}=\frac{2}{c_{1,s}}\mathcal E_s(\varphi_i,\varphi_j)$ with $i,j\in\inter{0,N+1}$ and $j\geq i$ are given by
\begin{equation}\label{stiff:frac}
a_{i,j}= \frac{h^{1 - 2 s}}{2s(1-2s)(1-s)(3-2s)}
\begin{cases}
4(1-s) & \textnormal{$i=0$, $j=0$,} \\
\left(2^{2-2 s}-2\right)(1-2s) & \textnormal{$i=0$, $j=1$,} \\
-\gamma_j^s  & \parbox[t]{.3\columnwidth}{$i=0$, $j\in\inter{2,N}$, where $\gamma_j^s$ is defined in \eqref{eq:def_gamma_j},}\\
-\zeta^s_{N}  & \parbox[t]{.3\columnwidth}{$i=0$, $j=N+1$, where $\zeta_N^s$ is defined in \eqref{eq:def_zeta_N},}\\
-\xi_k^s & \textnormal{ $i,j\in\inter{1,N}$ with $k=j-i\geq 2$,} \\
3^{3-2s}-2^{5-2s}+7 & \textnormal{$i\in\inter{1,N-1}$ and $j=i+1$,} \\
4^{2-2s}-8 & \textnormal{$i\in\inter{1,N}$ and $j=i$,}
\end{cases}
\end{equation}
where $k:=j-i$ and $\xi^s_k:=4(k+1)^{3-2s} + 4(k-1)^{3-2s}-6k^{3-2s}-(k+2)^{3-2s}-(k-2)^{3-2s}$.

\section{Convergence rates for fractional problems}

The function
\begin{align*}
 U(x)=\frac{(1-x^2)^s}{\Gamma(1+2s)}\chi_{(-1,1)}
\end{align*}
is the torsion function for the fractional Laplacian, namely, $(-\Delta)^s U(x)=1$ for $x\in (-1,1)$ with $s=0.1$.  Using the stiffness matrix given in \eqref{stiff:frac}, we obtain the following table for comparison purposes (cf. Table~\ref{table:1}).  We mention that these rates are optimal, as shown in \cite{Bor17} (see also \cite[Section 5]{AB17} and \cite[Section 5]{BHS19}).

{\color{white}.}
\begin{table}[htb]
\centering
\pgfplotstableread{sol-log_convergence_data_20-06-2023_16h35.org}\data
\pgfplotstableset{
columns={N,h,L2norm,slopeL2,Hsnorm,slopeHs},
columns/N/.style={
column name=$N$,
},
columns/h/.style={
column name=$h$,
sci,sci zerofill,sci subscript,
precision=2,
column type/.add={}{|}},
columns/L2norm/.style={
column name=$L^2$--norm,
sci,sci zerofill,sci subscript,
precision=3},
columns/slopeL2/.style={
column name=slope,
precision=2,
column type/.add={}{|}},
columns/Hsnorm/.style={
column name=$H^s$-norm,
sci,sci zerofill,sci subscript,
precision=3},
columns/slopeHs/.style={
column name=slope,
precision=2,
column type/.add={}{}},
}
\pgfplotstabletypeset[clear infinite, empty cells with={\ensuremath{-}},
every head row/.style={
before row=\toprule,after row=\midrule},
every last row/.style={
after row=\bottomrule},
]{\data}
\caption{Convergence data fractional Laplacian.}
\label{table:frac_lap}
\end{table}

\paragraph*{Acknowledgments.} We are grateful to the anonymous referees for their insightful comments, corrections, and valuable suggestions, which significantly enhanced the quality of our paper.

 \bibliographystyle{plain}
 \bibliography{biblio}

\bigskip
\begin{flushleft}
\textbf{Víctor Hernández-Santamaría and Alberto Saldaña}\\
Instituto de Matemáticas\\
Universidad Nacional Autónoma de México\\
Circuito Exterior, Ciudad Universitaria\\
04510 Coyoacán, Ciudad de México, Mexico\\
E-mails: \texttt{victor.santamaria@im.unam.mx, alberto.saldana@im.unam.mx} 
\vspace{.3cm}
\end{flushleft}
\begin{flushleft}
\textbf{Sven Jarohs and Leonard Sinsch}\\
Institut f\"ur Mathematik\\
Goethe-Universit\"at Frankfurt\\
Robert-Mayer-Str. 10\\
D-60629 Frankfurt am Main, Germany  \\
E-mails: \texttt{jarohs@math.uni-frankfurt.de, leo\_s1996@web.de} 
\vspace{.3cm}
\end{flushleft}

\end{document}